\newtheorem{thm}{Theorem}[section]
\newtheorem{cor}[thm]{Corollary}
\newtheorem{lem}[thm]{Lemma}
\newtheorem{prop}[thm]{Proposition}
\newtheorem{claim}[thm]{Claim}
\theoremstyle{definition}
\newtheorem{rem}[thm]{Remark}
\newtheorem{ex}[thm]{Example}
\newtheorem{defn}[thm]{Definition}
\newcommand{\mc}{\mathcal}
\newcommand{\mb}{\mathbf}
\newcommand{\Z}{\mathbb{Z}}
\newcommand{\R}{\mathbb{R}}
\newcommand{\C}{\mathbb{C}}
\newcommand{\Q}{\mathbb{Q}}
\newcommand{\ov}{\overline}
\newcommand{\mf}{\mathfrak}
\newcommand{\N}{\mathbb{N}}
\newcommand{\Vect}{\mathrm{Vect}}
\newcommand{\id}{\mathrm{id}}
\newcommand{\Res}{\mathrm{Res}}
\newcommand{\RigIsoc}{\mathrm{RigIsoc}}
\newcommand{\GL}{\mathrm{GL}}
\DeclareMathOperator{\Kal}{Kal}
\DeclareMathOperator{\Ob}{obj}
\DeclareMathOperator{\Int}{Int}
\DeclareMathOperator{\bas}{bas}
\DeclareMathOperator{\msc}{sc}
\DeclareMathOperator{\tor}{tor}
\DeclareMathOperator{\Hom}{Hom}
\DeclareMathOperator{\ad}{ad}
\DeclareMathOperator{\WD}{WD}
\DeclareMathOperator{\Rep}{Rep}
\DeclareMathOperator{\Isoc}{Isoc}
\DeclareMathOperator{\SL}{SL}
\DeclareMathOperator{\Irr}{Irr}
\DeclareMathOperator{\iso}{iso}
\DeclareMathOperator{\der}{der}
\newcommand{\wh}{\widehat}
\newcommand{\D}{\mathbb{D}}
\newcommand{\co}[1]{\prescript{#1}{}}
\newcommand{\Gm}{\mathbb{G}_m}
\title{A Tannakian description of the local Kaletha gerbe}
\date{January 2026}
\author{Alexander Bertoloni Meli}
\address{Department of Mathematics and Statistics, Boston University, 665 Commonwealth Ave, Boston, MA
02215, USA.}
\author{Peter Dillery}
  \address{Mathematics Institute, University of Bonn, Endenicher Allee 60, 53115 Bonn, Germany}
\begin{document}
\maketitle

\begin{abstract}
    We construct, for a $p$-adic field $F$, an explicit semisimple Tannakian category $\RigIsoc_{F}$ whose category of fiber functors recovers Kaletha's Galois gerbe $\mc{E}_{\Kal}$. We then classify and write down the simple objects in $\RigIsoc_{F}$, all of which come from elliptic twisted Levi subgroups of $\GL_{n}$.
\end{abstract}

\section{Introduction}

\subsection{Motivation and overview}
The refined local Langlands correspondence for a $p$-adic field $F$ posits an injection from irreducible smooth representations of $G(F)$, for $G$ reductive and quasi-split, to enhanced $L$-parameters $(\phi, \rho)$ where $\phi: \WD_{F} \rightarrow \co{L}{G}$ (here $\WD_{F} = W_{F} \times \mathrm{SL}_{2}(\C)$ is the $\SL_2$ form of the Weil--Deligne group) and $\rho$ is an algebraic irreducible representation of the centralizer group $S_{\phi}$ of the image of $\phi$. The image of the refined correspondence map should be contained in the set of pairs $(\phi, \rho)$ such that $\rho \in \Irr(\pi_0(S_{\phi}/Z(\wh{G})^{\Gamma_F}))$. Vogan (\cite{VoganLLC}) proposed that in order to upgrade the correspondence to contain $\Irr(\pi_0(S_{\phi})) \subset \Irr(S_{\phi})$ in its image, one should enlarge the domain by adding in irreducible representations of inner forms of $G$. In \cite{BMO}, Oi and the first named author showed that if a refined local Langlands correspondence exists for $G$ and its Levi subgroups, one can upgrade it to a bijection with the entire target $\Irr(S_{\phi})$ if one further allows irreducible representations of Levi subgroups of $G$ on the automorphic side.  The question of exactly how to parametrize inner forms of $G$ (and possibly its Levi subgroups) in this context is a subtle one because the cohomology group $H^1(F, G_{\ad})$ is not sufficiently rigid, in the sense that translating by $G_{\ad}$-coboundaries identifies non-isomorphic representations of $G(F)$. However, there are many options for sufficiently rigid cohomology sets including the `pure inner forms' given by $H^1(F, G)$, the `extended pure inner forms' classified by the Kottwitz set $B(G)_{\bas}$, and finally `rigid inner forms' defined by Kaletha in \cite{Kalannals} and corresponding to basic classes $H_{\bas}^1(\mc{E}_{\Kal}, G)$ of the rigid gerbe denoted $\mc{E}_{\Kal}$ in this paper. This last set has the advantage that it surjects onto $H^1(F, G_{\ad})$, whereas the others do not in general. For a nice discussion of this topic, we recommend the survey article \cite{KalSurvey}.

One advantage that the Kottwitz set $B(G)_{\bas}$ (or more generally $B(G)$) has is that there is an explicit Tannakian category, namely that of $F$-isocrystals, $\Isoc_F$, whose objects are in bijection with $\coprod_n B(\GL_n)$ for $n \in \N$. The category of $F$-isocrystals appears naturally in Dieudonn\'{e} theory and hence has connections to local Shimura varieties. By the Tannakian formalism, we can thus view the isocrystal gerbe $\mc{E}_{\text{iso}}$ (cf. \cite{LR} where this is called the Dieudonn\'{e} gerbe), whose algebraic cohomology is $B(G)$, as the Galois gerbe obtained from the gerbe of fiber functors of $\Isoc_{F}$ and, in the other direction, identify the tensor category $\Rep(\mc{E}_{\text{iso}})$ with $\Isoc_{F}$ (see \cite[\S 7.1]{Taibi25} for a summary of this formalism). 

The goal of the present work is to construct a Tannakian category of ``Rigid Isocrystals'' $\RigIsoc_{F}$ which can be canonically identified with $\Rep(\mc{E}_{\Kal})$ and whose objects are classified by $\coprod_n H^1(\mc{E}_{\Kal}, \GL_n)$ for $n \in \N$. Considered as a Galois gerbe, $\mc{E}_{\Kal}$ is an extension of $\Gamma_{\ov{F}/F}$ by $u(\ov{F})$, where $u$ is the pro-multiplicative group $\varprojlim_{E/F, n} \Res_{E/F} (\mu_n)$, where the limit is takes over all finite Galois extensions $E/F$ and all natural numbers $n$. We alert the reader that this group was originally defined in \cite{Kalannals} with $u'$ in place of $u$, where
\begin{equation*}
u' = \varprojlim_{E/F,n} \frac{\Res_{E/F} (\mu_n)}{\mu_{n}},
\end{equation*}
but one can use $u$ as well because the natural map $u \to u'$ is an isomorphism (see Remark \ref{rem: Kalbbandvariant}). Kaletha works with the $u'$-presentation of this group because it gives the correct object over $\R$ as well. Since we only consider $p$-adic fields, it suffices to consider the $u$-presentation.

The point of our article is to explain that since the gerbe $\mc{E}_{\Kal}$ can be constructed from the isocrystal gerbe with band $\Gm$ by a relatively simple sequence of operations, the same is true at the level of Tannakian categories. Let us describe the operations we need to consider. Beginning with the isocrystal gerbe with band $\Gm$, we pass to the subgroup $\mu_n$. At the level of Tannakian categories, this is a quotient operation and is realized by passing to a certain module category. After this, we take products to get a gerbe with band $\prod\limits_{\Gamma_{E/F}} \mu_{n,E}$ and then descend this gerbe to produce one with band $\Res_{E/F} (\mu_n)$. Finally, we take limits over $E, n$. By understanding just these four operations, products, modules, limits, and descent, we can produce a Tannakian category for the gerbe $\mc{E}_{\Kal}$.

\begin{thm}
    There is a Tannakian category $\RigIsoc_{F}$ whose associated gerbe of fiber functors represents $[\mc{E}_{\Kal}]$ \footnote{Of course, one can always take the Tannakian category $\Rep(\mc{E}_{\Kal})$ and recover $\mc{E}_{\Kal}$ as the associated gerbe, but the point of this result is to give a more explicit Tannakian category.} and is described as follows. Its objects consists of tuples $(V, \tilde{\Phi}, n, E)$ where $n \in \N$, and $E/F$ is a finite Galois extension. The other components of this tuple are given as follows:
    \begin{itemize}
    \item{$(V,\mc{D})$ is a vector space over $L_{E}$ ,the completion of the maximal unramified extension of $E$, equipped a $\prod\limits_{\Gamma_{E/F}} \frac{1}{n}\Z$-grading $\mc{D}$ and an action of the ring $R_{E/F} := L_{E}[\prod \limits_{\Gamma_{E/F}} \Z]$ which is additive on the gradings and makes $V$ a finitely-generated $R_{E/F}$-module;}
    \item{$\tilde{\Phi}$ is a semilinear action of $W_{L_E/F}$ on $V$ that acts in the natural way on the grading $\mc{D}$ and is compatible with the action of $W_{L_{E}/F}$ on $R_{E/F}$ (cf. \S \ref{sec:Descent} for the description of this action) ;}
    \item{
    There is an $L_E$-basis of the $(a_{\sigma})_{\sigma}$-graded piece of $V$ such that an $n$th power $\mf{f}^n_E$ of a choice $\mf{f}_{E}$ of the Frobenius of $E$ in $W_{L_{E}/F}$  acts by $\prod\limits_{\sigma \in \Gamma_{E/F}} \sigma(\varpi_E)^{a_{\sigma}}$, for $\varpi_E$ a fixed choice of anti-uniformizer.}
    \end{itemize}
    Morphisms betweem two such objects in $\RigIsoc_{F}$ are $W_{L_{E}/F}$-equivariant graded morphisms of $R_{E/F}$-modules.
\end{thm}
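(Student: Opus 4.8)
The strategy is to construct $\RigIsoc_{F}$ by applying to the category $\Isoc_{F}$ of $F$-isocrystals the four operations---restriction to a finite subgroup, product, descent, and inverse limit---dual to those by which $\mc{E}_{\Kal}$ is built from the isocrystal gerbe with band $\Gm$, and then to read off the resulting objects and morphisms. Throughout one uses the equivalence between gerbes over $F$ banded by (pro-)diagonalizable groups and the Tannakian categories they represent, so it suffices to match, step by step, an operation on gerbes with an operation on Tannakian categories and, at the end, to check that the class of the gerbe one has produced is Kaletha's class and not merely some other gerbe with band $u$.

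First I would recall the Tannakian description of $\Rep$ of the isocrystal gerbe with band $\Gm$: by Dieudonn\'e--Manin this is equivalent to the category of finite-dimensional $\Z$-graded vector spaces over $L$ (the completion of $F^{\mathrm{ur}}$), the grading being the slope grading, equipped with a $W_{L/F}$-semilinear operator whose Frobenius acts on the degree-$m$ piece by $\varpi^{m}$ in a suitable basis. Passing from the band $\Gm$ to the finite subgroup $\mu_{n}$ is, on the Tannakian side, the de-equivariantization along the commutative algebra object $\mc{O}(\Gm/\mu_{n}) = \bigoplus_{j \in \Z} \mathbf{1}(nj)$---the ``module category'' construction the introduction refers to; concretely this replaces a $\Z/n$-grading by a $\tfrac{1}{n}\Z$-grading together with a compatible module structure over the group algebra $L[\Z]$, a generator of $\Z$ acting by a degree shift. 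Taking the $\Gamma_{E/F}$-fold product of bands corresponds to the $\Gamma_{E/F}$-fold Deligne tensor product of these categories (over the relevant coefficient ring), producing the $\prod_{\Gamma_{E/F}} \tfrac{1}{n}\Z$-grading and the module structure over $R_{E/F} = L_{E}[\prod_{\Gamma_{E/F}} \Z]$, with the coefficients base changed from $L$ to $L_{E}$. Since $\Res_{E/F}(\mu_{n})$ becomes $\prod_{\Gamma_{E/F}} \mu_{n}$ after base change to $E$ (hence to $L_{E}$), the descent step amounts to equipping objects with a semilinear $W_{L_{E}/F}$-action $\tilde{\Phi}$ permuting the $\Gamma_{E/F}$-indexed factors in the natural way and compatible with the Galois action on $R_{E/F}$ and on the grading; tracking the Frobenius normalization through the previous steps yields exactly the third bullet, namely that $\mf{f}_{E}^{n}$ acts on the $(a_{\sigma})_{\sigma}$-graded piece by $\prod_{\sigma} \sigma(\varpi_{E})^{a_{\sigma}}$ in a suitable $L_{E}$-basis. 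Finally, the inverse limit over $(E,n)$ becomes the filtered $2$-colimit of the categories so produced, along the evident transition functors (refining the grading when $n \mid n'$ and inducing up the index set when $E \subseteq E'$); the morphisms in each such category, and hence in the colimit, are by construction precisely the $W_{L_{E}/F}$-equivariant graded morphisms of $R_{E/F}$-modules, as claimed.

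It remains to verify that the gerbe of fiber functors of $\RigIsoc_{F}$, which by the above has band $u$, represents Kaletha's class. For this I would write down an explicit $2$-cocycle for the isocrystal gerbe with band $\Gm$ (sending a Frobenius to a uniformizer), push it successively through the subgroup, product, and descent operations, and compare the resulting class in $H^{2}(\Gamma_{\ov{F}/F}, u(\ov{F}))$ with Kaletha's canonical class; the isomorphism $u \xrightarrow{\sim} u'$ of Remark~\ref{rem: Kalbbandvariant} lets one match this with Kaletha's original $u'$-presentation. Semisimplicity of $\RigIsoc_{F}$ is inherited along the way: $\Isoc_{F}$ is semisimple, and de-equivariantization, Deligne products, descent along a finite (Weil) group, and filtered colimits all preserve semisimplicity of the categories in question.

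The step I expect to be the main obstacle is the descent-and-cocycle bookkeeping: one must fix the anti-uniformizers $\varpi_{E}$ and Frobenius lifts $\mf{f}_{E}$ compatibly throughout the tower, check that the $W_{L_{E}/F}$-action on $R_{E/F}$ together with the Frobenius normalization assembles into an honest $2$-cocycle lying in Kaletha's class (rather than a cohomologous but differently normalized one), and confirm that these choices are compatible with the transition functors so that the $2$-colimit is well defined and computes $\Rep(\mc{E}_{\Kal})$. A secondary point is to check that this $2$-colimit of Tannakian categories is genuinely represented by a single pro-algebraic gerbe---that the limit is tame enough---which is where the pro-structure of $u = \varprojlim_{E,n} \Res_{E/F}(\mu_{n})$ enters.
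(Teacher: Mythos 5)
Your proposal is correct and takes essentially the same approach as the paper: construct the Tannakian category step by step through operations mirroring the four gerbe-theoretic operations (pass to a finite subgroup via module categories, take a product, Galois descend, take a limit), keeping track of the band and the cohomology class at each stage, and then check the resulting $2$-cocycle against Kaletha's class. One cosmetic difference worth noting is the order of operations: you follow the introduction's description (``$\Gm \to \mu_n$, then product, then descent''), whereas the body of the paper actually builds the $\Gamma_{E/F}$-fold product category $P_{E/F,s}$ with band $\prod_{\Gamma_{E/F}}\Gm$ first and only then passes to $\prod_{\Gamma_{E/F}}\mu_s$ via $R_{E/F}$-modules before descending; this is a harmless reordering of commuting operations but it means the quotient functor is realized by modules over $R_{E/F}=L_E[\prod_{\Gamma_{E/F}}\Z]$ in one go rather than by a Deligne tensor power of $L[\Z]$-module categories. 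You have correctly isolated the two places where the real work lies: verifying that the $W_{L_E/F}$-action on $R_{E/F}$ and the chosen $(\varpi_E,\mf{f}_E)$ assemble into a genuine descent datum with the right cocycle (this is the bulk of \S\ref{sec:Descent} and the subsequent transition-map section), and verifying compatibility of all the choices across the tower so that the colimit computes the class $-1\in\widehat{\Z}\cong H^2(\Gamma_{\ov F/F},u(\ov F))$.
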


In the final section, we discuss the structure of the Tannakian category $\RigIsoc_{F}$, which is semisimple, and give a number of examples of its simple objects. We point out that the situation differs fairly dramatically from that of $\Isoc_F$ concerning the role of basic objects---in $\Isoc_{F}$ (resp. $\RigIsoc_{F}$) these are the objects whose graded pieces are concentrated in one $\Q$- (resp. $\prod_{\Gamma_{E/F}} \Q/\Z$-) degree and are important because they give inner forms of $\GL_{n}$. For $\Isoc_{F}$, the simple objects are all basic and are classified according  to the Dieudonn\'{e}--Manin classification by a rational slope $r/s \in \Q$ written in reduced form. A basic isocrystal is simple if $s$ equals the dimension of the underlying $L$-vector space. 

In our situation, the simple objects of $\Rep(\mc{E}_{\Kal})$ need not be basic, but instead come from certain basic classes of $H^1(\mc{E}_{\Kal}, M)$ for $M$ an elliptic twisted Levi subgroup of $\GL_n$. Recall that twisted Levi subgroups are $F$-rational subgroups that become Levi subgroups upon base-change and are precisely the centralizers of $F$-rational tori. A twisted Levi subgroup is \emph{elliptic} if the split rank of its center equals that of $G$, and the elliptic twisted Levi subgroups of $\GL_n$ are all isomorphic to $\Res_{E/F}\GL_s$ with $s[E \colon F] =n$. The set of basic classes of such a twisted Levi are in bijection with tuples $(a_i) \in (\Q/\Z)^{\Hom_{F}(E,\ov{F})}$ such that $\sum\limits_i a_i \in \frac{1}{s} \Z / \Z$. In this paper we explain how to explicitly construct the simple object in $\RigIsoc_{F}$ corresponding to each such $E$ and $(a_{i})$. More precisely, we prove:

\begin{thm} 
Every isomorphism class of simple objects in $\RigIsoc_{F}$ may be represented by a pair $(E, (a_{i}))$ where $E/F$ is a finite separable field extension and the tuple $(a_{i})$ is an element of $(\Q/\Z)^{\Hom_{F}(E,\ov{F})}$ which is not inflated from a tuple in $(\Q/\Z)^{\Hom_{F}(K,\ov{F})}$ for any proper subextension $F/K/E$. For a fixed collection of finite extensions $\{E\}$ which have the same Galois closure $\widetilde{E}$ and are an orbit under $\Gamma_{\widetilde{E}/F}$, the group $\Gamma_{\widetilde{E}/F}$ acts on all such pairs involving $E' \in \{E\}$, and two arbitrary pairs give the same isomorphism class if and only if the extensions $E_{1}, E_{2}$ are related in this manner and the pairs lie in the same $\Gamma_{\widetilde{E}/F}$-orbit.
 \end{thm}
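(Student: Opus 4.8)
The plan is to transport the question through the equivalence $\RigIsoc_F \simeq \Rep(\mc{E}_{\Kal})$ and the explicit presentation above, and to exploit semisimplicity to reduce everything to bookkeeping with basic cohomology classes of elliptic twisted Levi subgroups of $\GL_n$. \emph{Step 1: every simple object comes from a basic class of an elliptic twisted Levi.} To an object I attach its \emph{slope datum}, obtained by reducing the grading of the explicit presentation modulo integral degrees. As in Dieudonn\'{e}--Manin theory, semisimplicity yields a functorial decomposition of any object according to the $W$-orbits of its slopes, so a simple object has its graded pieces concentrated on a single orbit, i.e.\ is basic in the sense of the introduction. By the structure result recalled above it therefore arises as the image of a basic class $b\in H^1_{\bas}(\mc{E}_{\Kal},M)$ for a twisted Levi $M\le\GL_n$, and there is a canonical minimal such $M$, namely the centralizer of the slope datum (the analogue of the Kottwitz--Newton decomposition). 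Were this $M$ non-elliptic, $M\cong\prod_j\Res_{E_j/F}\GL_{s_j}$ with two or more factors, $b$ would split as a direct sum over the factors and the object would be decomposable; hence $M\cong\Res_{E/F}\GL_s$ with $s[E:F]=n$. The recalled parametrization then records $b$ by a tuple $(a_i)\in(\Q/\Z)^{\Hom_F(E,\ov F)}$ with $\sum_i a_i\in\tfrac1s\Z/\Z$, so the simple object is represented by $(E,(a_i))$.

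\emph{Step 2: the representative is non-inflated and $s$ is forced.} Because $M$ is the centralizer of the slope datum, $(a_i)$ is automatically not inflated from any intermediate $F\subsetneq K\subsetneq E$: such an inflation would make the blocks of the slope datum indexed by embeddings with equal restriction to $K$ coincide, so the slope would already be central in the larger twisted Levi $\Res_{K/F}\GL_{s[E:K]}$ and $b$ would factor through it. Moreover $s=\mathrm{ord}(\sum_i a_i)$: the endomorphism algebra of the object is the inner form of $M=\Res_{E/F}\GL_s$ attached to $b$, of the shape $\Res_{E/F}(A^\times)$ for a central simple $E$-algebra $A$ of degree $s$ with class $\sum_i a_i\in\mathrm{Br}(E)\cong\Q/\Z$; in a semisimple category an object is simple exactly when its endomorphism algebra is a division algebra, i.e.\ exactly when $A$ is division, i.e.\ when $\sum_i a_i$ has exact order $s$. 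Conversely, for any non-inflated pair $(E,(a_i))$ the class $b$ it names, pushed forward from $\Res_{E/F}\GL_{\mathrm{ord}(\sum_i a_i)}$, is simple by the same computation; this is the explicit construction alluded to before the statement, and it provides the sought-after non-inflated representative.

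\emph{Step 3: the equivalence relation.} If two non-inflated pairs $(E_1,(a_i))$ and $(E_2,(b_j))$ give isomorphic objects, then they have conjugate slope data, hence $\GL_n(F)$-conjugate centralizers, hence $\Res_{E_1/F}\GL_{s_1}\cong\Res_{E_2/F}\GL_{s_2}$ over $F$; comparing centers and degrees forces $s_1=s_2$ and an $F$-isomorphism $E_1\cong E_2$. After fixing embeddings into a separable closure, $E_1$ and $E_2$ are thus Galois-conjugate subfields of their common Galois closure $\widetilde E$, i.e.\ they form a single $\Gamma_{\widetilde E/F}$-orbit of subfields. With the twisted Levi matched, the fibre of $H^1_{\bas}(\mc{E}_{\Kal},M)\to H^1(\mc{E}_{\Kal},\GL_n)$ over a class is an orbit of the relative Weyl group $N_{\GL_n}(M)/M$ acting on $H^1_{\bas}(\mc{E}_{\Kal},M)$; over $\ov F$ this group is the symmetric group permuting the $[E:F]$ blocks of $M$, on which $\Gamma_F$ acts --- as it does on $\Hom_F(E,\ov F)$ --- through $\Gamma_{\widetilde E/F}$. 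Translating between the Galois level $\widetilde E$, where the slope datum lives most naturally, and the minimal field $E$, and combining this block-permutation redundancy with the freedom to replace $E$ by a $\Gamma_{\widetilde E/F}$-conjugate, one recovers precisely the action of $\Gamma_{\widetilde E/F}$ on $\{(E',(a'_i)):E'\in\{E\}\}$ of the statement, along which the pushforward is constant. The converse --- that distinct $\Gamma_{\widetilde E/F}$-orbits yield non-isomorphic objects --- I would obtain by exhibiting the slope datum together with the basic reduction, read off the explicit model, as a complete isomorphism invariant, equivalently by checking that $H^1(\mc{E}_{\Kal},\GL_n)$ admits no further identifications.

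I expect the genuine difficulty to lie in Step 3: pinning the redundancy group down to exactly $\Gamma_{\widetilde E/F}$, neither coarser nor finer. The subtle point is that when $E/F$ is not Galois the descent data built into objects of $\RigIsoc_F$ absorbs strictly more relabellings of the slope components than the naive automorphism group $\Aut_F(E)$ does, so that the correct redundancy is the full $\Gamma_{\widetilde E/F}$-action on the multi-field data; verifying this, together with the matching injectivity, should require a concrete analysis in the explicit model of the first theorem --- writing down the relevant isomorphisms between candidate objects and a separating invariant.
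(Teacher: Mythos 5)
Your overall architecture (semisimplicity $\Rightarrow$ reduce to basic classes of elliptic twisted Levi subgroups, then parametrize them, then identify the redundancy) matches the paper's. But there are real gaps and one genuine error.

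\textbf{Error in Step 1.} You say a simple object has graded pieces concentrated on a single $W$-orbit, and then assert this means it ``is basic in the sense of the introduction.'' These are different conditions: the paper's Proposition~3.3 characterizes a basic object as one whose graded indices all have the \emph{same} image in $\Q/\Z[\Gamma_{\ov{F}/F}]$ (a single coset), not a single Weyl-orbit. The introduction explicitly warns that simple objects of $\RigIsoc_F$ ``need not be basic''--- they are only basic \emph{inside} the twisted Levi $M$. Conflating these notions muddles the argument, even though you eventually land on the correct statement that $z_X$ gives a class in $H^1_{\bas}(\mc{E}_{\Kal}, M)$.

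\textbf{Gap in Step 1: $F$-rationality of $\nu_X$.} You assert ``there is a canonical minimal such $M$, namely the centralizer of the slope datum'' as if the Newton point were automatically $\Gamma_{\ov{F}/F}$-fixed. It is not: after passing to the cocycle presentation one only knows $\nu_X \in \Hom_{\ov{F}}(u_{\ov{F}}, \GL_{n,\ov{F}})$, and one must prove one can twist by a coboundary so that $\nu_X$ is defined over $F$. This is the paper's Lemma~3.9, and the proof is not formal: it uses the surjectivity of $H^1(F, N_G(M)) \to H^1(F, N_G(M)/M)$ (for $\GL_n$, from the $\Gamma$-equivariant splitting by permutation matrices, then twisting), combined with $H^1(F,\GL_n)=1$. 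Your sketch never addresses this and could not as written, since the slope datum does not come with a canonical $F$-rational centralizer.

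\textbf{Step 2 is a plausible alternative, but unverified.} The paper shows $X$ is simple iff $(\sum a_i, s) = 1$ by constructing an explicit decomposition when the gcd $c>1$ (factoring $z_X$ through $\Res_{E/F}(\GL_{s/c})^c$). Your route via $\End(X)$ being a central simple $E$-algebra of class $\sum a_i \in \mathrm{Br}(E)$ and invoking Schur is a reasonable alternative, but you have not verified that the Brauer class of $\End(X)$ equals $\sum a_i$; this requires, in effect, redoing the explicit Newton-map computation the paper carries out to establish the formula $(a_i,b_i) \mapsto b_i/s - a_i/s$.

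\textbf{Step 3 is incomplete, as you acknowledge.} The paper's version turns on two inputs you do not engage with: Remark~3.21 (showing stably conjugate twisted Levi subgroups $\Res_{E/F}(\GL_s)$ and $\Res_{E'/F}(\GL_{s'})$ of $\GL_n$ force $s=s'$ and $E\cong E'$ as extensions of $F$, via Grothendieck's Galois theory applied to cohomologous cocycles in $H^1(F, W_0)$), and the reference to \cite[Proposition 3.9]{DS24} for the fact that two classes in $H^1_{\bas}(\mc{E}_{\Kal},M)$ have the same image in $H^1(\mc{E}_{\Kal},\GL_n)$ iff they are $\Gamma_{\widetilde{E}/F}$-conjugate. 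Your intuition about ``block-permutation redundancy'' acting through $\Gamma_{\widetilde{E}/F}$ is on the right track, but without these two results the equivalence relation is not pinned down, which is precisely where you locate the difficulty yourself.
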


In the case of $\mc{E}_{\iso}$, the explicit linear-algebraic description of $\Rep(\mc{E}_{\text{iso}})$ has many uses, such as in the definition of crystalline cohomology, Dieudonn\'{e} theory for $p$-divisible groups, and in the definition of the stack $\Isoc_G$ in Zhu's (\cite{Zhu25}) geometrization of the local Langlands correspondence. It is reasonable to ask analogous questions of $\RigIsoc_F$; namely if there is a natural cohomology theory valued in this Tannakian category and if this category leads to a more refined geometrization of the local Langlands correspondence. We do not take up these questions here, but we do relate $\RigIsoc_F$ to Fargues' category $\Isoc_{F}^e$ of extended isocrystals \cite{Fargues} and remark that steps towards a refined geomerization are taken in that paper.

\subsection{Preliminary notation}
We fix the following notation
\begin{itemize}
    \item We fix $F$ a $p$-adic local field.
    \item Let $L$ be the completion of the maximal unramified extension of $F$ and fix an algebraic closure $\ov{L}$. 
    \item Let $\ov{F}$ be the algebraic closure of $F$ in $\ov{L}$.
    \item We will consider finite Galois extensions $E/F$ and denote by $L_E \subset \ov{L}$ the completion of the maximal unramified extension of $E$.
    \item Denote by $W_{L_E/E}$ the subgroup of $\Gamma_{L_E/E}$ generated by integral powers of a fixed choice of Frobenius morphism which we denote by $\mf{f}_E$.
    \item For each $E$, fix $\varpi_E$ an anti-uniformizer of $E$ such that if $K/E/F$ and $K'/E$ is the maximal unramified subextension, we have $N_{K/K'}(\varpi_{K}) = \varpi_{E}$ (this exists by \cite[Lemme 5.5]{Fargues}).
    \item Let $E_{s}$ denote the unramified extension of $E$ of degree $s \in \mathbb{N}$.
\end{itemize}

\subsection{Acknowledgments}
The authors thank Tasho Kaletha and David Schwein for many helpful conversations. This work was started while both authors were supported by NSF grant DMS-1840234 at the University of Michigan. The first named author is partially supported by NSF grant DMS-2502131.

\tableofcontents

\section{Construction of Tannakian Categories}
All Tannakian categories constructed in this section are built from the category of isocrystals, whose objects are pairs $(V,\Phi)$, where $V$ is a finite-dimensional $L$-vector space and $\Phi$ is a $\mf{f}_{F}$-semilinear automorphism and whose morphisms are $\Phi$-equivariant linear maps. For a fixed $s \in \N$, one can also consider the full subcategory of all $(V,\Phi)$ which have a basis on which $\mf{f}_{F}^{s}$ acts on each basis element by a (possibly different) power of $\varpi_{F}$.
\subsection{Products of gerbes}\label{sec:prod}
\begin{defn}
Fix $s \in \N$. Then we define the category $P_{E/F, s}$ as follows. 
\begin{itemize}
    \item Objects are triples $(V, \Phi, \mc{D})$, where $V$ is a finite-dimensional $L_{E}$-vector space, $\Phi \colon V \to V$ is an $\mf{f}_{E}$-semilinear automorphism, and $\mc{D}$ is a $\prod\limits_{\Gamma_{E/F}} \frac{1}{s}\Z$-grading on $V$ such that the subspace $V_{(b_{\sigma}/s)_{\sigma}}$ has an $L_{E}$-basis on which $\Phi^{s}$ acts by the scalar $\prod\limits_{\gamma \in \Gamma_{E/F}} \gamma(\varpi_{E})^{b_{\gamma}}$.
    \item  Morphisms in $P_{E/F,s}$ will be $\Phi$-equivariant morphisms of $L_{E}$-spaces which preserve the gradings.
\end{itemize}
\end{defn}
\begin{lem}
 The category $P_{E/F, s}$ can be equipped with the structure of a Tannakian category over $E$ in a natural way. The forgetful functor $\omega_{P_{E/F, s}}: P_{E/F, s} \to \Vect_{L_E}$ is a fiber functor defined over $L_E$.
\end{lem}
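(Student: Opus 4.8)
The plan is to exhibit the tensor structure, the rigidity, and the requisite $E$-linear structure by hand, using the fact that $P_{E/F,s}$ is a full subcategory of a category of ``$L_E$-isocrystals with $W_{L_E/E}$-action and grading'' which is manifestly Tannakian. First I would define the tensor product: given $(V,\Phi,\mc{D})$ and $(V',\Phi',\mc{D}')$, set $V \otimes_{L_E} V'$ with $\Phi \otimes \Phi'$ and the grading $(V \otimes V')_{(c_\sigma/s)} = \bigoplus_{(b_\sigma)+(b'_\sigma) = (c_\sigma)} V_{(b_\sigma/s)} \otimes V'_{(b'_\sigma/s)}$; one checks that $(\Phi \otimes \Phi')^s = \Phi^s \otimes \Phi'^s$ acts on the relevant basis (the tensor products of the given bases) by $\prod_\gamma \gamma(\varpi_E)^{b_\gamma + b'_\gamma}$, so the tensor product lands back in $P_{E/F,s}$. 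The unit object is $(L_E, \Phi_0, \mc{D}_0)$ with $\Phi_0 = \mf{f}_E$ acting on $L_E$, trivial grading, and $\Phi_0^s$ acting by $1 = \prod_\gamma \gamma(\varpi_E)^0$; associativity and commutativity constraints are inherited from $\Vect_{L_E}$.

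Next I would address rigidity. For $(V,\Phi,\mc{D})$, define the dual $V^\vee = \Hom_{L_E}(V, L_E)$ with $\Phi^\vee = (\Phi^{-1})^*$ and grading $(V^\vee)_{(-b_\sigma/s)} = (V_{(b_\sigma/s)})^\vee$. The key point is that $(\Phi^\vee)^s$ acts on the dual basis by the inverse scalar $\prod_\gamma \gamma(\varpi_E)^{-b_\gamma}$, which is again of the required form (the exponents $-b_\gamma$ are still integers), so $V^\vee \in P_{E/F,s}$; the evaluation and coevaluation maps are grading-preserving and $\Phi$-equivariant by construction, giving the triangle identities. The forgetful functor $\omega_{P_{E/F,s}}$ to $\Vect_{L_E}$ is visibly exact, faithful, and tensor; since $\End$ of the unit object is the field $E$ (the $\mf{f}_E$-fixed, grading-compatible endomorphisms of $L_E$ in degree $0$, i.e. $L_E^{W_{L_E/E}} = E$), the category is Tannakian over $E$ with fiber functor over $L_E$ in the sense of Deligne--Milne. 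Abelian-ness (existence of kernels and cokernels) follows because $\Phi$-equivariant graded maps of $L_E$-vector spaces have kernels and cokernels computed degreewise in $\Vect_{L_E}$, and one checks these subquotients still satisfy the basis condition — here one uses that a subspace of $V_{(b_\sigma/s)}$ stable under $\Phi^s$, which acts as a scalar, is automatically spanned by part of any eigenbasis, hence inherits the property; the same for quotients.

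The step I expect to be the main obstacle is verifying that kernels, cokernels, images, and coimages computed in the ambient linear category actually land in $P_{E/F,s}$, i.e.\ that the basis/eigenvalue condition on the graded pieces is preserved under taking $\Phi$-stable $L_E$-subspaces and quotients. The condition ``$\Phi^s$ acts on $V_{(b_\sigma/s)}$ by a scalar which is a specified power of $\varpi_E$'' is not obviously closed under subobjects, since a priori a subspace could fail to be $\Phi^s$-stable or could mix gradings — but morphisms in $P_{E/F,s}$ preserve gradings and commute with $\Phi$, so their kernels and images are $\Phi$-stable and homogeneous, and on each homogeneous piece $\Phi^s$ is literally the scalar $\prod_\gamma \gamma(\varpi_E)^{b_\gamma}$, which restricts to and descends through any stable subspace. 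So this obstacle dissolves once one records that $\Phi^s$ acts as an honest scalar (not merely ``has a basis on which it acts by a scalar'') on each graded piece — which is forced by semisimplicity of the scalar action. The remaining verifications (biproducts, that $\omega_{P_{E/F,s}}$ is a $\otimes$-functor, coherence) are routine and I would not spell them out.
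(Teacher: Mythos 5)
Your overall approach is the same as the paper's (which is extremely terse: it defines the tensor product, identifies the unit and $\End(\mathbf{1}) = E$, and asserts rigidity, abelian-ness, and exactness of the forgetful functor without further comment). Your added discussion of duals, of $\End(\mathbf{1})$, and the identification of the real verification — that kernels and cokernels stay in $P_{E/F,s}$ — is useful and more honest than the paper's proof. However, the argument you give for that key verification contains a genuine error.

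You write that ``on each homogeneous piece $\Phi^{s}$ is literally the scalar $\prod_{\gamma}\gamma(\varpi_{E})^{b_{\gamma}}$,'' citing ``semisimplicity of the scalar action.'' This misreads the semi-linearity. The map $\Phi^{s}$ is $\mf{f}_{E}^{s}$-semilinear, so it cannot equal an $L_{E}$-linear scalar: if $\Phi^{s}(e_{i}) = \lambda e_{i}$ on a basis, then for $a \in L_{E}$ one has $\Phi^{s}(a e_{i}) = \mf{f}_{E}^{s}(a)\lambda e_{i}$, which is $\lambda(a e_{i})$ only when $a$ lies in the $\mf{f}_{E}^{s}$-fixed subfield $E_{s}$. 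In particular a $\Phi$-stable $L_{E}$-subspace is \emph{not} automatically ``spanned by part of any eigenbasis'' in any naive sense, and the claim does not ``dissolve'' for the reason you state. The conclusion is nonetheless true: the graded piece $V_{(b_{\sigma}/s)}$ having such a basis is the assertion that it is isoclinic, i.e.\ that the $E_{s}$-vector space $V_{(b_{\sigma}/s)}^{\Phi^{s}=\lambda}$ generates it over $L_{E}$; and for a $\Phi$-stable $L_{E}$-subspace $W$, the $E_{s}$-space $W^{\Phi^{s}=\lambda}$ generates $W$ over $L_{E}$ by the standard Galois-descent (Lang's theorem / Hilbert 90) argument for $\Phi$-spaces, as in Dieudonn\'{e}--Manin. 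The same applies to quotients. So the gap is a missing appeal to this descent fact rather than to an appeal to $\Phi^{s}$ being a linear scalar, which it is not.
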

\begin{proof}
 There is a natural functor $\otimes \colon P_{E/F,s} \times P_{E/F,s} \to P_{E/F,s}$ which sends $(V, \Phi, \mc{D}) \times (V', \Phi', \mc{D}')$ to the triple $(V \otimes_{L_{E}} V', \Phi \otimes \Phi', \mc{D} \otimes \mc{D}')$, where the first two terms are clear and the grading $\mc{D} \otimes \mc{D}'$ is given by 
$$(V \otimes V')_{(b_{\sigma}/s)_{\sigma}} = \bigoplus_{(c_{\sigma}/s)_{\sigma} \in \prod\limits_{\Gamma_{E/F}} \frac{1}{s}\Z} V_{(c_{\sigma}/s)_{\sigma}} \otimes_{L_{E}} V'_{(\frac{b_{\sigma} - c_{\sigma}}{s})_{\sigma}}.$$

The above construction gives a well-defined tensor functor on $P_{E/F,s}$. Denote by $\textbf{1}_{P_{E/F,s}}$ the object given by the vector space $L_{E}$ with the usual $\mf{f}_{E}$-action concentrated in the $(0)_{\sigma}$-graded piece. Then $P_{E/F,s}$  is a rigid abelian tensor category with identity object $\textbf{1}_{P_{E/F,s}}$. Moreover, the endomorphism algebra of $\textbf{1}_{P_{E/F,s}}$ is identified with $E$. Finally, we note that the  functor $\omega_{P_{E/F,s}} \colon P_{E/F,s} \to \text{Vect}_{L_{E}}$ is an $E$-linear exact tensor functor, making $P_{E/F,s}$ a Tannakian category over $E$ which is neutralized by the extension $L_{E}$. 
\end{proof}

If $\Gamma$ is a profinite group acting continuously on an abelian group $A$ and we fix $c \in Z^{2}(\Gamma,A)$, there is a standard way to construct a group extension of $\Gamma$ by $A$ which we use throughout this paper. Namely, one defines $\mc{E}$ as a set by $A \times \Gamma$, with group multiplication given by $(a,g) \cdot (b,h) = (a \cdot \prescript{g}{}b \cdot c(g,h), gh)$. Observe that there is a natural set-theoretic section $\Gamma \to \mc{E}$ sending $g$ to $(1,g)$.

We now identify the gerbe corresponding to the category $P_{E/F, s}$. For each $\sigma \in \Gamma_{E/F}$, the element $\sigma(\varpi_{E})$ determines a cocycle of $\Gamma_{E_{s}/E}$ representing the anti-canonical class in $H^{2}(\Gamma_{E_{s}/E}, E_{s}^{\times})$, given explicitly by $c_{\sigma}(\mf{f}_{E}^{i}, \mf{f}_{E}^{j}) = 1$ if $i+j < s$, and $\sigma(\varpi_{E})$ otherwise. We will consider a group extension corresponding to the 2-cocycle $(c_{\sigma})_{\sigma}$ of $\Gamma_{E_{s}/E}$ valued in $\prod_{\Gamma_{E/F}} \mathbb{G}_{m}(E_{s})$, given by 
$$1 \to  \prod_{\Gamma_{E/F}} \mathbb{G}_{m}(E_{s}) \to  \widetilde{\prod}_{\Gamma_{E/F}} \mc{E}_{c_{\sigma}} \to \Gamma_{E_{s}/E} \to 1,$$ where by ``$\widetilde{\prod}$'' we mean that the product is fibered with respect to the projection of each $\mc{E}_{c_{\sigma}}$ to $\Gamma_{E_{s}/E}$.  Denote this Galois gerbe by $\mc{E}_{(c_{\sigma})_{\sigma}}$.

We now make the following claim:
\begin{claim}
The category $P_{E/F, s}$ is isomorphic as Tannakian categories over $E$ to $\text{Rep}_{E_{s}}(\mc{E}_{(c_{\sigma})_{\sigma}})$.
\end{claim}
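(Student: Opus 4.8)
The plan is to write down an explicit tensor functor in each direction and check they are mutually quasi-inverse; the Tannakian comparison is then just a matter of matching tensor structures. First I would unwind the meaning of $\Rep_{E_s}(\mc{E}_{(c_\sigma)_\sigma})$ using the explicit $2$-cocycle $(c_\sigma)_\sigma$. The gerbe $\mc{E}_{(c_\sigma)_\sigma}$ is topologically generated by its band $\prod_{\Gamma_{E/F}}\Gm$ together with the set-theoretic section $\mf{f}_E^i \mapsto (1,\mf{f}_E^i)$, so a representation of $\mc{E}_{(c_\sigma)_\sigma}$ (realized, as in the isocrystal case, on a finite-dimensional $L_E$-vector space $V$) amounts to: (i) an algebraic action of the $E_s$-split torus $\prod_{\Gamma_{E/F}}\Gm$, equivalently a grading of $V$ by its character lattice $\prod_{\Gamma_{E/F}}\Z$; and (ii) the image $\Phi$ of the section at $\mf{f}_E$, an $\mf{f}_E$-semilinear automorphism of $V$. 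Since $\Gamma_{E_s/E}$ acts trivially on $\prod_{\Gamma_{E/F}}\Z$ (the extension $E_s/E$ has nothing to do with the index set $\Gamma_{E/F}$), the operator $\Phi$ must preserve the grading; and since the section raised to the $s$-th power equals the band element $(\sigma(\varpi_E))_\sigma \in \big(\prod_{\Gamma_{E/F}}\Gm\big)(E_s)$ — this being exactly the content of the cocycle $(c_\sigma)_\sigma$ — the operator $\Phi^s$ acts on the $(b_\sigma)_\sigma$-graded piece of $V$ by the scalar $\prod_\sigma \sigma(\varpi_E)^{b_\sigma}$.

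Comparing with the definition of $P_{E/F,s}$, the functor is then forced on us: send $(V,\Phi,\mc{D})$ to the representation with underlying space $V$, semilinear operator $\Phi$, and band-grading obtained from the $\prod_{\Gamma_{E/F}}\tfrac{1}{s}\Z$-grading $\mc{D}$ by multiplying indices by $s$. The defining condition on objects of $P_{E/F,s}$ — a basis of $V_{(b_\sigma/s)_\sigma}$ on which $\Phi^s$ acts by $\prod_\sigma\sigma(\varpi_E)^{b_\sigma}$ — is precisely the compatibility between (i) and (ii) above, so this assignment is well defined; conversely, rescaling the grading back and forgetting the gerbe structure gives a functor $\Rep_{E_s}(\mc{E}_{(c_\sigma)_\sigma}) \to P_{E/F,s}$, and the previous paragraph shows these are mutually quasi-inverse. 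On morphisms there is nothing to check beyond the observation that, because $\mc{E}_{(c_\sigma)_\sigma}$ is generated by its band and the section, a linear map is $\mc{E}_{(c_\sigma)_\sigma}$-equivariant if and only if it is $\Phi$-equivariant and grading-preserving — exactly the notion of morphism in $P_{E/F,s}$.

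Finally I would check the equivalence is monoidal and $E$-linear. The tensor product on $\Rep_{E_s}(\mc{E}_{(c_\sigma)_\sigma})$ is the usual tensor product of gerbe representations; translated through the dictionary above it tensors the $L_E$-spaces, tensors the semilinear operators, and convolves the gradings — i.e.\ it is exactly the functor $(V\otimes_{L_E} V',\Phi\otimes\Phi',\mc{D}\otimes\mc{D}')$ built in the lemma on $P_{E/F,s}$. The unit object of $\Rep_{E_s}(\mc{E}_{(c_\sigma)_\sigma})$ corresponds to $\mathbf{1}_{P_{E/F,s}}$, both have endomorphism algebra $E$, and the forgetful fiber functor $\omega_{P_{E/F,s}}$ matches the canonical fiber functor to $\Vect_{L_E}$ on the representation side; hence the equivalence is one of Tannakian categories over $E$, as claimed.

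The step I expect to be the real obstacle — as opposed to the functoriality and tensor-compatibility bookkeeping, which is routine once the dictionary is fixed — is making the first paragraph rigorous: pinning down the formalism of representations of a Galois gerbe banded by an $E_s$-split torus at finite level, in particular the precise sense in which such a representation is realized on an $L_E$-vector space (rather than an $E_s$-vector space), the continuity and algebraicity conditions involved, and the identification of the $s$-th power of the chosen section with the prescribed band element. This is the exact analogue, in the present ``product'' setting, of the standard identification of the category of isocrystals $(V,\Phi)$ over $L$ with $\Phi^s$ a power of $\varpi_F$ with representations of the finite-level Dieudonn\'e gerbe, and I would model the argument on that case and cite it where possible.
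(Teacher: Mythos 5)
There is a genuine gap, and you have actually put your finger on it yourself in the last paragraph without resolving it: the category $\Rep_{E_s}(\mc{E}_{(c_\sigma)_\sigma})$ in the paper's sense has objects that are finite-dimensional \emph{$E_s$-vector spaces} (this is what the subscript $E_s$ means), while objects of $P_{E/F,s}$ are $L_E$-vector spaces. Your proposed functor ``send $(V,\Phi,\mc{D})$ to the representation with underlying space $V$, semilinear operator $\Phi$, and band-grading obtained by rescaling indices'' therefore does not land in the right category; it produces something $L_E$-linear, not $E_s$-linear. The substance of the claim is precisely this change of coefficient field, and you cannot postpone it as the ``real obstacle'' to be modeled on the isocrystal case and cited away---it is the content of the proof, and the rest of your argument (matching gradings, operators, and tensor structures) is the routine part once this is in place.

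Concretely, what the paper does and what is missing from your sketch: in one direction it sends an $E_s$-representation $(V,\rho)$ to $(V\otimes_{E_s}L_E,\,\Phi\otimes\mathrm{id}_{L_E},\,\mc{D})$, i.e.\ it base-changes to $L_E$; in the other direction it extracts from an object $(V,\Phi,\mc{D})$ of $P_{E/F,s}$ the $E_s$-subspace of each graded piece on which $\Phi^s$ acts by the prescribed scalar, by taking the $E_s$-span of a suitable $L_E$-basis. The nontrivial verification is that this $E_s$-subspace is independent of the chosen basis: given any $v$ with $\Phi^s v=\lambda v$ and any eigenbasis $(e_i)$, writing $v=\sum a_i e_i$ forces $\mf{f}_E^s(a_i)=a_i$, so $a_i\in E_s$. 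Without this descent-to-$E_s$ step you do not even have a well-defined functor from $P_{E/F,s}$ to $\Rep_{E_s}(\mc{E}_{(c_\sigma)_\sigma})$, and the claim is not established. Apart from this, your identification of the torus action with the integral grading, of the section at $\mf{f}_E$ with the semilinear operator, of $\Phi^s$ with the band element $(\sigma(\varpi_E))_\sigma$ coming from the cocycle, and of the tensor structures, is correct and matches the paper.
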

Before proving the claim, we first investigate the latter Tannakian category. Let $\rho \colon \mc{E}_{(c_{\sigma})_{\sigma}} \to \text{Aut}(V)$ be a representation of $\mc{E}_{(c_{\sigma})_{\sigma}}$, for a finite-dimensional $E_{s}$-vector space $V$ . The Galois gerbe $\mc{E}_{(c_{\sigma})_{\sigma}}$ comes, by construction, equipped with a section $\Gamma_{E_{s}/F} \to \mc{E}_{(c_{\sigma})_{\sigma}}$, which allows us to identify it with pairs $((a_{\sigma}), x)$ for $a_{\sigma} \in E_{s}^{\times}$ for each $\sigma \in \Gamma_{E/F}$ and $x \in \Gamma_{E_{s}/E}$, where the group operation is $$((a_{\sigma})_{\sigma}, x)((b_{\sigma})_{\sigma},y) = ((a_{\sigma}x(b_{\sigma})c_{\sigma}(x,y))_{\sigma}, xy).$$

Our fixed representation $\rho$ gives an $\mf{f}_{E}$-semilinear automorphism $\Phi := \rho((1_{\sigma}), \mf{f}_{E})$ of $V$, which is such that $\Phi^{s}$ acts by $\rho((\sigma(\varpi_{E})), 1)$; note also that $\rho \colon \prod_{\Gamma_{E/F}} \mathbb{G}_{m}(E_{s}) \to \mathrm{GL}_{V}$ must be an algebraic homomorphism. Via simultaneous diagonalization, we may decompose $V$ into a direct sum $V = \bigoplus\limits_{\prod_{\Gamma_{E/F}} \mathbb{Z}} V_{(h_{\sigma})_{\sigma}}$, where $(\mathbb{G}_{m})_{\tau}$ acts on $V_{(h_{\sigma})_{\sigma}}$ via $x \cdot v = x^{h_{\tau}}v$ for each $x \in (\mathbb{G}_{m})_{\tau}(E_{s})$. Note that each subspace $V_{(h_{\sigma})}$ is $\Phi$-stable, and also that $\Phi^{s}$ must act on $V_{(h_{\sigma})}$ by the scalar $\prod_{\Gamma_{E/F}} \sigma(\varpi_{E})^{h_{\sigma}}$. A morphism of $\mc{E}_{(c_{\sigma})}$-representations from $V$ to $W$ sends $V_{(h_{\sigma})}$ to $W_{(h_{\sigma})}$ for all tuples $(h_{\sigma})$ and is $\Phi$-equivariant.

We now prove the claim.
\begin{proof}
We first define a functor from $\text{Rep}_{E_{s}}(\mc{E}_{(c_{\sigma})})$ to $P_{E/F,s}$; for a representation $(V, \rho)$, we send it to $(V \otimes_{E_{s}} L_{E}, \Phi \otimes \text{id}_{L_{E}}, \mc{D})$, where $\mc{D}$ is the grading induced by tensoring the $\prod_{\Gamma_{E/F}} \frac{1}{s} \Z$-grading $V_{(h_{\sigma}/s)} := V_{(h_{\sigma})}$ with $L_{E}$ over $E_{s}$. A morphism from $(V, \rho)$ to $(W, \rho')$ will go to the morphism given by applying the functor $- \otimes \text{id}_{L_{E}}$; this construction evidently defines a morphism of Tannakian categories over $E$. There is also an inverse functor, given by sending the tuple $(V, \Phi, \mc{D})$ to the representation given as a vector space $V'$ by the $E_{s}$-span of any choice of $L_{E}$-bases of $V_{(h_{\sigma}/s)}$ (for all tuples) such that $\Phi^{s}$ acts by $\prod_{\Gamma_{E/F}} \sigma(\varpi_{E})^{h_{\sigma}}$. Note that this vector space does not depend on the choice of basis: Indeed, given any $v$ on which $\Phi^{s}$ acts by $\lambda := \prod \sigma(\varpi_{E})^{h_{\sigma}}$ and choice of basis $(e_{i})$ for the corresponding graded piece, writing $v = \sum a_{i}e_{i}$ for $a_{i} \in L_{E}$ gives that $$\sum (a_{i} \lambda)e_{i} = \sum (\mf{f}_{E}^{s}(a_{i})\lambda) e_{i},$$ which means that $\mf{f}_{E}^{s}(a_{i}) = a_{i}$ for all $i$, so that $a_{i} \in E_{s}$. 

The map $\rho$ is defined by sending $((1_{\sigma}), \rho)$ to $\Phi$ and $(\mathbb{G}_{m})_{\tau}$ to the linear operator on $V'$ determined by $x \cdot v = x^{h_{\sigma}}v$ for any $v$ in the $E_{s}$-span of a basis of the graded component $V_{(h_{\sigma}/s)}$. This is a well-defined representation, since for any $(a_{\sigma})\in \prod_{\Gamma_{E/F}} \mathbb{G}_{m}(E_{s})$, we have that the maps $\Phi \circ (a_{\sigma}) \circ \Phi^{-1}$ and $\mf{f}_{E} (a_{\sigma}) \mf{f}_{E}^{-1}$ are $L_{E}$-linear and act on a vector element $v$ in $V'_{(h_{\sigma}/s)}$ as follows: $$\mf{f}_{E} (a_{\sigma}) \mf{f}_{E}^{-1}v = ((\mf{f}_{E}(a_{\sigma})),1) \cdot v = (\prod_{\Gamma_{E/F}} \mf{f}_{E}(a_{\sigma})^{h_{\sigma}})v,$$ and we also have (using the $\Phi$-stability of $V'_{(h_{\sigma})}$) that $$(\Phi \circ (a_{\sigma}) \circ \Phi^{-1})v = \Phi( (a_{\sigma}) \cdot (\Phi^{-1}(v))) = \Phi(\prod a_{\sigma}^{h_{\sigma}}\Phi^{-1}(v)) = (\prod \mf{f}_{E}(a_{\sigma})^{h_{\sigma}})v,$$ as desired. These define mutually-inverse functors, giving the desired equivalence.
\end{proof}

\begin{cor}\label{productgerbe} The gerbe over $\text{Spec}(E)_{\text{fpqc}}$ corresponding to the Tannakian category $P_{E/F,s}$ is isomorphic to the product of the gerbes (as fibered categories over $\text{Spec}(E)_{\text{fpqc}}$) $\prod_{\sigma \in \Gamma_{E/F}} \mc{E}_{c_{\sigma}}$. In particular, it is banded by the $E$-group scheme $\prod_{\Gamma_{E/F}} \mathbb{G}_{m}$ and corresponds to the anti-canonical class in $H^{2}(\Gamma_{L_{E}/E}, \prod_{\Gamma_{E/F}} L_{E}^{\times})$ corresponding to the element $(-1/s)_{\sigma} \in \prod_{\Gamma_{E/F}} \mathbb{Q}/\Z$.
\end{cor}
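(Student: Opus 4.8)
The plan is to read off the result from the Claim just established, so that nothing new needs to be proved about $P_{E/F,s}$ itself. By the Tannakian formalism the gerbe of fiber functors attached to a Tannakian category over $E$ is intrinsic to the tensor structure, so the equivalence $P_{E/F,s}\simeq\mathrm{Rep}_{E_s}(\mc{E}_{(c_\sigma)_\sigma})$ of Tannakian categories over $E$ identifies the gerbe corresponding to $P_{E/F,s}$ with the fpqc gerbe over $\mathrm{Spec}(E)$ represented by the Galois gerbe $\mc{E}_{(c_\sigma)_\sigma}$; concretely, under the mutually inverse functors built in the proof of the Claim the forgetful fiber functor $\omega_{P_{E/F,s}}$ goes to the tautological fiber functor of $\mathrm{Rep}_{E_s}(\mc{E}_{(c_\sigma)_\sigma})$ composed with the base change $E_s\hookrightarrow L_E$. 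It therefore suffices to describe $\mc{E}_{(c_\sigma)_\sigma}$ as a fibered category over $\mathrm{Spec}(E)_{\mathrm{fpqc}}$, to identify its band, and to compute its class.

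First I would recall the standard inflation $2$-functor from continuous Galois gerbes --- extensions of $\Gamma_{E_s/E}$ by $A(E_s)$ for $A$ an affine $E$-group scheme, with its Galois action --- to fpqc gerbes over $\mathrm{Spec}(E)$ banded by $A$: it sends such an extension, classified by a class in $H^2(\Gamma_{E_s/E},A(E_s))$, to the gerbe whose class is the image of that class under the inflation map $H^2(\Gamma_{E_s/E},A(E_s))\to H^2_{\mathrm{fpqc}}(\mathrm{Spec}(E),A)$. The key point is that this functor is monoidal for fibered products over $\Gamma_{E_s/E}$: on cocycles the fibered product $\widetilde{\prod}$ of the $\mc{E}_{c_\sigma}$ corresponds to the tuple $(c_\sigma)_\sigma$ valued in $\prod_{\Gamma_{E/F}}\mathbb{G}_m(E_s)=\big(\prod_{\Gamma_{E/F}}\mathbb{G}_m\big)(E_s)$, so the associated fpqc gerbe of $\mc{E}_{(c_\sigma)_\sigma}=\widetilde{\prod}_{\Gamma_{E/F}}\mc{E}_{c_\sigma}$ is the product over $\mathrm{Spec}(E)_{\mathrm{fpqc}}$ of the associated fpqc gerbes of the $\mc{E}_{c_\sigma}$, and it is banded by $\prod_{\Gamma_{E/F}}\mathbb{G}_m$. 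This gives the first two assertions.

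For the last assertion I would compute the class componentwise. The gerbe $\mc{E}_{c_\sigma}$ is classified by $[c_\sigma]\in H^2(\Gamma_{E_s/E},E_s^\times)$; since $E_s/E$ is unramified of degree $s$, local class field theory identifies this group with $\tfrac{1}{s}\Z/\Z$ through the invariant map, and the class of the cocycle $c_\sigma$ (which equals $\sigma(\varpi_E)^{\lfloor(i+j)/s\rfloor}$ on $\mf{f}_E^i,\mf{f}_E^j$ with $0\le i,j<s$) has invariant $v_{E_s}(\sigma(\varpi_E))/s=-1/s$, since $\sigma(\varpi_E)$ is an anti-uniformizer; this is the anti-canonical class. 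Inflating along $\Gamma_{L_E/E}\twoheadrightarrow\Gamma_{E_s/E}$ --- which is compatible with the invariant maps and realizes $H^2(\Gamma_{L_E/E},L_E^\times)\cong\mathbb{Q}/\Z$ as the colimit over $s$ --- the class of $\mc{E}_{c_\sigma}$ becomes $-1/s\in\mathbb{Q}/\Z$; assembling over $\sigma\in\Gamma_{E/F}$ and using that $H^2$ commutes with the finite product gives the class $(-1/s)_\sigma\in\prod_{\Gamma_{E/F}}\mathbb{Q}/\Z$ of the gerbe $\prod_\sigma\mc{E}_{c_\sigma}$.

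The step I expect to be the main obstacle is the second paragraph: stating precisely and cleanly the inflation functor from Galois gerbes to fpqc gerbes and verifying that it is compatible with fibered products, bands, and cohomology classes --- the content is routine but unwinding the various fibered-category identifications carefully takes some care. A secondary point to watch is the normalization in the invariant computation: one must keep track of signs and Frobenius conventions so as to land on the anti-canonical class $(-1/s)_\sigma$ rather than $(1/s)_\sigma$.
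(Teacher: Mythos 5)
Your proposal is correct and takes essentially the same route as the paper: the Corollary is meant to be read off directly from the Claim (which is why the paper states it without a separate proof), and your unwinding — transporting the gerbe of fiber functors along the Tannakian equivalence $P_{E/F,s}\simeq\mathrm{Rep}_{E_s}(\mc{E}_{(c_\sigma)_\sigma})$, identifying the band of the fibered-product Galois gerbe, and computing the class componentwise via the local invariant map (with the correct sign $v_{E_s}(\sigma(\varpi_E))/s=-1/s$ for the anti-uniformizer) — is exactly the implicit argument. The one thing worth noting is that you correctly flagged and handled the sign convention, which is the only real pitfall here.
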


What the above construction does is give an isomorphism of $\prod_{\Gamma_{E/F}}\mathbb{G}_{m}$-gerbes from $\mc{E}_{P_{E/F,s}}$ to $\mc{E}_{(c_{\sigma})_{\sigma}}$; note that since $H^{1}(\Gamma_{L_{E}/E}, \prod_{\Gamma_{E/F}} L_{E}^{\times})$ vanishes, this isomorphism is unique up to conjugation by an element of $\prod_{\Gamma_{E/F}} L_{E}^{\times}$, which we will see later does not affect cohomology.

\subsection{Limits of gerbes}
For $s \mid s'$, we have a fully faithful embedding of categories from $\iota_{s,s'} \colon P_{E/F,s}$ to $P_{E/F,s'}$, which sends the triple $(V, \Phi, \mathcal{D})$ to the triple $(V, \Phi, \mathcal{D}_{s'/s})$, where $\mathcal{D}_{s'/s}$ is the ${\prod_{\Gamma_{E/F}}\frac{1}{s'}\mathbb{Z}}$-grading of $V$ given by $V_{(b_{\sigma}/s')} = V_{([b_{\sigma}/(s'/s)]/s)}$ if $s'/s$ divides $b_{\sigma}$ for all $\sigma$, and is zero otherwise. 

We define the category $P_{E/F}$ to be the direct limit $\varinjlim_{s} P_{E/F,s}$ over all positive integers $s$ (direct limit of categories in the obvious sense). Explicitly, a morphism between the objects $(V, \Phi, \mathcal{D}, s)$ and $(V', \Phi', \mathcal{D}', s')$ can be identified with a morphism between $\iota_{s, ss'}(V, \Phi, \mathcal{D}, s)$ and $\iota_{s', ss'}(V', \Phi', \mathcal{D}', s')$ in the category $P_{E/F,ss'}$. 

Note that if one looks at a subspace $V_{(b_{\sigma}c/sc)}$, viewed as an object of $P_{E/F,sc}$, then although a priori it only has a basis on which $\mf{f}_{E}^{sc}$ acts by $\prod_{\sigma \in \Gamma_{E/F}} \sigma(\varpi_{E})^{b_{\sigma}c}$, one can say even more: It has an $L_{E}$-basis on which $\mf{f}_{E}^{s}$ acts by $\prod_{\sigma \in \Gamma_{E/F}} \sigma(\varpi_{E})^{ b_{\sigma}}$, as follows from the general theory of $\Phi$-$L_{E}$ spaces (cf. \cite[p.11]{Kottwitzisoci}). This shows that the embedding $\iota_{s,s'}(P_{E/F,s})$ may be described as all tuples whose only non-zero graded components have indices which may be written as $(\frac{b_{\sigma}(s'/s)}{s(s'/s)})_{\sigma}$. The identification of the gerbe corresponding to $P_{E/F,s}$ with $\mc{E}_{(c_{\sigma})_{\sigma}}$ is compatible for varying $s$, provided that we choose the sections compatibly (which is always possible), and hence we see that the Tannakian category $P_{E/F}$ is banded by $\prod_{\Gamma_{E/F}} \mathbb{D}(L_{E})$ (where $\D$ is the pro-torus with character group $\Q$), and corresponds to the anti-canonical class in $H^{2}(\Gamma_{L_{E}/E}, \prod_{\Gamma_{E/F}} \mathbb{D}(L_{E}))$ given by the inverse limit of the finite anti-canonical classes (these transition maps are given for $s \mid s'$ by the multiplication-by-$s'/s$ map $\frac{1}{s'}\Z/\Z \to \frac{1}{s}\Z/\Z$). 

For $s \mid s'$, it is worth investigating the transition maps on bands corresponding to the map of extensions $p_{s',s} \colon \mc{E}_{P_{E/F,s'}} \to \mc{E}_{P_{E/F,s}}$. First, we need to say what this map of extensions is: Note that, by base-changing to the gerbe $\mc{E}_{(c_{\sigma})_{\sigma}}$ to $\ov{F}$, we get the extension 
\begin{equation*} 1 \to \prod_{\Gamma_{E/F}} \mathbb{G}_{m}(\ov{F}) \to \mc{E}_{(c'_{\sigma})_{\sigma}} \to \Gamma_{\ov{F}/E} \to 1,
\end{equation*}
where each $c_{\sigma}'$ denotes the inflated anti-canonical class $\Gamma_{\ov{F}/E}^{2} \to \Gamma_{E_{s}/E}^{2} \xrightarrow{c_{\sigma,s}} \prod_{\Gamma_{\ov{F}/E}} E_{s}^{\times}$---we can do the identical thing for $s'$. We then get a morphism of group extensions 
\[
\begin{tikzcd}
1 \arrow{r} & \prod_{\Gamma_{E/F}} \mathbb{G}_{m}(\ov{F}) \arrow{r} \arrow["s'/s"]{d} & \mc{E}_{(c_{\sigma,s'}')_{\sigma}} \arrow{d} \arrow{r} & \Gamma_{\ov{F}/E} \arrow[equals]{d} \to 1 \\
1 \arrow{r} & \prod_{\Gamma_{E/F}} \mathbb{G}_{m}(\ov{F}) \arrow{r} & \mc{E}_{(c_{\sigma,s}')_{\sigma}} \arrow{r} & \Gamma_{\ov{F}/E} \to 1,
\end{tikzcd}
\]
and we get a commutative square of Tannakian categories 
\[
\begin{tikzcd}
P_{E/F,s',\ov{L}} \arrow{r} & \text{Rep}_{\ov{L}}(\mc{E}_{(c_{\sigma,s'}')_{\sigma}}) \\
P_{E/F,s,\ov{L}} \arrow{r} \arrow{u} & \text{Rep}_{\ov{L}}(\mc{E}_{(c_{\sigma,s}')_{\sigma}}), \arrow{u}
\end{tikzcd}
\]
where the left-hand map is the transition map described above, the right-hand map is the one induced by the above morphism of extensions, and the horizontal ones are induced by the equivalence of Tannakian categories described above. 

By $P_{E/F,s,\ov{L}}$ we mean the category of $\prod_{\Gamma_{E/F}} \frac{1}{s}\mathbb{Z}$-graded finite-dimensional $\ov{L}$-vector spaces $V$ with a $W_{E}$-semilinear action, such that each $V_{(a_{\sigma}/s)}$ is $W_{E}$-stable and $(V_{(a_{\sigma}/s)})^{\Gamma_{\ov{L}/L_{E}}}$ has an $L_{E}$-basis (which implies that it is an $\ov{L}$-basis of $V_{(a_{\sigma}/s)}$) on which $\mf{f}_{E}^{s}$ acts by the scalar $\prod_{\Gamma_{E/F}} \sigma(\varpi_{E})^{a_{\sigma}}$. By $\text{Rep}_{\ov{L}}(\mc{E}_{(c_{\sigma,s}')})$, we mean the Tannakian category of representations $(V, \rho)$ (over $\ov{L}$) of $\mc{E}_{(c_{\sigma,s}')}$. The equivalence of the previous subsection goes through for these two categories after some elementary modifications. We check that, starting with an object $(V, \Phi, \mc{D})$, going right and then up gives the representation $(V, \rho)$, where $\rho$ sends $\mf{f}_{E}$ to $\Phi$, and the corresponding $E$-morphism $\prod \mathbb{G}_{m} \to \mathrm{GL}_{V}$ is determined by acting on the subspace $V_{(h_{\sigma}/s)} \subseteq V$ via $$(a_{\sigma}) \cdot v = \prod a_{\sigma}^{h_{\sigma}(s'/s)}v.$$

On the other hand, going up and then right gives the representation $(V, \rho')$, where $\rho'$ sends $\mf{f}_{E}$ to $\Phi$ with $E$-morphism determined by acting on $V_{(h'_{\sigma}/s')}$ by $$(a_{\sigma}) \cdot v= \prod a_{\sigma}^{h'_{\sigma}}v.$$ Now the result follows from the fact that $h'_{\sigma} = h_{\sigma}(s'/s)$ for all $\sigma$, by construction of our transition functor. 

\begin{cor} The transition morphism of Tannakian categories $P_{E/F,s} \to P_{E/F,s'}$ gives the $s'/s$-power morphism on bands (via the identifications of the above Corollary \ref{productgerbe}, using compatible sections).
\end{cor}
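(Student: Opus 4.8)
The plan is to read this off from the explicit computation carried out in the two paragraphs immediately preceding the statement, together with a descent argument. Since $\prod_{\Gamma_{E/F}}\Gm$ is abelian, the morphism of bands attached to a morphism of $\prod_{\Gamma_{E/F}}\Gm$-gerbes over $\mathrm{Spec}(E)_{\mathrm{fpqc}}$ is an honest morphism of fpqc sheaves of abelian groups over $E$, and hence may be checked after the faithfully flat base change to $L_E$ (indeed to $\ov{L}$); moreover the identification of the gerbe of fiber functors of $P_{E/F,s}$ with $\mc{E}_{(c_\sigma)_\sigma}$ furnished by Corollary~\ref{productgerbe} commutes with this base change. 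So it suffices to identify the band morphism after passing to the commutative square of Tannakian categories over $\ov{L}$ displayed above.

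In that square the right-hand vertical arrow is, by construction, induced by the displayed morphism of group extensions of $\Gamma_{\ov{F}/E}$ by $\prod_{\Gamma_{E/F}}\Gm(\ov{F})$, whose restriction to the common kernel is literally the $s'/s$-power map (the arrow labelled ``$s'/s$''). Unwinding the Tannakian formalism, the morphism of gerbes of fiber functors induced by this vertical arrow therefore has band morphism equal to the $s'/s$-power map. It then remains only to know that the left-hand vertical arrow of the square, namely the base change of the transition functor $\iota_{s,s'}$, matches the right-hand one under the horizontal equivalences; but this is exactly the content of the final computation above: starting from $(V,\Phi,\mc{D})$, going right-then-up produces the representation on which $(a_\sigma)$ acts on $V_{(h_\sigma/s)}$ by $\prod a_\sigma^{h_\sigma(s'/s)}$, while going up-then-right produces the representation on which $(a_\sigma)$ acts on $V_{(h'_\sigma/s')}$ by $\prod a_\sigma^{h'_\sigma}$, and the defining formula for $\iota_{s,s'}$ forces $h'_\sigma = h_\sigma(s'/s)$. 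Hence the square commutes up to natural isomorphism, the induced square of gerbes commutes, and the gerbe morphism attached to $P_{E/F,s}\to P_{E/F,s'}$ has band morphism $s'/s$.

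The one point requiring care is the parenthetical ``using compatible sections'': the identifications of Corollary~\ref{productgerbe} for $s$ and for $s'$ each depend on a choice of set-theoretic section, and to obtain the band morphism on the nose one wants these chosen compatibly with the transition map $p_{s',s}$; as noted in the previous subsection this is possible. Any residual ambiguity is conjugation by an element of $\prod_{\Gamma_{E/F}} L_E^\times$, i.e.\ a class in $H^1(\Gamma_{L_E/E},\prod_{\Gamma_{E/F}} L_E^\times)=0$, which is inner and so does not change the band morphism. The real (and modest) obstacle is thus purely bookkeeping: confirming that the horizontal ``equivalences of the previous subsection'' are literally the same construction for $s$ and $s'$ and interact correctly with the grading shift $\mc{D}\mapsto\mc{D}_{s'/s}$, which is precisely what the displayed diagram chase verifies.
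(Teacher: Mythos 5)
Your proposal is correct and follows essentially the same route as the paper: both appeal to the displayed commutative square of Tannakian categories over $\ov{L}$, identify the right-hand vertical arrow as the pullback along the morphism of group extensions with kernel restriction the $s'/s$-power map, and use the verification that $h'_\sigma = h_\sigma(s'/s)$ to conclude the square commutes and hence the two band morphisms agree. You make explicit two points the paper leaves implicit — that abelianness of $\prod_{\Gamma_{E/F}}\Gm$ lets one check the band morphism after faithfully flat base change, and that the residual ambiguity in the identification of Corollary~\ref{productgerbe} is inner and therefore acts trivially on the (abelian) band — but these are refinements of the same argument rather than a different approach.
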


\subsection{Modules}\label{sec:mod}
Now, let $R_{E/F}$ denote the $\prod_{\Gamma_{E/F}}\Z$-graded $L_{E}$-algebra $$\bigoplus_{(b_{\tau})_{\tau} \in \prod_{\Gamma_{E/F}}\Z} (L_{E})_{(b_{\tau})_{\tau}},$$ with addition defined component-wise and multiplication determined by $\ell_{(b_{\tau})_{\tau}} \cdot \ell'_{(c_{\tau})_{\tau}} = (\ell \cdot \ell')_{(b_{\tau} + c_{\tau})_{\tau}}$ for all $\ell, \ell' \in L_{E}$. This ring will also be equipped with an $\mf{f}_{E}$-semilinear action, denoted by $\Phi_{R_{E/F}}$, determined by the semilinearity condition and the identity $\Phi_{R_{E/F}}(1_{(b_{\sigma})}) = \prod_{\gamma \in \Gamma_{E/F}} \gamma(\varpi_{E})^{b_{\gamma}}1_{(b_{\sigma})}$.
\begin{defn}
For a positive integer $s$, we define the category $RP_{E/F,s}$ to have objects given by tuples $(V, \Phi, \mc{D})$, consisting of
\begin{itemize}
    \item a (not necessarily finite-dimensional) $L_{E}$-vector space $V$  equipped with an $R_{E/F}$-action such that it is finitely generated as an $R_{E/F}$-module,
    \item an $\mf{f}_{E}$-semilinear isomorphism $\Phi$, 
    \item and a $\prod_{\Gamma_{E/F}} \frac{1}{s}\Z$-grading $\mc{D}$ such that $V_{(a_{\sigma}/s)}$ has an $L_{E}$-basis on which $\Phi^{s}$ acts by the scalar $\prod_{\sigma \in \Gamma_{E/F}} \sigma(\varpi_{E})^{a_{\sigma}}$.
\end{itemize}

In addition, we require that the $R_{E/F}$-action is compatible with the $\Phi$-action, in the sense that, for all $v \in V$ and $r \in R$, we have $\Phi(r \cdot v) = \Phi_{R_{E/F}}(r) \cdot \Phi(v)$. Note that this last condition implies that $$1_{(b_{\sigma})} \cdot V_{(a_{\sigma}/s)} \subseteq V_{(\frac{a_{\sigma} + sb_{\sigma}}{s})}.$$

Morphisms in $RP_{E/F,s}$ will be $\Phi$-equivariant morphisms of $R_{E/F}$-modules that preserve the gradings. Note that $RP_{E/F,s}$ is a rigid abelian tensor category over $E$ with identity object $R_{E/F}$ (with the canonical $R_{E/F}$-module structure, and $\Phi = \Phi_{R_{E/F}}$).
\end{defn}

We now define a fiber functor for this category. First, note that there is a canonical surjective augmentation map $R_{E/F} \to L_{E}$ defined by sending each $1_{(b_{\sigma})}$ to $1$, whose kernel we will denote by $I$. Note for an object $(V, \Phi, \mc{D})$ of $RP_{E/F,s}$, that $V$ is a finitely generated and free, hence flat, $R_{E/F}$-module. Hence, we have an equality of $L_{E}$-vector spaces $V \otimes_{R} R_{E/F}/I = V/IV$. We define a functor $\omega_{RP_{E/F, s}}: RP_{E/F, s} \to \Vect_{L_E}$ by
\begin{equation*}
    (V, \Phi, \mc{D}) \mapsto V/IV.
\end{equation*}

Note that $IV$ is exactly the kernel of the natural $L_{E}$-linear surjection $V \to \bigoplus_{(0) \leq (a_{\sigma}/s) < (1)} V_{(a_{\sigma}/s)}$ defined on each $V_{([a_{\sigma}+sb_{\sigma}]/s)}$ by multiplication by $1_{(-b_{\sigma})}$. This defines an $L_{E}$-linear tensor functor. 

\begin{lem}\label{exactlem1} The above functor is a fiber functor; that is, it is faithful and exact.
\end{lem}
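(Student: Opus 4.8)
The plan is to establish faithfulness and exactness of $\omega_{RP_{E/F,s}}$ separately, using the concrete description of $IV$ provided just before the lemma. The key structural observation is that every object $(V,\Phi,\mc{D})$ is a finitely generated \emph{free} $R_{E/F}$-module (since $R_{E/F}$ is graded, a graded finitely generated module which is free as discussed is in fact graded-free on homogeneous generators), so $V\otimes_{R_{E/F}} R_{E/F}/I = V/IV$ is exactly $\bigoplus_{(0)\le (a_\sigma/s)<(1)} V_{(a_\sigma/s)}$, a finite-dimensional $L_E$-vector space. The map of the previous paragraph, $V\to \bigoplus_{(0)\le(a_\sigma/s)<(1)} V_{(a_\sigma/s)}$ sending $V_{([a_\sigma+sb_\sigma]/s)}$ to $V_{(a_\sigma/s)}$ via multiplication by $1_{(-b_\sigma)}$, is visibly $L_E$-linear and surjective with kernel $IV$, so the functor lands in finite-dimensional vector spaces.

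First I would prove exactness. Since $RP_{E/F,s}$ is abelian, it suffices to show $\omega$ is both left and right exact, equivalently that it carries a short exact sequence $0\to V'\to V\to V''\to 0$ to a short exact sequence. Right exactness of $-\otimes_{R_{E/F}} R_{E/F}/I$ is automatic. For left exactness I would use flatness: each object is free over $R_{E/F}$, hence flat, and more is true --- a short exact sequence in $RP_{E/F,s}$ consists of graded $R_{E/F}$-module maps between graded-free modules, so it splits as graded $R_{E/F}$-modules (forgetting $\Phi$), and tensoring a split sequence with $R_{E/F}/I$ stays exact. Thus $0\to V'/IV'\to V/IV\to V''/IV''\to 0$ is exact. (One does need to check that the submodule $V'$ of $V$ in the category is automatically a graded free direct summand as an $R_{E/F}$-module; this follows because $V/V'\cong V''$ is itself free, so the surjection $V\to V''$ splits $R_{E/F}$-linearly and graded-ly.)

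Next, faithfulness. Suppose $f\colon V\to W$ is a morphism in $RP_{E/F,s}$ with $\omega(f)=0$, i.e.\ $f(V)\subseteq IW$. Because $f$ is a morphism of $R_{E/F}$-modules, its image $f(V)$ is an $R_{E/F}$-submodule of $W$ contained in $IW$; I want to conclude $f=0$. Here I would use that $f$ is also graded and $\Phi$-equivariant, together with the identification $W/IW=\bigoplus_{(0)\le(a_\sigma/s)<(1)} W_{(a_\sigma/s)}$: the condition $f(V)\subseteq IW = \ker(W\to \bigoplus_{(0)\le(a_\sigma/s)<(1)}W_{(a_\sigma/s)})$ says precisely that for every homogeneous $v\in V_{(a_\sigma/s)}$, writing $(a_\sigma/s)=([a'_\sigma+sb_\sigma]/s)$ with $(0)\le (a'_\sigma/s)<(1)$, we have $1_{(-b_\sigma)}\cdot f(v)=0$ in $W_{(a'_\sigma/s)}$. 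But $1_{(-b_\sigma)}$ acts invertibly on $W$ (its inverse is $1_{(b_\sigma)}$, since $1_{(b_\sigma)}1_{(-b_\sigma)}=1_{(0)}$ acts as the identity), so $f(v)=0$. As homogeneous elements span $V$, we get $f=0$.

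The main obstacle I anticipate is the claim that the relevant modules and submodules are graded-free (and that sub/quotient objects in $RP_{E/F,s}$ remain free $R_{E/F}$-modules), which is what makes both flatness and the splitting arguments work; the paper asserts freeness for objects but one must verify it is preserved under kernels and cokernels of morphisms in the category. This reduces to the fact that $R_{E/F}=L_E[\prod_{\Gamma_{E/F}}\Z]$ is a (graded) localization-type ring --- indeed it is the group algebra of a free abelian group, hence a graded-local-ish ring whose graded finitely generated modules appearing here are graded-free --- so kernels and images of graded maps between graded-free finitely generated modules are again graded-free; alternatively one can argue directly that any such submodule is determined by its homogeneous pieces, each of which is a finite-dimensional $L_E$-space, and reassemble. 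Once this is in hand, the rest is the routine bookkeeping sketched above.
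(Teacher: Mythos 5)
Your faithfulness argument coincides with the paper's: both rest on the fact that each $1_{(b_\sigma)}$ acts invertibly, so a nonzero \emph{homogeneous} element of $W$ cannot lie in $IW$, and for graded $f$ with $f(V)\subseteq IW$ this forces $f$ to vanish on each $V_{(a_\sigma/s)}$.

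For exactness, however, you take a genuinely different route. The paper argues directly: given an injection $f\colon V\to W$ and $v\in V$ with $f(v)\in IW$, it decomposes $v$ into homogeneous pieces and uses $R_{E/F}$-linearity of $f$ together with the explicit description of $IW$ to deduce $v\in IV$. You instead argue structurally: form the cokernel $V''$ in $RP_{E/F,s}$, observe it is a finitely generated graded-free (hence flat) $R_{E/F}$-module, and conclude that $0\to V'\to V\to V''\to 0$ stays exact after $-\otimes_{R_{E/F}}R_{E/F}/I$ (whether via an explicit graded splitting or via $\mathrm{Tor}_1(V'',R_{E/F}/I)=0$). Your argument is more conceptual but requires the auxiliary freeness/abelianity input; the paper's is more hands-on but entirely self-contained. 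The obstacle you flag is not actually a gap, and your parenthetical sketch is the right argument, made precise as follows: since each $1_{(b_\sigma)}$ acts invertibly, multiplication by $1_{(b_\sigma)}$ identifies $V_{(a_\sigma/s)}$ with $V_{([a_\sigma+sb_\sigma]/s)}$, so any object $V$ of $RP_{E/F,s}$ is canonically $\bigoplus_{(0)\le(a_\sigma/s)<(1)}R_{E/F}\otimes_{L_E}V_{(a_\sigma/s)}$ as a graded $R_{E/F}$-module; the same applies to kernels and cokernels of morphisms in $RP_{E/F,s}$, which are again finitely generated graded $R_{E/F}$-modules satisfying the slope condition (that condition being stable under $\Phi^s$-stable subspaces and quotients). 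Hence every object, and every sub- or quotient-object, is graded-free, and your proof goes through.
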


\begin{proof} We first prove faithfulness. Let $f$ and $g$ be two morphisms from $V$ to $W$ (we omit the other structure from our notation for brevity), and suppose that their image under this functor coincides, and let $V_{(b_{\sigma}/s)}$ be a fixed graded component of $V$. Since these categories are additive, we may assume that $g$ equals zero. For any $v \in  V_{(b_{\sigma}/s)}$, we know that $f(v) \in W_{(b_{\sigma}/s)}$, but also that $f(v) \in IW$, and thus conclude by observing that $IW \cap W_{(b_{\sigma}/s)}= 0$ for all tuples $(b_{\sigma}/s)$.

We now prove exactness. By construction, this functor is evidently right-exact, so we only need to prove that it preserves injections. To this end, suppose that $v \in V$ is such that $f(v) \in IW$ for injective $f$. Writing $$v = \sum_{(a_{\sigma}/s)} v_{(a_{\sigma}/s)}$$ in terms of graded components, we note that, since $f$ is $R_{E/F}$-linear, for all $(b_{\sigma})$, we have $$f(v_{([a_{\sigma} + sb_{\sigma}]/s)}) = 1_{(b_{\sigma})} \cdot f(1_{(-b_{\sigma})} \cdot v_{([a_{\sigma} + sb_{\sigma}]/s)}).$$ It follows that $$f(v) = \sum_{(0) \leq (a_{\sigma}/s) < (1)} \sum_{(b_{\sigma})} 1_{(b_{\sigma})} \cdot f(1_{(-b_{\sigma})} \cdot v_{([a_{\sigma} + sb_{\sigma}]/s)}).$$ 

For this expression to lie in $IW$, it is necessary that, for a fixed $(0) \leq (a_{\sigma}/s) < (1)$, we have $$\sum_{(b_{\sigma}) \neq (0)} f(1_{(-b_{\sigma})} \cdot v_{([a_{\sigma} + sb_{\sigma}]/s)}) = -f(v_{(a_{\sigma}/s)})$$ inside the space $W_{(a_{\sigma}/s)}$. Since $f$ is injective, this means that $$\sum_{(b_{\sigma}) \neq (0)} 1_{(-b_{\sigma})} \cdot v_{([a_{\sigma} + sb_{\sigma}]/s)} = -v_{(a_{\sigma}/s)}$$ in $V_{(a_{\sigma}/s)}$, which implies that $v \in IV$.
\end{proof}

It follows from the above Lemma \ref{exactlem1} that $RP_{E/F,s}$ is a Tannakian category over $E$. For $s \mid s'$, note that the embedding functor $\iota_{s,s'}$ extends naturally to a functor, also denoted by $\iota_{s,s'}$, from $RP_{E/F,s}$ to $RP_{E/F,s'}$, defined identically (with the $R_{E/F}$-module structure of $\iota_{s,s'}(X)$ being the same as the one associated to $X$). As before, the direct limit of these categories will be denoted by $RP_{E/F}.$

We now want to investigate the band of $RP_{E/F,s}$ (and thus $RP_{E/F}$). There is an ``extension of scalars" functor $q \colon P_{E/F,s} \to RP_{E/F,s}$ given by $V \mapsto R_{E/F} \otimes_{L_{E}} V$ on objects, and by $f \mapsto \text{id}_{R_{E/F}} \otimes f$ on morphisms, which is an $L_{E}$-linear tensor functor. Recall that an exact tensor functor between Tannakian categories $P$ and $Q$ is called a \textit{quotient functor} if every object of $Q$ is a subquotient of an object in the image of the functor. 

\begin{prop} The functor $q \colon P_{E/F,s} \to RP_{E/F,s}$ is a quotient functor.
\end{prop}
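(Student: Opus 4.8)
The plan is to verify the two defining conditions of a quotient functor for $q \colon P_{E/F,s} \to RP_{E/F,s}$: that $q$ is exact as a tensor functor, and that every object of $RP_{E/F,s}$ is a subquotient of an object in the image of $q$. Exactness is essentially formal: $q$ is extension of scalars along the inclusion $L_E \hookrightarrow R_{E/F}$, which makes $R_{E/F}$ a free (hence faithfully flat) $L_E$-module, so $V \mapsto R_{E/F} \otimes_{L_E} V$ is exact on the underlying vector spaces; one checks that the $\Phi$-action and grading on $R_{E/F} \otimes_{L_E} V$ are the obvious ones (using $\Phi_{R_{E/F}} \otimes \Phi$ and the tensor-product grading), and that a short exact sequence in $P_{E/F,s}$ maps to one whose terms satisfy the required basis condition on each graded piece --- this last point follows because the graded pieces of $R_{E/F} \otimes_{L_E} V$ are finite direct sums of shifts of graded pieces of $V$, each of which has the prescribed $\Phi^s$-eigenbasis. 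It is already noted in the excerpt that $q$ is an $L_E$-linear tensor functor, so rigidity and tensor-compatibility come for free.

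The substantive step is the subquotient condition. Given an object $(W, \Phi, \mc{D})$ of $RP_{E/F,s}$, I want to exhibit it as a subquotient of $q(V)$ for a suitable $(V, \Phi_V, \mc{D}_V)$ in $P_{E/F,s}$. The natural candidate is $V := \bigoplus_{(0) \le (a_\sigma/s) < (1)} W_{(a_\sigma/s)}$, the finite-dimensional $L_E$-vector space cut out by the fundamental domain for the $\prod_{\Gamma_{E/F}}\Z$-shift action; this is precisely $W/IW = \omega_{RP_{E/F,s}}(W)$, and it inherits a $\Phi$-action and a grading making it an object of $P_{E/F,s}$ (the required $\Phi^s$-eigenbasis on $W_{(a_\sigma/s)}$ is part of the data of $W$). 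There is then a canonical $R_{E/F}$-linear, $\Phi$-equivariant, graded surjection $q(V) = R_{E/F}\otimes_{L_E} V \twoheadrightarrow W$ sending $1_{(b_\sigma)} \otimes w \mapsto 1_{(b_\sigma)} \cdot w$: surjectivity holds because $W$ is generated over $R_{E/F}$ by its graded pieces in the fundamental domain, which follows from finite generation of $W$ as an $R_{E/F}$-module together with the fact that $1_{(b_\sigma)}$ acts invertibly (with inverse $1_{(-b_\sigma)}$), so any homogeneous element can be shifted into the fundamental domain. One checks this map respects all the structure, whence $W$ is a quotient — in particular a subquotient — of an object in the image of $q$.

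The main obstacle I anticipate is purely bookkeeping rather than conceptual: one must confirm that the object $V$ so constructed genuinely lies in $P_{E/F,s}$ and not merely in some larger category, i.e. that the grading on $V$ takes values in $\prod_{\Gamma_{E/F}} \tfrac{1}{s}\Z$ (automatic from $W$) and that each graded piece $V_{(a_\sigma/s)} = W_{(a_\sigma/s)}$ really does carry an $L_E$-basis on which $\Phi^s$ acts by $\prod_\sigma \sigma(\varpi_E)^{a_\sigma}$ — but for indices in the fundamental domain this is exactly the hypothesis on $W$, so there is nothing to prove. A secondary subtlety is verifying $\Phi$-equivariance of the surjection $q(V)\to W$, which reduces to the compatibility $\Phi(r\cdot w) = \Phi_{R_{E/F}}(r)\cdot\Phi(w)$ built into the definition of $RP_{E/F,s}$ together with the fact that the $\Phi$ on $q(V)$ is by definition $\Phi_{R_{E/F}}\otimes\Phi_V$. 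Neither of these requires genuine work; the proposition is true essentially because inverting the grading-shift operators $1_{(b_\sigma)}$ is an exact operation realized by passing to a module category, which is the conceptual point the paper is making.
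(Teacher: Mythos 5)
Your proof is correct and follows essentially the same strategy as the paper's: exactness from flatness of $R_{E/F}$ over $L_{E}$, and the subquotient condition by exhibiting the given object $W$ as a quotient of $q(V)$ for a suitable finite-dimensional $L_{E}$-subspace $V\subset W$ generating $W$ over $R_{E/F}$. The only difference is your choice of $V = \bigoplus_{(0)\le(a_{\sigma}/s)<(1)}W_{(a_{\sigma}/s)} \cong W/IW$ versus the paper's $L_{E}$-span $\tilde V$ of a chosen finite set of homogeneous generators; yours is if anything slightly cleaner, since the fundamental-domain slice is a union of full graded pieces and hence automatically $\Phi$-stable, whereas for the span of chosen generators one must still arrange (e.g.\ by enlarging the generating set inside the $\Phi^{s}$-eigenspaces) that $\tilde V$ is closed under $\Phi$, a point the paper leaves implicit.
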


\begin{proof} Since $R_{E/F}$ is a flat $L_{E}$-module, the functor $q$ is evidently exact. To show the other condition for quotient functors, let $(V, \Phi, \mathcal{D})$ be an object of $RP_{E/F,s}$, and let $\{e_{i}\}_{i=1}^{n}$ be a set of generators for $V$ as an $R_{E/F}$-module, which we may assume without loss of generality is such that $e_{i} \in V_{(a_{\sigma})_{\sigma}}$ (depending on $i$) on which $\Phi^{s}$ acts in the appropriate way. We then get an object of $P_{E/F,s}$ by taking the $L_{E}$-span of $\{e_{i}\}$, with the obvious inherited grading and $\Phi$-action; denote this object by $(\tilde{V}, \tilde{\Phi}, \tilde{\mathcal{D}})$. Then $R_{E/F} \otimes_{L_{E}} \tilde{V}$ is a free $R_{E/F}$-module, with basis $\{1 \otimes e_{i}\}$, and there is a quotient map $R_{E/F} \otimes_{L_{E}} \tilde{V} \to V$ sending $1 \otimes e_{i}$ to $e_{i}$, which is a morphism in $RP_{E/F,s}$, giving the result.
\end{proof}

In the notation of \cite[\S 2]{MilneQuot}, recall that $P_{E/F,s}^{q}$ denotes all objects of $P_{E/F,s}$ whose image under $q$ is isomorphic to a direct sum of copies of the identity object in $RP_{E/F, s}$, which is a full Tannakian subcategory over $E$. Observe that an object $(V, \Phi, \mc{D})$ in $P_{E/F,s}$ lies in $P_{E/F,s}^{q}$ if and only if the only nonzero graded components $V_{(a_{\sigma}/s)}$ are such that each $a_{\sigma}/s$ is an integer. According to \cite[\S 2]{MilneQuot}, the band of the gerbe corresponding to $RP_{E/F,s}$ is canonically isomorphic to all automorphisms of the fiber functor of $P_{E/F,s}$ which restrict trivially to $P_{E/F,s}^{q}$. In order to investigate how the band acts on the restriction of the fiber functor to these objects, it is easiest to use our identification of $P_{E/F,s}$ with $\text{Rep}_{L_{E}}(\mc{E}_{(c_{\sigma})_{\sigma}})$ from the previous subsection. We will work not with $P_{E/F,s}$, but instead with $P_{E/F,s,\ov{L}}$, which is the same category as $P_{E/F,s}$ except replacing $L_{E}$-vector spaces with $\ov{L}$-vector spaces and the semilinear $\mf{f}_{E}$-action with a semilinear $W_{\ov{F}/E}$-action that acts on an $\ov{L}$-basis of the graded pieces in the same way (after restricting to $\Gamma_{L_{E}/E}$).

Let $(x_{\sigma}) \in \prod_{\Gamma_{E/F}} \mathbb{G}_{m}(\ov{F})$, which acts on the canonical $\ov{F}$-fiber functor of $\text{Rep}_{\ov{F}}(\mc{E}_{(c_{\sigma}),\ov{F}})$ by, for a fixed representation $V$, mapping the $\prod_{\Gamma_{E/F}}\Z$-graded piece $V_{(h_{\sigma})}$ to itself via sending $v$ to $(\prod x_{\sigma}^{h_{\sigma}})v$. The image of the subcategory $P_{E/F,s,\ov{F}}^{q}$ under the above equivalence is all representations whose only nonzero graded pieces have each $h_{\sigma}$ divisible by $s$. These two observations make show that the subgroup of $\prod_{\Gamma_{E/F}} \mathbb{G}_{m}(\ov{F})$ fixing the image of $P_{E/F,s,\ov{F}}^{q}$ is exactly $\prod_{\Gamma_{E/F}} \mu_{s}(\ov{F})$, which implies (since we know that the band is an $E$-subgroup scheme of the band of $\mc{E}_{(c_{\sigma})_{\sigma}}$, which is $\prod_{\Gamma_{E/F}} \mathbb{G}_{m})$) that the band of the gerbe over $E$ corresponding to the Tannakian category $RP_{E/F,s}$ may be identified with the $E$-group scheme $\prod_{\Gamma_{E/F}} \mu_{s}$.

\subsection{Descent}\label{sec:Descent}
We want to descend the Tannakian category $RP_{E/F,s}$ to a Tannakian category over $F$. The first step is to define a canonical $\Gamma_{L_E/F}$-action on the ring $R_{E/F}$. 

Note that $L_E/L$ is a totally ramified extension and that the sequence:
\begin{equation*}
    1 \to \Gamma_{L_E/L} \to W_{L_E/F} \to W_{L/F} \to 1,
\end{equation*}
is split by any lift of the Frobenius element in $W_{L/F}$. Fix such a splitting and hence a lift which we denote $\mf{f}_F$. Restriction gives an injection $\Gamma_{L_E/L} \hookrightarrow \Gamma_{E/F}$. We denote the image of this injection by $\Omega_E$.

We now define a $\Gamma_{L_E/F}$-semilinear action of $W_{L_E/F}$ on $R_{E/F}$ which we denote $\widetilde{\Phi}_{R_{E/F}}$. We define the action of $\Gamma_{L_E/L}$ such that $\gamma(\ell_{(b_{\sigma})_{\sigma}}) = \gamma(\ell)_{(b_{\ov{\gamma}^{-1}\sigma})_{\sigma}}$, where $\ov{\gamma}$ is the projection of $\gamma$ to $\Gamma_{E/F}$. We define the action of $\mf{f}_F$ such that
\begin{equation*}
\mf{f}_F(\ell_{(b_{\sigma})_{\sigma}}) = ({\mf{f}_F(\ell)\prod\limits_{\sigma' \in \Omega_E} \sigma'(\varpi_E)^{b_{\ov{\mf{f}_F}^{-1}\sigma'}}})_{(b_{\ov{\mf{f}_F}^{-1}\sigma})_{\sigma}}.
\end{equation*}

In particular, this implies that 
\begin{equation*}
\mf{f}_F^{-1}(\ell_{(b_{\sigma})_{\sigma}}) = ({\mf{f}_F^{-1}(\ell)\prod\limits_{\sigma' \in \Omega_E} \mf{f}_{F}^{-1}\sigma'(\varpi_E)^{-b_{\sigma'}}})_{(b_{\ov{\mf{f}_F}\sigma})_{\sigma}},
\end{equation*}
which can be verified easily by checking that $\mf{f}_{F}^{-1} \circ \mf{f}_{F} = \text{id}$.

We need to check that this indeed induces an action of $W_{L_E/F}$. To do so, we need to check that for $\gamma \in \Gamma_{L_E/L}$, the action of $\mf{f}_F \circ \gamma \circ \mf{f}^{-1}_F \in \Gamma_{L_E/L}$ agrees with the action induced by the composition. Since both actions are semilinear in the correct way, we need only understand the action on $1_{(b_{\sigma})_{\sigma}}$. We have 
\begin{align*}
    (\mf{f}_F \circ \gamma \circ \mf{f}^{-1}_F)(1_{(b_{\sigma})_{\sigma}})  & =  (\mf{f}_F \circ \gamma)(\prod\limits_{\sigma' \in \Omega_E} \mf{f}^{-1}_F\sigma'(\varpi_E)^{-b_{\sigma'}})_{(b_{\ov{\mf{f}_F} \sigma})_{\sigma}})\\
    &=\mf{f}_F(\prod\limits_{\sigma' \in \Omega_E} \gamma\mf{f}^{-1}_F\sigma'(\varpi_E)^{-b_{\sigma'}})_{(b_{\ov{\mf{f}_F \gamma^{-1}} \sigma})_{\sigma}})\\
    &=(\prod\limits_{\sigma' \in \Omega_E} \mf{f}_F\gamma\mf{f}^{-1}_F\sigma'(\varpi_E)^{-b_{\sigma'}} \prod\limits_{\sigma'' \in \Omega_E} \sigma''(\varpi_E)^{b_{\ov{\mf{f}_F \gamma^{-1}\mf{f}^{-1}_F}\sigma''}}))_{(b_{\ov{\mf{f}_F \gamma^{-1} \mf{f}^{-1}_F} \sigma})_{\sigma}}\\
    &=1_{(b_{\ov{\mf{f}_F\gamma^{-1}\mf{f}^{-1}_F}\sigma})_{\sigma}},
\end{align*}
as desired.

The action does not depend on the choice of $\mf{f}_F$. Indeed, any other lift is of the form $\gamma \mf{f}_F$ for $\gamma \in \Gamma_{L_E/L}$ and we have 
\begin{equation*}
    \gamma \mf{f}_F(\ell_{(b_{\sigma})_{\sigma}}) = \gamma \mf{f}_F(\ell) \prod\limits_{\sigma' \in \Omega_E} \gamma\sigma'(\varpi_E)^{b_{\ov{\mf{f}^{-1}_F}\sigma'}}_{(b_{\ov{\mf{f}^{-1}_F\gamma^{-1}}\sigma})_{\sigma}} = \gamma\mf{f}_F(\ell) \prod\limits_{\sigma' \in \Omega_E} \sigma'(\varpi_E)^{b_{\ov{\mf{f}^{-1}_F\gamma^{-1}}\sigma'}}_{(b_{\ov{\mf{f}^{-1}_F\gamma^{-1}}\sigma})_{\sigma}},
\end{equation*}
which is the same action $\gamma\mf{f}_F$ would have if we had chosen it as our lift of Frobenius.

Finally, we need to compute the action of the canonical generator $\mf{f}_E$ of $W_{L_E/E} \subset W_{L_E/F}$. We can write $\mf{f}_E = \gamma \mf{f}^{f(E/F)}_F$ for some $\gamma \in \Gamma_{L_E/L}$. Then
\begin{align*}
    \gamma \mf{f}^{f(E/F)}_F(1_{(b_{\sigma})_{\sigma}})&=\gamma \left(\prod\limits_{k=1}^{f(E/F)} \prod\limits_{\sigma' \in \Omega_E} \mf{f}^{f(E/F) -k}_F\sigma'(\varpi_E)^{b_{\ov{\mf{f}^{-k}_F}\sigma}}_{(b_{\ov{\mf{f}^{-f(E/F)}_F}\sigma})_{\sigma}}\right)\\
    &=\gamma \left(\prod\limits^{f(E/F)}_{k=1} \prod\limits_{\sigma' \in \mf{f}^{-k}_F(\Omega_E)} \mf{f}^{f(E/F)}_F\sigma'(\varpi_E)^{b_{\sigma'}}_{(b_{\ov{\mf{f}^{-f(E/F)}_F}\sigma'})_{\sigma}}\right)\\
    &=\gamma\left(\prod\limits_{\sigma' \in \Gamma_{E/F}} \mf{f}^{f(E/F)}_F\sigma'(\varpi_E)^{b_{\sigma'}}_{(b_{\ov{\mf{f}^{-f(E/F)}_F}\sigma})_{\sigma}}\right)=\prod\limits_{\sigma' \in \Gamma_{E/F}} \sigma'(\varpi_E)^{b_{\sigma'}}_{(b_{\sigma})_{\sigma}}.
\end{align*} 

With this action in hand, we are ready to define a descent datum for our category $RP_{E/F,s}$. Fix a section $s_{E} \colon \Gamma_{E/F} \to W_{L_{E}/F}$, which determines a 2-cocycle of $\Gamma_{E/F}$ valued in $\Z$, denoted by $c_{E}$. For a fixed $\gamma \in \Gamma_{E/F}$, we define the functor $\beta_{\gamma} \colon RP_{E/F,s} \to RP_{E/F,s}$ to send the triple $(V, \Phi, \mc{D})$ to the triple $(\beta_{\gamma}(V), \beta_{\gamma}(\Phi), \beta_{\gamma}(\mc{D}))$ such that
\begin{itemize}
    \item  the underlying vector space $\beta_{\gamma}(V)$ is defined to be $V \otimes_{R_{E/F}} R_{E/F}$ where the map $R_{E/F} \to R_{E/F}$ is given by the action of $s_E(\gamma)$,
    \item the endomorphism $\beta_{\gamma}(\Phi)$ is defined by $\Phi \otimes ( s_E(\gamma) \circ \Phi_R)$,
    \item the grading $\beta_{\gamma}(\mc{D})$ on $\beta_{\gamma}(V)$ is such that $v_{(a_{\gamma \sigma}/s)_{\sigma})} \otimes r_{(b_{\sigma})_{\sigma}}$ is in the $((a_{\sigma} +sb_{\sigma})/s)_{\sigma}$ graded piece.
\end{itemize} 

A morphism $f: (V, \Phi, \mc{D}) \to (V', \Phi', \mc{D}')$  is mapped by $\beta_{\gamma}$ to $\beta_{\gamma}(f) : = f \otimes \id$.  This construction gives a $\gamma$-semilinear tensor equivalence from $RP_{E/F,s}$ to itself. For instance, for $e \in E$, we observe that
\begin{equation*}
    \beta_{\gamma}(e)(v \otimes r) = ev \otimes r=v\otimes \gamma(e)r=\gamma(e)(v \otimes r).
\end{equation*}

To finish giving our descent datum, we need to provide an isomorphism of functors $\mu_{\gamma',\gamma} \colon \beta_{\gamma'\gamma} \to \beta_{\gamma'} \circ \beta_{\gamma}$ for all pairs $(\gamma',\gamma)$ which satisfies a cocycle condition (explained in more detail later). The morphism $\mu_{\gamma',\gamma}(X)$ will be defined for any object $(V, \Phi, \mc{D})=X \in RP_{E/F,s}$ by the map
\begin{equation*}
    \mu_{\gamma', \gamma}(X): v \otimes r \mapsto \Phi^{-c_E(\gamma', \gamma)}(v) \otimes s_E(\gamma'\gamma')(r).
\end{equation*}

We check that this is a well-defined morphism in $RP_{E/F,s}$. First we note that for $v \in V$ and $r \in R_{E/F}$, we have $\mu_{\gamma', \gamma}(X)(v \otimes r)=\mu_{\gamma', \gamma}(rv \otimes 1)$. Indeed,
\begin{align*}
    \mu_{\gamma', \gamma}(X)(v \otimes r) & = \Phi^{-c_E(\gamma',\gamma)}(v) \otimes s_E(\gamma'\gamma)(r)  = \Phi^{-c_E(\gamma',\gamma)}(v) \otimes s_E(\gamma')s_E(\gamma) \Phi^{-c_E(\gamma', \gamma)}(r)\\
    & =  \Phi^{-c_E(\gamma', \gamma)}(rv) \otimes 1 =  \mu_{\gamma'\gamma}(X)(rv \otimes 1).
\end{align*}

Now, evidently $\mu_{\gamma', \gamma}(X)$ is $\Phi$-equivariant and preserves gradings, so it remains to show $R_{E/F}$-linearity. For any $v \in V$ and $r \in R_{E/F}$, we compute that 
\begin{equation*}
    \mu_{\gamma', \gamma}(X)( v \otimes r) = \Phi^{-c_E(\gamma', \gamma)}(v) \otimes s_E(\gamma',\gamma)(r) = r\cdot \mu_{\gamma'\gamma}(v \otimes 1).
\end{equation*}

We claim that the family of isomorphisms of functors $\mu_{\gamma', \gamma}$ satisfies the ``cocycle condition.'' Explicitly, for $\gamma'', \gamma', \gamma \in \Gamma_{E/F}$, we need to check that the following diagram commutes:
\[
\begin{tikzcd}
\beta_{\gamma''\gamma'\gamma}(V) \arrow["{(\mu_{{\gamma'',\gamma'\gamma}})_{V}}"]{rr} \arrow[swap, "{(\mu_{\gamma''\gamma',\gamma})_{V}}"]{dd} & & \beta_{\gamma''}(\beta_{\gamma'\gamma}(V)) \arrow["{\beta_{\gamma''}((\mu_{{\gamma',\gamma}})_{V})}"]{dd} \\
&&\\
\beta_{\gamma''\gamma'}(\beta_{\gamma}(V)) \arrow["{(\mu_{\gamma'',\gamma'})_{\beta_{\gamma}(V)}}"]{rr} && \beta_{\gamma''}(\beta_{\gamma'}(\beta_{\gamma}(V))).
\end{tikzcd}
\]

We can write any element in the above spaces in the form $v \otimes 1$. Then we note that on elements of this form, the top arrow is given by $\Phi^{-c_{E}(\gamma'',\gamma'\gamma)}$, the right arrow by $\Phi^{-c_{E}(\gamma',\gamma)}$, the left by $\Phi^{-c_{E}(\gamma''\gamma', \gamma)}$, and the bottom by $\Phi^{-c_{E}(\gamma'',\gamma')}$. The desired commutativity then follows from the fact that $c_{E}$ is a $2$-cocycle (keeping in mind that the action of $\Gamma_{E/F}$ on $\Z = W_{L_{E}/E}$ is trivial).

With this setup in hand, we get a Tannakian category over $F$, denoted by $R\mc{W}_{E/F,s}$. Explicitly, the objects of $R\mc{W}_{E/F, s}$ are tuples $(X, f_{\gamma}: X \to \beta_{\gamma}(X))$ where $X$ is an object of $RP_{E/F, s}$ and $f_{\gamma}$ is a morphism such that 
\begin{equation*}
    \mu_{\gamma', \gamma}(X) \circ f_{\gamma'\gamma} = \beta_{\gamma'}(f_{\gamma}) \circ f_{\gamma'}.
\end{equation*}
Morphisms in $R\mc{W}_{E/F, s}$ are morphisms in $RP_{E/F,s}$ that commute with the $f_{\gamma}$ maps. The band of $R\mc{W}_{E/F, s}$ is an $F$-group scheme whose base-change to $E$ is $\prod_{\Gamma_{E/F}} \mu_{s}$. In fact, we have the following claim.
\begin{claim}
The band of the gerbe associated to $R\mc{W}_{E/F,s}$ is $\Res_{E/F}(\mu_s)$.
\end{claim}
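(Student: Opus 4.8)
The plan is to read off the descent datum that the pair $(\beta_{\gamma}, \mu_{\gamma',\gamma})$ induces on the band $\prod_{\Gamma_{E/F}}\mu_{s}$ of the gerbe $\mc{E}_{RP_{E/F,s}}$ over $E$, and to recognize it as the canonical descent datum presenting the Weil restriction $\Res_{E/F}(\mu_{s})$ as an $F$-group scheme. First I would invoke the standard dictionary between Tannakian categories and gerbes (as in \cite[\S 2]{MilneQuot}): a descent datum on a Tannakian category over $E$ along $E/F$ induces one on its gerbe of fibre functors, hence a collection of isomorphisms of $E$-group schemes $b_{\gamma} \colon \prod_{\Gamma_{E/F}}\mu_{s} \xrightarrow{\ \sim\ } {}^{\gamma}\!\left(\prod_{\Gamma_{E/F}}\mu_{s}\right)$, $\gamma \in \Gamma_{E/F}$, satisfying the obvious cocycle identity up to the contribution of $\mu_{\gamma',\gamma}$; and by construction the band of $R\mc{W}_{E/F,s}$ is the fpqc descent of $\prod_{\Gamma_{E/F}}\mu_{s}$ along $(b_{\gamma})$. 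It then remains to show (i) that $\mu_{\gamma',\gamma}$ contributes nothing and the $b_{\gamma}$ assemble into an honest action of $\Gamma_{E/F}$, and (ii) that this action is the regular permutation of the factors.

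For (i): the transformation $\mu_{\gamma',\gamma}(X)$ is the power $\Phi^{-c_{E}(\gamma',\gamma)}$, a morphism internal to $RP_{E/F,s}$, so on automorphisms of the fibre functor it induces an \emph{inner} automorphism of the band; since $\prod_{\Gamma_{E/F}}\mu_{s}$ is commutative this is trivial. Hence $\gamma \mapsto b_{\gamma}$ is a genuine homomorphism from $\Gamma_{E/F}$ to the automorphism group scheme of $\prod_{\Gamma_{E/F}}\mu_{s}$ over $E$, and the band of $R\mc{W}_{E/F,s}$ carries no further cocycle twist (of which, for a commutative band, there is nothing to twist by in any case).

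For (ii): I would use the identification from \S\ref{sec:mod} of the band with the automorphisms of $\omega_{RP_{E/F,s}}$ that are trivial on $P_{E/F,s}^{q}$, under which a tuple $(x_{\sigma})_{\sigma}\in\prod_{\Gamma_{E/F}}\mu_{s}$ acts on $\omega_{RP_{E/F,s}}(V,\Phi,\mc{D}) = V/IV \cong \bigoplus_{(0)\leq(a_{\sigma}/s)<(1)}V_{(a_{\sigma}/s)}$ by the scalar $\prod_{\sigma}x_{\sigma}^{a_{\sigma}}$ on the $(a_{\sigma}/s)$-graded piece. The defining formula for $\beta_{\gamma}$ relabels the grading by left translation by $\gamma$ on the index set $\Gamma_{E/F}$ — this permutation is exactly the one appearing in the $\Gamma_{L_{E}/F}$-action on $R_{E/F}$ that $\beta_{\gamma}$ is built from, namely $\ell_{(b_{\sigma})_{\sigma}}\mapsto\ell_{(b_{\gamma^{-1}\sigma})_{\sigma}}$ up to the $\varpi_{E}$-factors, which do not affect the band. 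Conjugating $(x_{\sigma})_{\sigma}$ through the resulting natural isomorphism $\omega_{RP_{E/F,s}}\circ\beta_{\gamma}\cong{}^{\gamma}\!\omega_{RP_{E/F,s}}$ therefore produces the tuple $(x_{\gamma^{-1}\sigma})_{\sigma}$; that is, $b_{\gamma}$ is the left regular permutation of the $\Gamma_{E/F}$-indexed factors, acting trivially on each $\mu_{s}$ (which is defined over the prime field). Finally, this is precisely the canonical descent datum on $\prod_{\Gamma_{E/F}}(\mu_{s})_{E} = \Res_{E/F}(\mu_{s})\times_{F}E$ coming from the isomorphism $E\otimes_{F}E\xrightarrow{\ \sim\ }\prod_{\Gamma_{E/F}}E$, $a\otimes b\mapsto(a\,\gamma(b))_{\gamma}$, so fpqc descent identifies the band of $R\mc{W}_{E/F,s}$ with $\Res_{E/F}(\mu_{s})$.

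I expect the real work to be in step (ii): extracting from the definition of $\beta_{\gamma}$ — including the twist $V\otimes_{R_{E/F}}R_{E/F}$ of the module structure along $s_{E}(\gamma)$ — the natural isomorphism $\omega_{RP_{E/F,s}}\circ\beta_{\gamma}\cong{}^{\gamma}\!\omega_{RP_{E/F,s}}$ and verifying that it conjugates the band as claimed, with care taken over the direction of the permutation so that the descent is genuinely $\Res_{E/F}(\mu_{s})$ and not its opposite (which in this abelian case coincide, but the bookkeeping must still be done). The vanishing of inner automorphisms in (i) and the standard presentation of a Weil restriction by the regular permutation datum are then routine.
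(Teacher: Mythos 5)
Your proposal reaches the right conclusion and runs broadly parallel to the paper's proof (compute the induced descent datum on the band $\prod_{\Gamma_{E/F}}\mu_s$ and recognize it as the canonical Weil--restriction datum), but the route differs in two ways.

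First, the paper does not compute the descent directly on $R\mc{W}_{E/F,s}$. It instead extends the quotient functor $q$ to a quotient functor $\tilde{q}\colon \mc{W}_{E/F,s}\to R\mc{W}_{E/F,s}$ (where $\mc{W}_{E/F,s}$ is the descended category built from $P_{E/F,s}$, with band $\prod_{\Gamma_{E/F}}\Gm$), shows as in \S\ref{sec:mod} that the band of $R\mc{W}_{E/F,s}$ is the $s$-torsion of the band of $\mc{W}_{E/F,s}$, and then reduces to showing that the band of $\mc{W}_{E/F,s}$ is $\Res_{E/F}(\Gm)$. That larger case is then handled via the identification $P_{E/F,s}\simeq\Rep_{E_s}(\mc{E}_{(c_\sigma)_\sigma})$, which gives a concrete model for the automorphisms of the fiber functor. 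Your argument avoids the passage through $\mc{W}_{E/F,s}$ and works with $\mu_s$ directly; this is fine given the identification from \S\ref{sec:mod} of the band of $RP_{E/F,s}$ with $\prod_{\Gamma_{E/F}}\mu_s$ as automorphisms of $\omega_{RP_{E/F,s}}$ trivial on $P_{E/F,s}^q$, but it is the more bare-handed path, and the ``real work in (ii)'' you anticipate is precisely what the paper's $\Rep_{E_s}(\mc{E}_{(c_\sigma)_\sigma})$-model is designed to streamline.

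Second, in step (i) the paper does \emph{not} simply dismiss the $\mu_{\gamma',\gamma}$-contribution as inner: its explicit computation carries $\omega(\mu_{\gamma,\gamma^{-1}})$ through and produces the extra factor $\Phi^{c_E(\gamma,\gamma^{-1})}$ in the $\Gamma_{E/F}$-action on the $E_s$-points of the band, and it is only after restricting to $E$-points (where $\Phi$ acts trivially) and invoking unirationality/density of rational points in a torus that the $F$-form is pinned down as $\Res_{E/F}(\Gm)$. Your argument that the $\mu$-terms only modify the cocycle $(b_\gamma)$ by inner automorphisms of a commutative group (hence trivially) is conceptually correct, but as phrased it slides past the point that $\mu_{\gamma',\gamma}(X)$ is not an endomorphism of a single object but an isomorphism $\beta_{\gamma'\gamma}(X)\to\beta_{\gamma'}\beta_\gamma(X)$ between objects whose $L_E$-structures are twisted differently; what one actually checks is that the defect $b_{\gamma'}\circ{}^{\gamma'}b_\gamma\circ b_{\gamma'\gamma}^{-1}$ is $\Int(\lambda_{\gamma',\gamma})$ for $\lambda_{\gamma',\gamma}$ in the (commutative) band, so that $(b_\gamma)$ is an honest cocycle. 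That is the right justification, and the paper's $\Phi^{c_E(\gamma,\gamma^{-1})}$-term is exactly the shadow of this $\lambda$. The upshot: your argument is correct in substance, but the paper's more explicit route makes the $\mu$-bookkeeping visible rather than buried in the appeal to commutativity, and its reduction through $\mc{W}_{E/F,s}$ packages the verification more cleanly.
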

To make the verification of this claim easier, we note that we can define an analogous descent datum on the category $P_{E/F,s}$ defined in the identical way, except forgetting about any $R_{E/F}$-module structure (so for instance, $\beta_{\gamma}(V) = V \otimes_{L_E} L_E$ where the map $L_E \to L_E$ is the Galois action by $s_E(\gamma)$). This gives a Tannakian category $\mc{W}_{E/F,s}$ over $F$. Moreover, the quotient functor $q$ extends to a morphism $\tilde{q}$ of Tannakian categories with descent data; that is, for an object $(X, f_{\gamma})$ in $\mc{W}_{E/F,s}$, the object $(q(X), \tilde{q}(f_{\gamma}))$ lies in $R\mc{W}_{E/F,s}$. The map $\tilde{q}(f_{\gamma}): V \otimes R_{E/F} \to V \otimes R_{E/F} \otimes_{R_{E/F}} R_{E/F} =\beta_{\gamma}(V \otimes R_{E/F})$ is defined by $v \otimes r \mapsto f_{\gamma}(v) \otimes s_E(\gamma)^{-1}(r) \otimes 1$. This map is $R_{E/F}$-linear since for $r \in R_{E/F}$ and $v \in V$, we have
\begin{equation*}
    v \otimes r \mapsto f_{\gamma}(v) \otimes s_E(\gamma)^{-1}(r) \otimes 1 = f_{\gamma}(v) \otimes 1 \otimes r = r \cdot q(f_{\gamma})(v \otimes 1).
\end{equation*}

This extension of $q$ is again a quotient functor, and the same argument as in \S \ref{sec:mod} shows that the band of $R\mc{W}_{E/F,s}$ is given by the $s$-torsion subgroup-scheme of the $F$-group scheme giving the band of $\mc{W}_{E/F,s}$. It thus suffices to show that the latter band is the $F$-group scheme $\text{Res}_{E/F}(\mathbb{G}_{m})$.

Since we have identified the band of $P_{E/F,s}$ with the $F$-group scheme $\prod_{\Gamma_{E/F}} \mathbb{G}_{m}$ via the isomorphism of Tannakian categories over $E$ with the category $\text{Rep}_{E_{s}}(\mc{E}_{(c_{\sigma})_{\sigma}})$ constructed in \S \ref{sec:prod}, we will work with this latter category. Conjugating the functors $\beta_{\gamma}$ with this ($E$-linear) identification gives a descent datum on $\text{Rep}_{E_{s}}(\mc{E}_{(c_{\sigma})_{\sigma}})$. For $\gamma \in \Gamma_{E/F}$, this is given by taking the representation $(V, \rho)$ to the representation $(V', \rho')$ where $V'=V \otimes_{E_s} E_s$ such that the map $E_s \to E_s$ is given by the action of $s_{E}(\gamma)$ restricted to $E_{s}$ and $V'_{(h_{\sigma})_{\sigma}} = V_{(h_{\gamma \sigma})_{\sigma}}$. We define $\rho'$ such that the element $((1),\mf{f}_{E})$ acts by $\Phi \otimes \mf{f}_E$ where the action on the right is the Galois action, and an element $(a_{\sigma})_{\sigma}$ of the torus acts on $V_{(h_{\sigma})}$ by the scalar $\prod_{\sigma \in \Gamma_{E/F}} a_{\sigma}^{h_{\gamma^{-1}\sigma}}$. With the descent functors $\beta'_{\gamma}$ on $\text{Rep}_{E_{s}}(\mc{E}_{(c_{\sigma})_{\sigma}})$ in hand, we are ready to compute the $\Gamma_{E/F}$-action giving the band the structure of an $F$-group scheme. 

We know that the automorphisms of the canonical fiber functor $\omega$ are parametrized by $\prod_{\Gamma_{E/F}} \mathbb{G}_{m}(E_{s})$, where the tuple $(c_{\tau})$ acts on $V$ via scaling $V_{(h_{\sigma})}$ by $\prod\limits_{\tau \in \Gamma_{E/F}} c_{\tau}^{h_{\tau}}$. Starting with such an automorphism $\varphi$ and object $(V, \rho)$, we first apply $\beta_{\gamma^{-1}}$, obtaining $(V', \rho')$, and obtain an automorphism of $V'$ denoted $\varphi(\beta_{\gamma^{-1}}(V',\rho'))$. We then twist the $L_{E}$-action by $s(\gamma)^{-1}$, obtaining an automorphism $s(\gamma)^{-1}[\varphi(\beta_{\gamma^{-1}}(V',\rho'))]$ of the vector space $(V^{s(\gamma^{-1})^{-1}})^{s(\gamma)^{-1}}$, and then apply $\omega(\mu_{\gamma,\gamma^{-1}})$ to get an automorphism of $V = \omega((V,\rho))$, which we want to explicitly describe. The automorphism $\varphi(\beta_{\gamma^{-1}}(V',\rho'))$ scales $V_{(h_{\sigma})} = V'_{(h_{\gamma\sigma})}$ by $\prod_{\tau} c_{\tau}^{h_{\gamma \tau}}$ (using the twisted $L_{E}$-action), which means that $s(\gamma)^{-1}(\varphi(\beta_{\gamma^{-1}}(V',\rho')))$ scales $V_{(h_{\sigma})}$ by $s(\gamma) \cdot (\prod_{\tau} c_{\tau}^{h_{\gamma \tau}})$ (using the usual $L_{E}$-action), so that the new automorphism of our fiber functor is given by the tuple $(s(\gamma)\Phi^{c_{E}(\gamma,\gamma^{-1})}c_{\gamma^{-1}\tau})_{\tau}$. It follows that when $(c_{\tau})_{\tau} \in \prod_{\Gamma_{E/F}} \mathbb{G}_{m}(E)$, this action agrees with the usual $\Gamma_{E/F}$-action on $\text{Res}_{E/F}(\mathbb{G}_{m})(E)$. This implies that the $F$-group scheme structure of our band is given by $\text{Res}_{E/F}(\mathbb{G}_{m})$, as desired.

Finally, note that this identification of the bands of $\mc{W}_{E/F,s}$ and $R\mc{W}_{E/F,s}$ with these $F$-group schemes is compatible with our transition maps, in the sense that, at the level of bands, the functor $\iota_{s,s'}$ induces the $s'/s$-power map from $\text{Res}_{E/F}(\mathbb{G}_{m})$ to itself (and thus, the $s'/s$-power map from $\text{Res}_{E/F}(\mu_{s'})$ to $\text{Res}_{E/F}(\mu_{s})$). This fact follows from base-changing to $E$, where the analogous statement holds for the category $P_{E/F,s}$ (and thus for $RP_{E/F,s}$), as discussed in \S \ref{sec:prod}). If $(X, f_{\gamma})$ is an object of $\mc{W}_{E/F,s}$, then $(\iota(X), \iota(f_{\gamma}))$ is an object of $\mc{W}_{E/F,s'}$, similarly for $R\mc{W}_{E/F,s}$, and we define $\mc{W}_{E/F}$ and $R\mc{W}_{E/F}$ to be their direct limits, whose bands have an explicit description via the above observation.

\subsection{Concrete Tannakian categories}
We will now describe the Galois descent constructed in the previous subsection in terms of a more concrete Tannakian category over $F$.

First we define the category $\mc{I}_{E/F,s}$ for a fixed $s \in \mathbb{N}$; its objects are triples $(V, \tilde{\Phi},\mathcal{D})$ where $V$ is a finite dimensional $L_E$ vector space, $\tilde{\Phi}$ is a $\Gamma_{L_E/F}$-semilinear action of $W_{L_E/F}$ on $V$, and $\mathcal{D}$ is a $\prod_{\Gamma_{E/F}} \frac{1}{s}\mathbb{Z}$-grading of $V$, such that $V_{(a_{\sigma/s}){\sigma}}$ has an $L_{E}$-basis on which $\mf{f}_{E}^{s} \in W_{L_{E}/F}$ acts by $\prod\limits_{\sigma \in \Gamma_{E/F}} \sigma(\varpi_E)^{a_{\sigma}}$, and that for $x \in \Gamma_{L_E/F}$ lifting $\bar{x} \in \Gamma_{E/F}$, we have that $\tilde{\Phi}(x)(V_{(a_{\sigma})_{\sigma}}) = V_{(a_{\bar{x}^{-1}\sigma})_{\sigma}}$. The category $\mc{I}_{E/F,s}$ has a natural tensor structure defined such that $(V, \tilde{\Phi}) \otimes (V', \tilde{\Phi}') = (V \otimes V', \tilde{\Phi} \otimes \tilde{\Phi}')$ where $\tilde{\Phi} \times \tilde{\Phi'}$ is given explicitly by 
\begin{equation*}
    (\tilde{\Phi} \otimes \tilde{\Phi}')(\sigma)(v \otimes v') = \tilde{\Phi}(\sigma)(v) \otimes \tilde{\Phi}'(\sigma)(v').
\end{equation*}
The identity object $\mb{1}$ of $\mc{I}_{E/F}$ consists of the $L_E$ vector space $L_E$ with $\tilde{\Phi}$ equal to the normal Galois action. It is easy to check that the endomorphisms of $\mb{1}$ are in canonical bijection with $F$.

We now construct a functor $T: \mc{W}_{E/F,s} \to \mc{I}_{E/F,s}$. Given an object $(X, X \xrightarrow{f_{\gamma}} \beta_{\gamma}(X))$ of $\mc{W}_{E/F,s}$, we let the vector space $V$ in the $T$-image of $(X, X \xrightarrow{f_{\gamma}} \beta_{\gamma}(X))$ be the underlying vector space of $X$ and we let the grading $\mc{D}$ be the grading on $X$. We define $\tilde{\Phi}$ as follows. For any $\sigma \in W_{L_E/F}$ we can write uniquely $\sigma= \mf{f}_{E}^n s_E(\gamma)$ for some $\gamma \in \Gamma_{E/F}$. Then we define 
\begin{equation*}
    \tilde{\Phi}(\sigma)(v) =\Phi^n(f^{-1}_{\gamma}(v \otimes 1)).
\end{equation*}
We note that $f^{-1}_{\gamma}$ is $s_E(\gamma)$ semilinear on $V$ by definition.

To show that $\tilde{\Phi}$ is indeed a $\Gamma_{L_E/F}$-action, we need to verify for $\sigma_1, \sigma_2 \in \Gamma_{L_E/F}$ that $\tilde{\Phi}(\sigma_1) \circ \tilde{\Phi}(\sigma_2) = \tilde{\Phi}(\sigma_1\sigma_2)$. Explicitly, if $\sigma_i = \mf{f}_{E}^{n_i} s_E(\gamma_i)$, then we must show that
\begin{equation*}
    (\Phi^{n_1} \circ f^{-1}_{\gamma_1}) \circ (\Phi^{n_2} \circ f^{-1}_{\gamma_2}) = \Phi^{n_1+n_2 + c_E(\gamma_1,\gamma_2)}f^{-1}_{\gamma_1\gamma_2}. 
\end{equation*}
By construction, we have
\begin{equation*}
    f^{-1}_{\gamma_1\gamma_2} = f^{-1}_{\gamma_1} \circ f^{-1}_{\gamma_2} \circ \Phi^{-c_E(\gamma_1,\gamma_2)},
\end{equation*}
which implies the desired equality. 

We also need to check that $\tilde{\Phi}$ acts on the grading correctly. Let $\gamma \in \Gamma_{E/F}$. Then we have
\begin{equation*}
\tilde{\Phi}(s_E(\gamma))(V_{(a_{\sigma}/s)_{\sigma}}) = f^{-1}_{\gamma}(V_{(a_{\sigma}/s)_{\sigma}}\otimes 1)=f^{-1}_{\gamma}(\beta_{\gamma}(V)_{(a_{\gamma^{-1}\sigma}/s)_{\sigma}})=V_{(a_{\gamma^{-1}\sigma}/s)_{\sigma}}.
\end{equation*}
Finally, $T$ is a functor because morphisms of $\mc{W}_{E/F,s}$ must necessarily commute with the $\Phi$ and $f_{\gamma}$ maps.

We now define a functor $S: \mc{I}_{E/F, s} \to \mc{W}_{E/F, s}$, giving a quasi-inverse of $T$. Given an object $(V, \tilde{\Phi}, \mc{D})$ of $\mc{I}_{E/F, s}$, we define its image under $S$ to have underlying vector space equal to $V$ and $\prod\limits_{\Gamma_{E/F}} \frac{1}{s} \Z$-grading $\mc{D}$. We define $\Phi$ to be $\tilde{\Phi}(\mf{f}_E)$. Finally, for our fixed section $s_E: \Gamma_{E/F} \to W_{L_E/F}$, we define $f_{\gamma}: V \to \beta_{\gamma}(V)$ by
\begin{equation*}
    f_{\gamma}: v \mapsto \tilde{\Phi}(s_E(\gamma))^{-1}(v) \otimes 1.
\end{equation*}

There are many things we need to check in the above definition. First, we check that $f_{\gamma}$ is an isomorphism of $\mf{f}_{E}$-spaces from $V$ to $\beta_{\gamma}(V)$ for a fixed $\gamma$. It is evidently an isomorphism of abelian groups, and is $\mf{f}_{E}$-equivariant, since we have, for $v \in V$, the identity $$\tilde{\Phi}(s_{E}(\gamma))^{-1}(\mf{f}_{E} \cdot v) \otimes 1 = \tilde{\Phi}(s_{E}(\gamma)^{-1})(\tilde{\Phi}(\mf{f}_{E})(v)) \otimes 1 = \tilde\Phi(s_{E}(\gamma)^{-1}\mf{f}_{E})(v) \otimes 1 $$
$$= \tilde{\Phi}(\mf{f}_{E} s_{E}(\gamma)^{-1})(v) \otimes 1 = \mf{f}_{E} \cdot (\tilde{\Phi}(s_{E}(\gamma)^{-1})(v)) \otimes 1 = \mf{f}_{E} \cdot (\tilde{\Phi}(s_{E}(\gamma))^{-1}(v)) \otimes 1 ,$$ where we have used that $\mf{f}_{E}$ is contained in $Z(W_{L_{E}/F})$. We also have $L_{E}$-linearity due to the identity, for $\ell \in L_{E}$, given by $$\tilde{\Phi}(s_{E}(\gamma)^{-1})(v \otimes \ell) = s_{E}(\gamma)^{-1}(\ell)\tilde{\Phi}(s_{E}(\gamma)^{-1})(v) \otimes 1 = \tilde{\Phi}(s_{E}(\gamma)^{-1})(v) \otimes \ell.$$ 

We now check that the maps $f_{\gamma}$ satisfy the cocycle condition with respect to $\beta_{\gamma}$ and $\mu_{\gamma',\gamma}$ stated in the previous subsection. Recall that this condition is given by the equation
\begin{equation*}
    (\mu_{\gamma',\gamma})(X) \circ f_{\gamma'\gamma} = \beta_{\gamma'}(f_{\gamma}) \circ f_{\gamma'}
\end{equation*}
for any $X \in \mc{W}_{E/F,s}$. In our setting, this equation is 
$$\tilde{\Phi}(c_{E}(\gamma',\gamma)^{-1}) \circ \tilde{\Phi}(s_{E}(\gamma'\gamma))^{-1} = \tilde{\Phi}(s_{E}(\gamma))^{-1} \circ \tilde{\Phi}(s_{E}(\gamma'))^{-1},$$ which is trivial from the definition of the cocycle $c_{E}$, since $\tilde{\Phi}$ is a group action. It follows that the above functor gives a well-defined object of $\mc{W}_{E/F,s}$. Any morphism in $\mc{I}_{E/F,s}$ gives a morphism in $\mc{W}_{E/F,s}$, since morphisms in $\mc{I}_{E/F,s}$ preserve the grading by construction and commute with the $W_{L_{E}/F}$-action, and therefore with the maps $f_{\gamma}$. Both functors $S$ and $T$ are evidently $F$-linear tensor functors. We thus immediately obtain:

\begin{cor} The gerbe over $(\text{Sch}/F)_{\text{fpqc}}$ corresponding to the Tannakian category $\mc{I}_{E/F,s}$ over $F$ has band which is isomorphic as an $F$-group scheme to $\text{Res}_{E/F}(\mathbb{G}_{m}).$
\end{cor}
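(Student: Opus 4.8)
The plan is to deduce the corollary formally from the material just developed. The functors $S \colon \mc{I}_{E/F,s} \to \mc{W}_{E/F,s}$ and $T \colon \mc{W}_{E/F,s} \to \mc{I}_{E/F,s}$ constructed above are $F$-linear tensor functors; once one checks they are mutually quasi-inverse, $\mc{I}_{E/F,s}$ and $\mc{W}_{E/F,s}$ are equivalent as Tannakian categories over $F$, hence have the same associated gerbe of fiber functors over $(\mathrm{Sch}/F)_{\mathrm{fpqc}}$ and in particular the same band. Since that band was identified in \S\ref{sec:Descent} with the $F$-group scheme $\Res_{E/F}(\mathbb{G}_{m})$, the only real work is the verification that $S$ and $T$ are quasi-inverse.

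Before doing so I would normalize the descent datum: choosing the section $s_E$ with $s_E(\id) = \id \in W_{L_E/F}$ makes the cocycle $c_E$ vanish whenever one argument is $\id$, so $\beta_{\id}$ is canonically the identity functor and the descent axiom forces $f_{\id} = \id$ on every object of $\mc{W}_{E/F,s}$. Granting this, take $(X,(f_\gamma)) = ((V,\Phi,\mc{D}),(f_\gamma))$ in $\mc{W}_{E/F,s}$; running $T$ and then $S$ returns the triple with the same $L_E$-space $V$ and grading $\mc{D}$, with operator $\tilde\Phi(\mf{f}_E) = \Phi \circ f_{\id}^{-1}(-\otimes 1) = \Phi$, and with descent maps $v \mapsto \tilde\Phi(s_E(\gamma))^{-1}(v)\otimes 1$, which by the defining formula $\tilde\Phi(s_E(\gamma))(v) = f_\gamma^{-1}(v\otimes 1)$ (under the evident identification of the underlying group of $\beta_\gamma(V)$ with that of $V$) is precisely $f_\gamma$ again. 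Conversely, from $(V,\tilde\Phi,\mc{D})$ in $\mc{I}_{E/F,s}$ the functor $S$ produces $\Phi := \tilde\Phi(\mf{f}_E)$ and $f_\gamma(v) := \tilde\Phi(s_E(\gamma))^{-1}(v)\otimes 1$; then for an element written uniquely as $\mf{f}_E^n s_E(\gamma)$, the functor $T$ recovers $v\mapsto \Phi^n(f_\gamma^{-1}(v\otimes 1)) = \tilde\Phi(\mf{f}_E)^n\,\tilde\Phi(s_E(\gamma))(v) = \tilde\Phi(\mf{f}_E^n s_E(\gamma))(v)$, which is the original action. On morphisms both composites are the identity by inspection, since morphisms on either side are exactly the $W_{L_E/F}$-equivariant, grading-preserving $L_E$-linear maps (resp.\ those commuting with the $f_\gamma$), and the natural isomorphisms witnessing $S\circ T\cong \id$ and $T\circ S\cong \id$ may be taken to be the identity on underlying vector spaces, hence are automatically tensor-natural.

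With the equivalence established, the corollary is immediate as indicated above: the gerbe of fiber functors of $\mc{I}_{E/F,s}$ is equivalent to that of $\mc{W}_{E/F,s}$, whose band was shown in \S\ref{sec:Descent} to be $\Res_{E/F}(\mathbb{G}_{m})$.

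The main obstacle I anticipate is purely bookkeeping: one must keep straight the powers $\Phi^n$, the values of the $2$-cocycle $c_E$, and the twist of the $L_E$-module structure that enters through $\beta_\gamma$, and verify that the identifications of the underlying group of $\beta_\gamma(V)$ with that of $V$ are used consistently throughout. All of these are of the same nature as the compatibility checks already performed above — that $\tilde\Phi$ genuinely defines a $W_{L_E/F}$-action acting correctly on $\mc{D}$, and that the maps $f_\gamma$ produced by $S$ satisfy the cocycle condition — so no genuinely new idea is required.
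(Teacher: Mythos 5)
Your proof is correct and follows essentially the same route as the paper: establish that $S$ and $T$ are mutually quasi-inverse $F$-linear tensor functors, hence $\mc{I}_{E/F,s}$ and $\mc{W}_{E/F,s}$ have the same gerbe of fiber functors, and conclude from the band computation for $\mc{W}_{E/F,s}$ in \S\ref{sec:Descent}. The paper asserts the quasi-inverseness with less explicit verification (it simply announces $S$ as "a quasi-inverse of $T$"), so your unwinding of $S\circ T$ and $T\circ S$ and the normalization remark $s_E(\id)=\id$, $f_{\id}=\id$ supply detail the paper leaves to the reader, but they do not change the argument.
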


It is not hard to see how this generalizes to give a more concrete interpretation of the category $R\mc{W}_{E/F,s}$. Recall the $W_{L_{E}/F}$-action $\tilde{\Phi}_{R_{E/F}}$ on the ring $R_{E/F}$ defined in the previous subsection. We now define the category $R\mc{I}_{E/F,s}$, whose objects consist of tuples $(V, \mc{D},\tilde{\Phi})$ where $V$ is an $L_E$-vector space and $\mc{D}$ is a $\prod\limits_{\Gamma_{E/F}} \frac{1}{s}\Z$-grading on $V$. The graded vector space $V$ is equipped with an action of the graded ring $R_{E/F}$ and we require that $V$ is finitely generated as an $R_{E/F}$-module. Finally, $\tilde{\Phi}$ is a semilinear action of $W_{L_E/F}$ on $V$ such that $\tilde{\Phi}(\gamma)(rv)=\tilde{\Phi}_{R_{E/F}}(\gamma)(r) \tilde{\Phi}(\gamma)(v)$ for $r \in R_{E/F}$, and such that $V_{(b_{\sigma}/s)_{\sigma}}$ has an $L_E$-basis upon which $\tilde{\Phi}(\mf{f}^s_E)$ acts by scaling by $\prod\limits_{\sigma \in \Gamma_{E/F}} \sigma(\varpi_E)^{b_{\sigma}}$. To simplify notation, we will often denote $\tilde{\Phi}(\gamma)(v)$ simply by $\gamma \cdot v$ or $\gamma v$. The category $R\mc{I}_{E/F, s}$ is Tannakian over $F$ with identity object $R_{E/F}$. An identical argument to the one above proves that the functors $T$ and $S$ may be extended to inverse functors $\tilde{S}, \tilde{T}$ between $R\mc{I}_{E/F,s}$ and $R\mc{W}_{E/F,s}$, showing that the former is banded by the $F$-group scheme $\text{Res}_{E/F}(\mu_{s})$. By ``extended," we mean in the sense that the diagram of quotient functors
\[
\begin{tikzcd}
\mc{W}_{E/F,s} \arrow{d} \arrow{r} & \mc{I}_{E/F,s} \arrow{d} \\
R\mc{W}_{E/F,s} \arrow{r} & R \mc{I}_{E/F,s},
\end{tikzcd}
\]
where the left vertical arrow the quotient functor discussed in the previous subsection, and the right vertical arrow is the functor which sends the object $(V, \mc{D}, \tilde{\Phi})$ to the object $(V \otimes_{L_{E}} R_{E/F}, \mc{D}', \tilde{\Phi} \otimes \tilde{\Phi}_{R_{E/F}})$, where $\mc{D}'$ is the grading induced by tensoring the grading of $V$ with that of $R_{E/F}$. For $s'/s$, there is a clear transition functor from $\mc{I}_{E/F,s}$ to $\mc{I}_{E/F,s'}$, same for $R\mc{I}_{E/F,s}$, and one checks that the above equivalences with $\mc{W}_{E/F,s}$ and $R\mc{W}_{E/F,s}$ (which also have transition maps) are compatible as we vary $s$, giving an identification of $\mc{W}_{E/F}$ and $R\mc{W}_{E/F}$ with the direct limit of the systems categories, denoted by $\mc{I}_{E/F}$ and $R\mc{I}_{E/F}$, respectively. However, when constructing our final Tannakian category, we will use slightly more interesting transition maps than these (in particular, we will also change the extension $E/F$).

\subsection{Transition maps}
Consider a tower of finite Galois extensions $K/E/F$. Fix an anti-uniformizer $\varpi_E$ of $E$. Let $K' \subset K$ be the maximal unramified extension of $E$. Then $K/K'$ is totally ramified and we have our fixed anti-uniformizer $\varpi_K$ such that $N_{K/K'}(\varpi_K)=\varpi_E$. We define a transition functor $p_{K/E}: R\mc{I}_{E/F, s} \to R\mc{I}_{K/F, s}$ defined according to our choices of uniformizers. We pick a lift of the Frobenius of $F$ in the absolute Weil group $W_{\ov{F}/F}$, whose restriction we use in our definitions of the actions of $W_{L_{K}/F}$ and $W_{L_{E}/F}$ on $R_{K/F}$ and $R_{E/F}$, respectively. To begin, we consider the $\prod\limits_{\tau \in \Gamma_{K/F}} \Z$-graded ring $R'_{E/F, K}$ whose $(c_{\tau})_{\tau}$-graded piece is $0$ if the grading is not fixed under the $\Gamma_{K/E}$-action and is the $(c_{\ov{\tau}})_{\sigma}$-graded piece of $R_{E/F}$ otherwise. We then define $R_{E/F, K} := R'_{E/F, K} \otimes_{L_E} L_K$. We give $R_{E/F, K}$ the diagonal $W_{L_K/F}$-action whereby $W_{L_K/F}$ acts on the first factor by projection to $W_{L_E/F}$ and on the second factor by the natural Galois action. Note that we have an inclusion of graded rings 
\begin{equation*}
    R_{E/F, K} \hookrightarrow R_{K/F}.
\end{equation*}
Moreover, we claim this inclusion respects the $W_{L_K/F}$-actions. We need only check that the actions of $\mf{f}_{F,K}$ (our chosen lift of Frobenius) agree on $1_{(c_{\tau})_{\tau}}$ since the actions of $\Gamma_{L_K/L}$ both act by the natural Galois action and shift the gradings in the same way. We have 
$$
  \mf{f}_{F,K}(1_{(c_{\tau})_{\tau}}) = \prod\limits_{\tau' \in \Omega_{K}} \tau'(\varpi_K)^{c_{\mf{f}_{F,K}^{-1}\tau'}}_{(c_{\mf{f}_{F,K}^{-1}\tau})_{\tau}} = \prod\limits_{\sigma' \in \Gamma_{K'/(K \cap L)}} \prod\limits_{\tau'' \in \Gamma_{K/K'}} \sigma'\tau''(\varpi_K)^{c_{\mf{f}_{K,F}^{-1}\tau'}}_{(c_{\mf{f}_{K,F}^{-1}\tau})_{\tau}}$$
  $$= \prod\limits_{\sigma' \in \Gamma_{K'/(K \cap L)}} \sigma'(\varpi_E)^{c_{\mf{f}_{F,E}^{-1}\sigma'}}_{(c_{\mf{f}_{F,E}^{-1}\sigma})_{\sigma}} = \prod\limits_{\sigma' \in \Gamma_{E/(E \cap L)}} \sigma'(\varpi_E)^{c_{\mf{f}_{F,E}^{-1}\sigma'}}_{(c_{\mf{f}_{F,E}^{-1}\sigma})_{\sigma}} = [\mf{f}_{F,E}1_{(c_{\sigma})_{\sigma}}] \otimes 1 = \mf{f}_{F,K}(1_{(c_{\sigma})_{\sigma}} \otimes 1),$$

To obtain the fourth equality, we have used the fact that the restriction-to-$E$ map induces a surjection $\Gamma_{K'/(K \cap L)} \twoheadrightarrow \Gamma_{E/(E \cap L)}$, with kernel equal to $\Gamma_{K'/[E \cdot (K \cap L)]}$, where now $E \cdot (K \cap L) = K'$, since the maximal unramified subextension of $K/E$ is the compositum of $E$ and the maximal unramified subextension of $K/F$, by elementary algebraic number theory. The last equality follows from the fact that, by construction, the image of $\mf{f}_{F,K}$ in $W_{L_{E}/F}$ is $\mf{f}_{F,E}$.

We are now ready to define the functor $p_{K/E}$. For an object $M$ of $R\mc{I}_{E/F, s}$, we define $M_K$ to be the $\prod\limits_{\tau \in \Gamma_{K/F}} \frac{1}{s} \Z$-graded $R_{E/F, K}$-module whose $(c_{\tau})_{\tau}$-graded piece is $0$ if the grading is not fixed under the $\Gamma_{K/E}$-action and is the $(c_{\ov{\tau}})_{\ov{\tau}}$ graded piece of $M \otimes_{L_{E}} L_K$ otherwise. We then define $p_{K/E}(M) := M_K \otimes_{R_{E/F, K}} R_{K/F}$ (with the induced grading) and define the $W_{L_K/F}$-action diagonally, noting this is well-defined by the $W_{L_K/F}$-equivariance of $R_{E/F, K} \hookrightarrow R_{K/F}$.

Note also that there is a similar transition functor $p'_{K/E} \colon \mc{I}_{K/F,s} \to \mc{I}_{E/F,s}$, defined simply by sending the $W_{L_{E}/F}$-module $M$ to the $W_{L_{K}/F}$-module $M \otimes_{L_{E}} L_{K}$ with diagonal $W_{L_{K}/F}$-action induced on the first factor by projection to $W_{L_{E}/F}$ and grading given the same way as with the functor $p_{K/E}$. Note that, trivially, the transition functors $p'_{K/E}$ and $p_{K/E}$ are compatible with the quotient maps $\mc{I}_{?/F,s} \to R\mc{I}_{?/F,s}$, where $?=E$ or $K$. By abuse of notation, we will also use $p'_{K/E}$ to denote the functor from $P_{K/F,s}$ to $P_{E/F,s}$ given in the same way as $p'_{K/E}$, except now with diagonal $\Gamma_{L_{K}/K}$-action on $M \otimes L_{K}$ induced on the first factor via projecting into $\Gamma_{L_{E}/E}$. 

As usual, to describe what $p_{K/E}$ does on bands, we will first describe what $p'_{K/E}$ does, which in turn we will do via our identifications of $P_{E/F,s}$ and $P_{K/F,s}$ with the categories $\mc{E}_{(c_{\sigma})_{\sigma}}$ and $\mc{E}_{(d_{\sigma})_{\sigma}}$, respectively. Here, as in \S \ref{sec:prod}, the tuples $(c_{\sigma})_{\sigma}$ and $(d_{\sigma})_{\sigma}$ are the representatives of the anti-canonical class in $H^{2}(\Gamma_{E_{s}/E}, E_{s}^{\times})$ and $H^{2}(\Gamma_{K_{s}/K}, K_{s}^{\times})$, respectively corresponding to $\varpi_{E}$ and $\varpi_{K}$. Conjugating $p_{K/E}'$ by these equivalences gives a functor from $\text{Rep}_{E_{s}}(\mc{E}_{(c_{\sigma})_{\sigma}})$ to $\text{Rep}_{K_{s}}(\mc{E}_{(d_{\sigma})_{\sigma}})$, which we will explicitly compute. 

Recall that, given an $L_{E}$-representation $(V, \rho)$ of the group $\mc{E}_{(c_{\sigma})_{\sigma}}$, determined by the homomorphism $h_{\rho} \colon \prod_{\Gamma_{E/F}} \mathbb{G}_{m}(L_{E}) \to \mathrm{GL}(V)$ and the $\mf{f}_{E}$-semilinear automorphism induced by $((1),\mf{f}_{E})$, we obtain an object $(V', \Phi, \mc{D})$ of $P_{E/F,s}$ as in \S \ref{sec:prod}, with $V' = V \otimes_{E_{s}} L_{E}$, $\Phi$ given by the action $((1),\mf{f}_{E})$, and $V'_{(h_{\sigma}/s)} = V_{(h_{\sigma})} \otimes_{E_{s}} L_{E}$. Then, by definition, $p'_{K/E}$ sends this to the object $(V' \otimes_{L_{E}} L_{K}, \Phi', \mc{D}')$, where $\Phi'$ acts diagonally on $ V' \otimes_{L_{E}} L_{K}$ by $\Phi^{f(K/E)} \otimes \mf{f}_{K}$ and $V''_{(h_{\tau}/s)} = 0$ if $(h_{\tau}/s)$ is not $\Gamma_{K/E}$-fixed, and equals $V'_{(h_{\bar{\tau}}/s)} \otimes_{L_{E}} L_{K}$ otherwise. To compute the image of this object under the equivalence, we note that the subspace $$V''_{(h_{\tau}/s)} = V'_{(h_{\bar{\tau}}/s)} \otimes_{L_{E}} L_{K} = V_{(h_{\bar{\tau}})} \otimes_{E_{s}} L_{K} = (V_{(h_{\bar{\tau}})} \otimes_{E_{s}} K_{s}) \otimes_{K_{s}} L_{K}$$ has an $L_{K}$-basis given by $\{e_{i} \otimes 1 \otimes 1\}$, where $e_{i}$ is an $E_{s}$-basis of $V_{(h_{\bar{\tau}})}$ on which $\Phi^{s}$ acts by $\prod_{\sigma \in \Gamma_{E/F}} \sigma(\varpi_{E})^{h_{\sigma}}$. Thus, $$(\Phi')^{s}(e_{i} \otimes 1 \otimes 1) =[\prod_{\sigma \in \Gamma_{E/F}} \sigma(\varpi_{E})^{f(K/E)h_{\sigma}}] (e_{i} \otimes 1 \otimes 1),$$ which, by construction, equals $$[\prod_{\sigma \in \Gamma_{E/F}} \sigma(\prod_{\gamma \in \Gamma_{K/K'}} \gamma(\varpi_{K}))^{f(K/E)h_{\sigma}}](e_{i} \otimes 1 \otimes 1).$$ Finally, the above expression may be rewritten as $$[\prod_{\tau \in \Gamma_{K/F}} \tau(\varpi_{K})^{h_{\bar{\tau}}}] (e_{i} \otimes 1 \otimes 1),$$ showing that the corresponding representation of $\mc{E}_{(d_{\sigma})}$, denoted by $(W, \rho')$, has $W_{(h_{\tau})} = 0$ if $(h_{\tau})$ is not $\Gamma_{K/E}$-stable, and is $V_{(h_{\bar{\tau}})} \otimes_{E_{s}} K_{s}$ otherwise. Thus, if the automorphism of the fiber functor $\omega_{K}$ acts on $W_{(h'_{\tau})}$ via scaling by $\prod_{\tau \in \Gamma_{K/F}} a_{\tau}^{h'_{\tau}}$, (where $(a_{\tau}) \in \prod_{\Gamma_{K/F}} \mathbb{G}_{m}(K_{s})$), then the above calculation shows that we get an induced automorphism of $(\omega_{E})_{K}$, which acts on $V_{(h_{\sigma})} \otimes_{E_{s}} K_{s}$ by $\prod_{\sigma \in \Gamma_{E/F}} (\prod_{\tau \in \Gamma_{K/F}, \tau \mapsto \sigma} a_{\tau})^{h_{\sigma}}$, which shows that the induced map, on $K_{s}$-points, is given by the ``norm" map $\prod_{\Gamma_{K/F}} \mathbb{G}_{m}(K_{s}) \to \prod_{\Gamma_{E/F}} \mathbb{G}_{m}(K_{s})$. 

By construction, the $F$-group morphism on bands from $\text{Res}_{K/F}(\mathbb{G}_{m})$ to $\text{Res}_{E/F}(\mathbb{G}_{m})$ induced by $p'_{K/E}$ is given on $K_{s}$-points via the map described explicitly at the bottom of the previous paragraph; the same is true of the canonical ``norm map" $\text{Res}_{K/F}(\mathbb{G}_{m}) \to \text{Res}_{E/F}(\mathbb{G}_{m})$. Since both schemes are tori and $K_{s}$ is infinite, by unirationality this implies that the two maps coincide. The following diagram of transition maps commutes
\[
\begin{tikzcd}
\mc{I}_{E/F,s} \arrow{d} \arrow{r} & \mc{I}_{K/F,s} \arrow{d} \\
R\mc{I}_{E/F,s} \arrow{r} & R \mc{I}_{K/F,s},
\end{tikzcd}
\]
where the vertical arrows are the quotient maps discussed above, which means that the morphism of $F$-group schemes $\text{Res}_{K/F}(\mu_{s}) \to \text{Res}_{E/F}(\mu_{s})$ associated to $p_{K/E}$ is induced by the inclusion $\text{Res}_{K/F}(\mu_{s}) \hookrightarrow \text{Res}_{K/F}(\mathbb{G}_{m})$ followed by the map induced by $p'_{K/E}$, which implies that it is also given by the norm map. Finally, for $s \mid s'$, we define the transition map $p_{K/E,s',s} \colon \mc{I}_{E/F,s} \to \mc{I}_{K/F,s'}$ to be the composition 
$$\mc{I}_{E/F,s} \xrightarrow{p_{K/E}} \mc{I}_{K/F,s} \xrightarrow{\iota_{s,s'}} \mc{I}_{K/F,s'},$$ which on bands gives the map $\text{Res}_{K/F}(\mu_{s'}) \to \text{Res}_{E/F}(\mu_{s})$ of $F$-group schemes given by $N_{K/F}$ composed with the $s'/s$-power map.

\subsection{The category $\RigIsoc_{F}$}\label{sec:RigIsocdef}
We are now ready to define the Tannakian category over $F$ which parametrizes rigid inner forms. We define the category $\RigIsoc_{F}$ to be the direct limit of the categories $R\mc{I}_{E/F,s}$ over the inductive system of tuples $(E, s)$, where $E/F$ is a finite Galois extension and $s \in \mathbb{N}$, which is ordered by the relation that $(K,s') > (E,s)$ if and only if $K$ contains $E$ and $s \mid s'$, with respect to the transition maps $p_{K/E,s,s'}$ defined in the previous subsection. It will frequently be convenient to fix a cofinal system of objects in the indexing category indexed by $\mathbb{N}$, denoted by $(E_{k}, n_{k})$. Note that $\varinjlim_{k} R\mc{I}_{k} = \RigIsoc_{F}$, where $R\mc{I}_{k} := R\mc{I}_{E_{k}/F,n_{k}}$.

By construction, $\RigIsoc_{F}$ is a Tannakian category over $F$, with fiber functor over $\ov{L}$, and the corresponding gerbe is banded by the $F$-group scheme $u = \varprojlim_{k} \text{Res}_{E_{k}/F} (\mu_{n_{k}})$. Note that each category $R\mc{I}_{E/F,s}$ has a fiber functor over the finite extension $E_{s}/F$, given explicitly by sending $(V, \mc{D}, \Phi)$ to the (finite-dimensional) $E_{s}$-vector space 
\begin{equation}\label{keyfiberfunctor}
\bigoplus_{0 \leq (a_{\sigma}/s) < 1} ([\bigoplus_{(b_{\sigma}) \in \prod_{\Gamma_{E/F}} \Z} V^{\mf{f}_{E}^{s} \cdot (\prod \sigma(\varpi_{E})^{-[a_{\sigma}+sb_{\sigma}]})}] \otimes_{R_{E/F}} R_{E/F}/I),
\end{equation}
where $V^{\mf{f}_{E}^{s} \cdot (\prod \sigma(\varpi_{E})^{-[a_{\sigma}+sb_{\sigma}]})}$ denotes the subgroup of $V$ of all elements on which $\mf{f}_{E}^{s}$ acts by the scalar $\prod_{\sigma \in \Gamma_{E/F}}\sigma(\varpi_{E})^{-[a_{\sigma}+sb_{\sigma}]}$. Note that the above expression is well-defined, since the internal direct sum is $R_{E/F}$-stable. We may extend this to a fiber functor over $\ov{F}$ by simply post-composing the above fiber functor over $E_{s}$ with $- \otimes_{E_{s}} \ov{F}$. By considering the automorphisms of this fiber functor over $\ov{F}$, we get a canonical cohomology class in the group $H^{2}(\Gamma_{\ov{F}/F}, \text{Res}_{E/F}(\mu_{s})(\ov{F}))$, which we now compute. Since $R\mc{I}_{E/F,s}$ is a quotient of $\mc{I}_{E/F,s}$, the above cohomology class maps to the class corresponding to the group of automorphisms of the fiber functor $\mc{I}_{E/F,s} \to \text{Vect}_{\ov{F}}$ given explicitly by tensoring the $E_{s}$-fiber functor $$(V, \mc{D}, \tilde{\Phi}) \mapsto \bigoplus_{(a_{\sigma}/s) \in \prod_{\Gamma_{E/F}} \frac{1}{s}\Z} V^{\mf{f}_{E}^{s} \cdot(\prod \sigma(\varpi_{E})^{-a_{\sigma}})}$$ with $\ov{F}$ over $E_{s}$, under the canonical map $H^{2}(\Gamma_{\ov{F}/F}, \text{Res}_{E/F}(\mu_{s})(\ov{F})) \to H^{2}(\Gamma_{\ov{F}/F}, \text{Res}_{E/F}(\mathbb{G}_{m})(\ov{F}))$, noting that the above fiber functors are compatible with the quotient functor from $\mc{I}_{E/F,s}$ to $R\mc{I}_{E/F,s}$, and using \cite[\S 2.10]{MilneQuot}.

Because of the above observation, it will be useful to first describe the corresponding cohomology class in $H^{2}(\Gamma_{\ov{F}/F}, \text{Res}_{E/F}(\mathbb{G}_{m})(\ov{F}))$ corresponding to $\mc{I}_{E/F,s}$. Since the Tannakian category $\mc{I}_{E/F,s}$ over $F$ has base-change to $E$ which is isomorphic as Tannakian categories over $E$ to $P_{E/F,s}$, it follows that the image of this class in $H^{2}(E, \text{Res}_{E/F}(\mathbb{G}_{m})(\ov{F}))$ is the $\Gamma_{E/F}$-fold product of the classes $-1/s \in \Q/\Z$ in $\prod_{\Gamma_{E/F}} H^{2}(E, \mathbb{G}_{m}(\ov{F}))$. In particular, the image of this class under the Shapiro isomorphism $$H^{2}(\Gamma_{\ov{F}/F}, \text{Res}_{E/F}(\mathbb{G}_{m})(\ov{F})) \xrightarrow{\sim} H^{2}(\Gamma_{\ov{F}/E}, \mathbb{G}_{m}(\ov{F}))$$ is the class mapping to $-1/s \in \mathbb{Q}/\Z$ under the local invariant map.

Recall that $H^{2}(\Gamma_{\ov{F}/F}, u(\ov{F})) = \varprojlim_{k} H^{2}(\Gamma_{\ov{F}/F}, u_{k}(\ov{F}))$, where $u_{k} := \text{Res}_{E_{k}/F}(\mu_{n_{k}})$. Using Hilbert 90 and Shapiro's Lemma, we see that the latter groups fit into the exact sequence $$0 \to H^{2}(\Gamma_{\ov{F}/F}, u_{k}(\ov{F})) \to H^{2}(\Gamma_{\ov{F}/F}, \text{Res}_{E_{k}/F}(\mathbb{G}_{m})(\ov{F})) \xrightarrow{x \mapsto n_{k}x} H^{2}(\Gamma_{\ov{F}/F}, \text{Res}_{E_{k}/F}(\mathbb{G}_{m})(\ov{F})),$$ and so we get, via Shapiro's Lemma and the local invariant map, a canonical identification of the cohomology group $H^{2}(\Gamma_{\ov{F}/F}, u_{k}(\ov{F}))$ with $-\frac{1}{n_{k}}\Z/\Z$. Moreover, it follows from our above discussion that the gerbe corresponding to the Tannakian category $R\mc{I}_{k}$ has cohomology class equal to the element $-\frac{1}{n_{k}}$. For $l > k$, the projection map $H^{2}(\Gamma_{\ov{F}/F}, u_{l}(\ov{F})) \to H^{2}(\Gamma_{\ov{F}/F}, u_{k}(\ov{F}))$ corresponds (via the $\ov{F}$-fiber functor given by direct limit of the $\ov{F}$-fiber functors for each $R\mc{I}_{k}$, which are evidently compatible) to the multiplication-by-$n_{l}/n_{k}$ map from $\frac{1}{n_{l}}\Z/\Z$ to $\frac{1}{n_{k}}\Z/\Z$. We thus conclude that the category $\RigIsoc_{F}$ corresponds to the class $-1 \in \widehat{\Z} = \varprojlim_{k} \frac{1}{n_{k}}\Z/\Z = H^{2}(\Gamma_{\ov{F}/F}, u(\ov{F}))$, which is exactly what we want. All in all, we have proved:

\begin{thm}
    The Galois gerbe given associated to the gerbe of fiber functors of $\RigIsoc_{F}$ represents the class $-1 \in \widehat{\Z} \cong H^{2}(\Gamma_{\ov{F}/F},u(\ov{F}))$. In particular, we may identify $\RigIsoc_{F}$ with $\Rep(\mc{E}_{\Kal})$.
\end{thm}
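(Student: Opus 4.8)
The plan is to bundle together the cohomological computations already carried out in this section into a single statement about an inverse limit of classes, and then to appeal to the Tannakian formalism (in the form recalled in the introduction for $\mc{E}_{\iso}$) to pass from the gerbe of fiber functors of $\RigIsoc_F$ to the Galois gerbe $\mc{E}_{\Kal}$ itself.

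First I would recall that $\RigIsoc_F = \varinjlim_k R\mc{I}_k$ along the chosen cofinal system $(E_k,n_k)$ and that its band is $u = \varprojlim_k u_k$ with $u_k = \Res_{E_k/F}(\mu_{n_k})$, so that $H^{2}(\Gamma_{\ov F/F}, u(\ov F)) = \varprojlim_k H^{2}(\Gamma_{\ov F/F}, u_k(\ov F))$. Next, applying $\Res_{E_k/F}$ to the Kummer sequence, taking $\Gamma_{\ov F/F}$-cohomology of $\ov F$-points, and using Hilbert 90 together with Shapiro's Lemma to kill $H^{1}(\Gamma_{\ov F/F}, \Res_{E_k/F}(\Gm)(\ov F)) = H^{1}(\Gamma_{\ov F/E_k}, \ov F^{\times}) = 0$, one obtains the exact sequence displayed just above the theorem, which together with the local invariant map identifies $H^{2}(\Gamma_{\ov F/F}, u_k(\ov F))$ canonically with $\tfrac{1}{n_k}\Z/\Z$.

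Then I would pin down the class of each $R\mc{I}_k$. Since $R\mc{I}_{E/F,s}$ is a quotient of $\mc{I}_{E/F,s}$ in the sense of \cite{MilneQuot}, the class of its gerbe of fiber functors maps, under $H^{2}(\Gamma_{\ov F/F}, \Res_{E/F}(\mu_s)(\ov F)) \to H^{2}(\Gamma_{\ov F/F}, \Res_{E/F}(\Gm)(\ov F))$, to the class of $\mc{I}_{E/F,s}$; and the latter, after base change to $E$ and the identification with $P_{E/F,s} \simeq \Rep_{E_s}(\mc{E}_{(c_\sigma)_\sigma})$ from \S\ref{sec:prod} (Corollary~\ref{productgerbe}), is the class whose local invariant is $-1/s$ under Shapiro. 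Hence the class of $R\mc{I}_k$ is $-1/n_k \in \tfrac{1}{n_k}\Z/\Z$. Finally, the $\ov F$-fiber functors of \eqref{keyfiberfunctor} (tensored up to $\ov F$) are compatible across the directed system, and the transition functors $p_{K/E}$ and $\iota_{s,s'}$ act on bands by the norm map and the $s'/s$-power map, as computed in \S\ref{sec:Descent} and the transition-maps subsection; on $H^2$ this makes the projection $H^{2}(\Gamma_{\ov F/F}, u_l(\ov F)) \to H^{2}(\Gamma_{\ov F/F}, u_k(\ov F))$ into multiplication by $n_l/n_k$ from $\tfrac{1}{n_l}\Z/\Z$ to $\tfrac{1}{n_k}\Z/\Z$. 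Thus the classes $(-1/n_k)_k$ glue to $-1 \in \widehat\Z = \varprojlim_k \tfrac{1}{n_k}\Z/\Z = H^{2}(\Gamma_{\ov F/F}, u(\ov F))$.

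To conclude, $\mc{E}_{\Kal}$ is by construction the extension of $\Gamma_{\ov F/F}$ by $u(\ov F)$ attached to the class $-1 \in \widehat\Z$ (the normalization of \cite{Kalannals}, via the isomorphism $u \to u'$ of Remark~\ref{rem: Kalbbandvariant}); since a gerbe banded by a commutative $F$-group scheme is classified up to isomorphism by its class in $H^2$, the gerbe of fiber functors of $\RigIsoc_F$ is isomorphic to $[\mc{E}_{\Kal}]$, and Tannakian duality then yields a tensor equivalence $\RigIsoc_F \simeq \Rep(\mc{E}_{\Kal})$. The one genuinely delicate point is not any single cohomology computation but the coherence of all the identifications at once — the Shapiro isomorphisms, the normalization of the local invariant, the choices of sections made throughout, and above all the compatibility of the $\ov F$-fiber functors across the system — which must line up so that the inverse limit of the $-1/n_k$ is exactly $-1$ and not its opposite. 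This is bookkeeping of signs and normalizations rather than a deep obstacle, but it is where an error is most likely to hide.
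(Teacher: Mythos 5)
Your proposal reproduces the paper's argument essentially verbatim: identify $H^2(\Gamma_{\ov F/F}, u_k(\ov F))$ with $\tfrac{1}{n_k}\Z/\Z$ via Kummer, Hilbert 90, and Shapiro; pin down the class of $R\mc{I}_k$ by passing through the quotient functor from $\mc{I}_{E_k/F,n_k}$, base-changing to $E_k$ to recover $P_{E_k/F,n_k}$, and applying Corollary~\ref{productgerbe}; then check compatibility of the $\ov F$-fiber functors across the transition system so the classes $(-1/n_k)_k$ assemble to $-1 \in \widehat\Z$. This is exactly the chain of reasoning in \S\ref{sec:RigIsocdef}, and your closing remark rightly flags that the only genuine risk is a sign normalization slip somewhere in the Shapiro/local-invariant/anti-uniformizer bookkeeping.
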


\begin{rem}{\label{rem: Kalbbandvariant}}
    As noted in the introduction, we use an alternative presentation of the group scheme $u$ to the presentation $u'$ (a priori a different group) used in \cite{Kalannals}. Nevertheless, the map $u \to u'$ is an isomorphism. It suffices to show this at the level of character modules, where the map is the inclusion
    \begin{equation*}
        \varinjlim_{E/F} (\Q/\Z)[\Gamma_{E/F}]_{0} \to \varinjlim_{E/F} (\Q/\Z)[\Gamma_{E/F}],
    \end{equation*}
    where the direct limit is over all finite Galois extensions. Surjectivity comes from the fact that for a fixed $x \in \Q/\Z[\Gamma_{E/F}]$ which is $m$-torsion, if we take $K/E/F$ finite Galois with $m \mid [K \colon E]$ then the image of $x$ in $\Q/\Z[\Gamma_{K/F}]$ lies in $\Q/\Z[\Gamma_{K/F}]_{0}$. We warn that this result is false when $F = \mathbb{R}$.
\end{rem}

\section{The structure of $\RigIsoc_{F}$}

In this section we study the structure of $\RigIsoc_{F}$ as an abelian tensor category and construct examples of its objects.

\subsection{The Newton map}
Given an object $X = (V,\tilde{\Phi},\mc{D})$ in $R\mc{I}_{k}$, one can attach a homomorphism $f_{X} \in \text{Hom}_{\ov{F}}(u_{\ov{F}}, \GL(\omega_{k}(X))$ as follows: The group $\mathrm{Res}_{E_{k}/F}(\mu_{n_{k}})(\ov{F})$ is canonically identified with the group of automorphisms of the $\ov{F}$-fiber functor described in the previous subsection, which we will denote by $\omega_{k}$, and then $f_{X}$ is defined by sending $x \in \mathrm{Res}_{E_{k}/F}(\mu_{n_{k}})$ to the $\ov{F}$-linear automorphism of $\omega_{k}(X)$ corresponding to $x$.

\begin{lem}
The dimension of $\omega_{k}(X)$ is $\sum_{(a_{\sigma}/n_{k}) \in \prod_{\Gamma_{E_{k}/F}}[0,1)} \textnormal{dim}_{L_{E_{k}}}(V_{(a_{\sigma}/n_{k})})$.    
\end{lem}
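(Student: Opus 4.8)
The plan is to prove this by a dimension count, but rather than computing directly with the formula \eqref{keyfiberfunctor} defining $\omega_{k}$, I would exploit the fact that the \emph{rank} of $X$ (the trace of $\id_{X}$) in the rigid abelian tensor category $R\mc I_{k}=R\mc I_{E_{k}/F,n_{k}}$ is a well-defined nonnegative integer which \emph{every} fiber functor computes as its $\dim$. Thus it suffices to exhibit one fiber functor of $R\mc I_{k}$ whose value on $X$ has the dimension asserted on the right-hand side, and then to compare.

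First I would observe that, besides $\omega_{k}$, the category $R\mc I_{k}$ carries a fiber functor valued in $L_{E_{k}}$-vector spaces: compose the equivalence $R\mc I_{E_{k}/F,n_{k}}\xrightarrow{\sim}R\mc W_{E_{k}/F,n_{k}}$ with the functor $R\mc W_{E_{k}/F,n_{k}}\to RP_{E_{k}/F,n_{k}}$ forgetting the descent datum of \S\ref{sec:Descent}, and then with the fiber functor $\omega_{RP_{E_{k}/F,n_{k}}}$ of \S\ref{sec:mod}. Each of these three is exact, faithful and monoidal (for the forgetful functor this is because kernels, cokernels and tensor products in $R\mc W_{E_{k}/F,n_{k}}$ are computed on underlying $RP_{E_{k}/F,n_{k}}$-objects), so the composite is a fiber functor over $L_{E_{k}}$, and by construction it sends $X=(V,\tilde\Phi,\mc D)$ to $V/IV$. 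By the description of $IV$ recorded just before Lemma \ref{exactlem1} — it is precisely the kernel of the $L_{E_{k}}$-linear surjection $V\to\bigoplus_{0\le(a_{\sigma}/n_{k})<1}V_{(a_{\sigma}/n_{k})}$ given on $V_{([a_{\sigma}+n_{k}b_{\sigma}]/n_{k})}$ by multiplication by $1_{(-b_{\sigma})}$ — this yields an $L_{E_{k}}$-linear isomorphism $V/IV\cong\bigoplus_{0\le(a_{\sigma}/n_{k})<1}V_{(a_{\sigma}/n_{k})}$, whence $\dim_{L_{E_{k}}}(V/IV)=\sum_{(a_{\sigma}/n_{k})\in\prod_{\Gamma_{E_{k}/F}}[0,1)}\dim_{L_{E_{k}}}(V_{(a_{\sigma}/n_{k})})$.

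To conclude I would invoke that an exact (indeed merely additive) tensor functor between rigid abelian tensor categories preserves the trace of the identity morphism; applying this to $\omega_{k}$ and to the $L_{E_{k}}$-valued fiber functor just constructed shows that both $\dim\omega_{k}(X)$ and $\dim_{L_{E_{k}}}(V/IV)$ equal the categorical rank $\operatorname{tr}(\id_{X})\in\End(R_{E_{k}/F})=F$ of $X$, a nonnegative integer independent of the fiber functor; combined with the previous paragraph this gives the lemma.

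I do not expect a serious obstacle here; the one place to be slightly careful is checking in the first step that the composite really is a fiber functor — equivalently, that $X\mapsto V/IV$ defines an exact faithful tensor functor on $R\mc I_{k}$ — which follows from the corresponding statements for $\omega_{RP_{E_{k}/F,n_{k}}}$ in Lemma \ref{exactlem1} together with the descent formalism of \S\ref{sec:Descent}. One could alternatively match \eqref{keyfiberfunctor} with $V/IV$ residue-tuple by residue-tuple after base change to $L_{E_{k}}$, but bookkeeping the sign conventions in \eqref{keyfiberfunctor} makes the rank argument the cleaner route.
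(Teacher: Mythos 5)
Your argument is correct, but it takes a genuinely different route from the paper's. The paper's proof is a one-line direct computation with the explicit formula \eqref{keyfiberfunctor}: the internal direct sum over $(b_{\sigma})\in\prod_{\Gamma_{E_k/F}}\Z$ collapses after tensoring with $R_{E_k/F}/I$ (all the $(b_{\sigma})$-translates of a given residue class get identified), so the $E_{n_k}$-dimension of $\omega_k(X)$ is the sum of the $L_{E_k}$-dimensions of the graded pieces indexed by $\prod_{\Gamma_{E_k/F}}[0,1)$. Your proof instead sidesteps \eqref{keyfiberfunctor} entirely: you build a second fiber functor $X\mapsto V/IV$ valued in $\Vect_{L_{E_k}}$ by composing the equivalence $R\mc I_{E_k/F,n_k}\simeq R\mc W_{E_k/F,n_k}$ with the descent-forgetting functor and $\omega_{RP_{E_k/F,n_k}}$, read off $\dim_{L_{E_k}}(V/IV)$ from the description of $IV$ preceding Lemma \ref{exactlem1}, and then invoke that the categorical rank $\operatorname{tr}(\mathrm{id}_X)\in\End(\mathbf 1)=F$ is preserved by every fiber functor (so that $\dim\omega_k(X)$ and $\dim_{L_{E_k}}(V/IV)$ are both equal to it; characteristic zero ensures the integer is well determined). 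Both arguments are sound. The paper's is shorter and stays entirely inside one formula; yours avoids the exponent and uniformizer bookkeeping of \eqref{keyfiberfunctor} at the cost of introducing a second fiber functor and a rank-invariance principle, which also makes explicit a clean alternative description of a fiber functor on $R\mc I_k$. The only presentational point worth tightening is that ``the rank of $X$ is a nonnegative integer'' is not automatic for an abstract rigid abelian tensor category --- it becomes so here only because $R\mc I_k$ is Tannakian (admits a fiber functor), which you of course are using anyway.
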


\begin{proof}
This follows immediately from the explicit description in \eqref{keyfiberfunctor} of the functor $\omega_{R\mc{I}_{k}}$, noting that all the internal summands in \eqref{keyfiberfunctor} are identified upon taking the quotient by $IR_{E_{k}/F}$.
\end{proof}

It is straightforward to verify that the homomorphisms $f_{X}$ are compatible with the transition maps $R\mc{I}_{k} \to R\mc{I}_{l}$, and we thus can associate to an object $X$ in $\RigIsoc_{F}$ in the image of $R\mc{I}_{k}$ a morphism $f_{X} \in \text{Hom}_{\ov{F}}(u_{\ov{F}}, \GL(\omega_{k}(X)))$.

We say that an object $X$ of $\RigIsoc_{F}$ is \textit{basic} if the homomorphism $f_{X}$ is valued in $Z(\GL(\omega_{k}(X)))$. One can classify the basic objects as follows:

\begin{prop}\label{prop:Newton}
    An object $X$ of $\RigIsoc_{F}$ is basic if and only if all of the indices in $\mathbb{Q}[\Gamma_{\ov{F}/F}]$ of all its nonzero graded pieces have the same image in $\mathbb{Q}/\mathbb{Z}[\Gamma_{\ov{F}/F}]$. 
\end{prop}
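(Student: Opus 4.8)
The plan is to unwind both sides of the claimed equivalence in terms of the explicit fiber functor \eqref{keyfiberfunctor} and the resulting homomorphism $f_X$. First I would fix a representative $X=(V,\tilde\Phi,\mc D)$ in some $R\mc I_{k}$, and recall that $\omega_k(X)$ decomposes as a direct sum indexed by the classes $(a_\sigma/n_k)\in\prod_{\Gamma_{E_k/F}}[0,1)$, with the summand attached to $(a_\sigma/n_k)$ coming from the graded pieces $V_{([a_\sigma+n_k b_\sigma]/n_k)}$ for varying $(b_\sigma)$; these are all identified modulo $IR_{E_k/F}$ by the preceding lemma and its proof. The key point is that the identification of $\mathrm{Res}_{E_k/F}(\mu_{n_k})(\ov F)$ with $\Aut(\omega_k)$ is such that an element $x$, thought of via its coordinates as a tuple of $n_k$-th roots of unity indexed by $\Gamma_{E_k/F}$, acts on the summand of $\omega_k(X)$ labelled by $(a_\sigma/n_k)$ by the scalar $\prod_\sigma x_\sigma^{a_\sigma}$ (this is exactly the torus-action bookkeeping from \S\ref{sec:prod} and \S\ref{sec:mod}, transported through the descent and quotient functors). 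Thus $f_X(x)$ is simultaneously diagonalized by the decomposition into these summands, with eigenvalue on the $(a_\sigma/n_k)$-summand depending only on the class of $(a_\sigma/n_k)$ in $\prod_{\Gamma_{E_k/F}}\Q/\Z$.

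Granting this description, the proof becomes a short bookkeeping argument. The homomorphism $f_X$ lands in the center $Z(\GL(\omega_k(X)))$ — i.e.\ $X$ is basic — if and only if $f_X(x)$ is a scalar for every $x$, which (since the $f_X(x)$ are simultaneously diagonalized with eigenvalues indexed by the occurring classes $(a_\sigma/n_k)\bmod\Z$) happens if and only if for any two occurring classes $(a_\sigma/n_k)$, $(a'_\sigma/n_k)$ one has $\prod_\sigma x_\sigma^{a_\sigma}=\prod_\sigma x_\sigma^{a'_\sigma}$ for all choices of roots of unity $x_\sigma$. Because the $x_\sigma$ range independently over $\mu_{n_k}(\ov F)$ and $\mu_{n_k}(\ov F)$ is cyclic of order $n_k$, this forces $a_\sigma\equiv a'_\sigma\pmod{n_k}$ for every $\sigma$, i.e.\ the two classes coincide in $\prod_{\Gamma_{E_k/F}}\Q/\Z$. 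Conversely, if all occurring classes agree modulo $\Z$ in $\Q[\Gamma_{E_k/F}]$, then $f_X(x)$ acts by a single scalar on all of $\omega_k(X)$, so $f_X$ is central. Finally I would note that the indexing set $\Q[\Gamma_{E_k/F}]$ stabilizes to $\Q[\Gamma_{\ov F/F}]$ under the transition maps, and that the $f_X$ are compatible with these transitions (as already observed just before the proposition), so passing to the limit gives the statement for $\RigIsoc_F$ as stated, with ``same image in $\Q/\Z[\Gamma_{\ov F/F}]$'' being the limiting form of the condition.

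\textbf{Main obstacle.} The only genuinely delicate step is pinning down the precise formula for how an element $x\in\mathrm{Res}_{E_k/F}(\mu_{n_k})(\ov F)$, under the canonical identification with $\Aut(\omega_k)$, acts on each graded summand of $\omega_k(X)$ — in particular checking that the action depends only on the grading index modulo $n_k$ and that no shift or twist is introduced by the descent functor $\beta_\gamma$, the quotient functor $q$, or the passage from $\mf f_E$-semilinear data to $W_{L_E/F}$-semilinear data. This is a matter of carefully transporting the torus-weight computations already carried out in \S\ref{sec:prod} (where $(\Gm)_\tau$ acts on $V_{(h_\sigma)}$ by $x\mapsto x^{h_\tau}$) and \S\ref{sec:mod} (where $P^q_{E/F,s}$ is cut out by divisibility of the $h_\sigma$ by $s$, identifying the band as $\prod\mu_s$) through the constructions of \S\ref{sec:Descent}–\S\ref{sec:RigIsocdef}; once that is in hand the rest is the elementary cyclic-group argument above. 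I would present the weight computation as a lemma (or fold it into the proof) and keep the root-of-unity separation argument brief.
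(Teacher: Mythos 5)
Your proposal matches the paper's argument essentially line for line: reduce to the finite-level category $R\mc{I}_k$, observe that $x=(x_\sigma)\in\mathrm{Res}_{E_k/F}(\mu_{n_k})(\ov F)$ acts on the summand of $\omega_k(X)$ coming from $V_{(a_\sigma/n_k)}$ by the scalar $\prod_\sigma x_\sigma^{a_\sigma}$, and then note that $f_X$ is central precisely when these scalars agree across all occurring graded indices, which forces those indices to coincide modulo $\prod_{\Gamma_{E_k/F}}\Z$. The only difference is that you spell out the character-separation step (using independence of the $x_\sigma$ and cyclicity of $\mu_{n_k}$) a bit more explicitly than the paper does, which is a welcome clarification but not a different route.
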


\begin{proof}
It suffices to prove this for the finite-level Tannakian category $R\mc{I}_{k}$.

An element $x = (x_{\sigma}) \in  \prod_{\Gamma_{E_{k}/F}} \mu_{n_{k}}(\ov{F}) = \mathrm{Res}_{E_{k}/F}(\mu_{n_{k}})(\ov{F})$ acts on the fiber functor by sending $V_{(a_{\sigma}/n_{k})}$ to itself via multiplication by $\prod_{\sigma}x_{\sigma}^{a_{\sigma}}$. This gives for each graded piece $V_{(a_{\sigma}/n_{k})}$ a morphism from $u_{\ov{F}}$ to $Z(\GL_{V_{(a_{\sigma}/n_{k})}}) = \ov{F}^{\times}$, and the resulting map lies in $Z(\GL(\omega_{k}(X)))$ if and only if these maps agree for all $(a_{\sigma}/n_{k})_{\sigma}$. This last condition holds precisely when all tuples $(a_{\sigma}/n_{k})_{\sigma}$ with $V_{(a_{\sigma}/n_{k})} \neq 0$ are the same modulo $\prod_{\Gamma_{E_{k}/F}}\Z$.
\end{proof}

Denote by $\RigIsoc_{F,n}$ the full subcategory of objects whose image under the canonical fiber functor, temporarily denoted by $\omega$, given above (and thus any fiber functor) is $n$-dimensional; we can also consider the full subcategory $\RigIsoc_{F,n,\text{basic}}$ of basic objects. Recall that if $\mathcal{E}_{\text{Kal}}$ is the gerbe of fiber functors for $\RigIsoc_{F}$, then we have an equivalence of categories
\begin{equation}\label{gerbeequiv}
  \RigIsoc_{F,n} \ \xrightarrow{\sim} Z_{\text{alg}}^{1}(\mathcal{E}_{\text{Kal}}, \mathrm{GL}_{n}),
\end{equation}
and we observe that this functor is compatible with the maps from both objects to $\text{Hom}_{\ov{F}}(u_{\ov{F}},\mathrm{GL}_{n,\ov{F}})$. More explicitly, recall from \cite[Proposition 7.1]{Taibi25} that there is an equivalence of categories from the category of pairs $(R,t)$, where $R \colon \mathcal{E}_{\Kal} \to B\mathrm{GL}_{n}$ is a morphism of fibered categories over $F$ and $t$ is a trivialization of $R(\omega)$, to the category $Z_{\text{alg}}^{1}(\mathcal{E}_{\text{Kal}}, \mathrm{GL}_{n})$. The claimed equivalence \eqref{gerbeequiv} is given by composing this equivalence with the functor sending $X \in \Ob(\RigIsoc_{F,n})$ to the pair consisting of the functor $F \mapsto \underline{\text{Isom}}(F(X), \omega(X))$ and the trivialization of $\underline{\text{Isom}}(\omega(X), \omega(X))$ induced by our distinguished basis for the underlying vector space of $X = (V,\mathcal{D}, \Phi)$.

Note that \eqref{gerbeequiv} induces an equivalence of categories 
\begin{equation*}
  \RigIsoc_{F,n,\text{basic}} \xrightarrow{\sim} Z_{\bas}^{1}(\mathcal{E}_{\text{Kal}}, \mathrm{GL}_{n}).
\end{equation*}

\begin{cor}
    When $X \in \Ob(\RigIsoc_{F})$ is basic, the morphism $f_{X} \in \textnormal{Hom}_{\ov{F}}(u_{\ov{F}}, \GL(\omega_{k}(X)))$ descends to a morphism in $\textnormal{Hom}_{F}(u, \GL(\omega_{k}(X))).$
\end{cor}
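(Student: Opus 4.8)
The plan is to work at a fixed finite level $R\mc{I}_{k} := R\mc{I}_{E_{k}/F,n_{k}}$ containing $X$ and to unwind the explicit description of $f_{X}$ from the proof of Proposition~\ref{prop:Newton}. There, an element $(x_{\sigma})_{\sigma} \in \prod_{\Gamma_{E_{k}/F}}\mu_{n_{k}}(\ov{F}) = \mathrm{Res}_{E_{k}/F}(\mu_{n_{k}})(\ov{F})$ acts on the summand of $\omega_{k}(X)$ coming from $V_{(a_{\sigma}/n_{k})}$ by the scalar $\prod_{\sigma}x_{\sigma}^{a_{\sigma}}$. Hence, viewed as a morphism $u_{k,\ov{F}} \to \GL(\omega_{k}(X))$, $f_{X}$ is assembled from the characters $\chi_{(a_{\sigma})} \in X^{*}(u_{k,\ov{F}})$, one for each tuple $(a_{\sigma}/n_{k})$ with $V_{(a_{\sigma}/n_{k})} \neq 0$. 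Since $X$ is basic, Proposition~\ref{prop:Newton} tells us all of these tuples have the same image $\overline{d} = (\overline{d}_{\sigma})_{\sigma}$ in $\prod_{\Gamma_{E_{k}/F}}\tfrac{1}{n_{k}}\Z/\Z$, so $f_{X}$ is exactly the central morphism $u_{k,\ov{F}} \to \Gm_{\ov{F}} = Z(\GL(\omega_{k}(X)))_{\ov{F}}$ attached to the single character $\chi_{\overline{d}}$.

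The crucial step is that the descent structure $\tilde{\Phi}$ built into objects of $\mc{I}_{E_{k}/F,n_{k}}$ (hence of $R\mc{I}_{k}$) forces $\overline{d}$ to be a \emph{constant} tuple. Indeed, for $x \in W_{L_{E_{k}}/F}$ lifting $\bar{x} \in \Gamma_{E_{k}/F}$ the isomorphism $\tilde{\Phi}(x)$ carries $V_{(a_{\sigma})_{\sigma}}$ onto $V_{(a_{\bar{x}^{-1}\sigma})_{\sigma}}$; since $W_{L_{E_{k}}/F}$ surjects onto $\Gamma_{E_{k}/F}$ (inertia together with a Frobenius lift generate the latter) and each $\tilde{\Phi}(x)$ is invertible, the support of the grading is stable under the regular permutation action of $\Gamma_{E_{k}/F}$ on the index tuples. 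Modulo $\prod_{\Gamma_{E_{k}/F}}\Z$ this support is the single point $\overline{d}$, which is therefore fixed by $\Gamma_{E_{k}/F}$; as the action permutes the index positions transitively, $\overline{d}_{\sigma}$ is independent of $\sigma$. Writing $\overline{d}_{\sigma} = b/n_{k}$, the character $\chi_{\overline{d}}$ of $u_{k,\ov{F}}$ is $b$ times the ``norm'' character $\sum_{\iota}[\iota] \in \Z/n_{k}\Z[\Hom_{F}(E_{k},\ov{F})] = X^{*}(u_{k,\ov{F}})$, and the latter is precisely a generator of the subgroup of $\Gamma_{F}$-fixed elements (the Galois action permuting $\Hom_{F}(E_{k},\ov{F})$ transitively).

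To finish, I would invoke the standard anti-equivalence between $F$-group schemes of multiplicative type and discrete Galois modules: since $\chi_{\overline{d}}$ is $\Gamma_{F}$-fixed it is the base change of a morphism $u_{k} \to \Gm$ of $F$-group schemes, and precomposing with the canonical $F$-morphism $u \to u_{k}$, together with the canonical $F$-identification $\Gm = Z(\GL(\omega_{k}(X)))$, produces the element of $\Hom_{F}(u,\GL(\omega_{k}(X)))$ whose base change to $\ov{F}$ is $f_{X}$. Independence of the level $k$ is immediate from the compatibility of the maps $f_{X}$ (and of these descents) with the transition functors $R\mc{I}_{k} \to R\mc{I}_{l}$, already recorded in the construction of the Newton map. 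The only genuinely delicate point — and the one I expect to require the most care — is the assertion that $\tilde{\Phi}$ permutes the support of the grading compatibly with the full group $\Gamma_{E_{k}/F}$, together with the small argument that a single $\Gamma_{E_{k}/F}$-stable class in $\prod_{\Gamma_{E_{k}/F}}\tfrac{1}{n_{k}}\Z/\Z$ must be constant rather than merely stable as a subset; everything else is bookkeeping.
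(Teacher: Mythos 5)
Your proof is correct. You are carrying out exactly the ``direct verification'' that the paper mentions in the remark following the statement (``This result can also be verified directly using the explicit description of $f_X$ given above'') but does not spell out. The paper's own implicit proof, which justifies the word ``Corollary,'' is via the equivalence $\RigIsoc_{F,n,\text{basic}} \xrightarrow{\sim} Z^1_{\bas}(\mc{E}_{\Kal}, \GL_n)$ just recorded: under this equivalence $f_X$ corresponds to the restriction to $u$ of a basic algebraic $1$-cocycle $z_X$, and the cocycle relation forces $\prescript{\sigma}{}(z_X|_u) = \Int(z_X(w)) \circ (z_X|_u)$ for $w$ lifting $\sigma$; since $z_X|_u$ is valued in $Z(\GL_n)$, the inner twist is trivial and $z_X|_u$ is $\Gamma$-fixed. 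Your route instead unwinds the linear algebra. Both arguments are sound; yours is more elementary and self-contained, while the paper's works for arbitrary reductive groups without recomputing the grading action.

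Two minor points worth noting. First, you correctly identify the genuinely load-bearing steps: that $W_{L_{E_k}/F}$ surjects onto $\Gamma_{E_k/F}$ (so inertia plus Frobenius already exhaust the permutation group acting on tuple positions) and that a singleton orbit under a simply transitive action of $\Gamma_{E_k/F}$ on positions must be a constant tuple. These are exactly where the content is. Second, the permutation action you extract from $\tilde{\Phi}$ must be the same as the Galois action on $X^*(u_{k,\ov F}) = \Z/n_k\Z[\Gamma_{E_k/F}]$ for the descent conclusion to follow; this does hold because the $F$-group-scheme structure on the band was itself constructed from the $\tilde{\Phi}$-descent datum in \S\ref{sec:Descent}, so the two actions coincide by design, but it is worth making this identification explicit rather than implicit.
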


This result can also be verified directly using the explicit description of $f_{X}$ given above.

\subsection{Comparison with isocrystals}
We now construct a morphism of Tannakian categories over $F$ from $\Isoc_{F}$ to $\RigIsoc_{F}$. The category $\Isoc_{F,s}$ of isocrystals whose slopes are a multiple of $\frac{1}{s}$ is equivalent to the category $\mc{I}_{F/F, s}$. Indeed, we can define the equivalence by mapping the isocrystal $(V, \Phi)$ to  the space $(V, \tilde{\Phi}, \mc{D})$ where the grading $\mc{D}$ is defined such that $V_{a_{\sigma}/s}$ is the $-a_{\sigma}/s$ slope isotypic piece of $V$ and the $W_{L/F}$-action is given by $\Phi$.

Then for any isocrystal $(V, \Phi)$, we choose some sufficiently large $k \in \N$ such that $(V, \Phi) \in \Isoc_{F,n_k}$ and $(V, \tilde{\Phi}, \mc{D})$ is the corresponding object of $\mc{I}_{F/F, n_k}$. We then apply the quotient functor to obtain an object $R\mc{I}_{F/F, n_k}$ and finally the transition map $R\mc{I}_{F/F, n_k} \to R\mc{I}_{E_k/F, n_k}$.

If we restrict ourselves to isocrystals with slopes of denominator dividing $n_{k}$ for some fixed $k$, then we see that, on bands, the above functor is the composition 
\begin{equation}\label{bandcomp}
\text{Res}_{E_{k}/F}(\mu_{n_{k}}) \xrightarrow{-N_{E_{k}/F}} \mu_{n_{k}} \hookrightarrow \mathbb{G}_{m},
\end{equation}
which, after taking inverse limits, implies that the above morphism of Tannakian categories over $F$ corresponds to the composition $$u \to \mu \to \mathbb{D},$$ where the first map is given by the inverse limit of the negative norm maps. 

Recall that \cite[\S 3.3]{Kaletha18} constructs a homomorphism $\mc{E}_{\Kal}' \to \mc{E}_{\text{iso}}$ for any choice of gerbe $\mc{E}_{\Kal}'$ representing the canonical class $-1 \in H^{2}(F,u)$ which is compatible with the projections to $\Gamma_{\ov{F}/F}$ and agrees with the map \eqref{bandcomp} on bands. Since the choice of $\mc{E}_{\Kal}'$ is only canonical up to conjugation by $u$, any such map is only unique up to post-composing by the automorphisms of $\mc{E}_{\text{iso}}$ induced by $\Int(x)$ for $x \in \mathbb{D}(\ov{F})$. In fact, all maps satisfying these two properties differ in this manner, since $H^{1}(F, \mathbb{D})$ is trivial. In particular, the morphism of gerbes $\mc{E}_{\Kal} \to \mc{E}_{\text{iso}}$ induced by our functor $\Isoc_{F} \to \RigIsoc_{F}$ is such a morphism.

\subsection{Comparison with Fargues' extended isocrystals} 
We first briefly recall the Tannakian category $\text{Isoc}_{F}^{e}$ constructed by Fargues in \cite[\S 6]{Fargues} corresponding to the gerbe $\mc{E}_{\text{Kal}} \times \mc{E}_{\text{iso}}$:

First, one picks a splitting of the short exact sequence
\begin{equation*}
0 \to \varinjlim \prod_{\Gamma_{K/F}} \mathbb{Z} \to \varinjlim \prod_{\Gamma_{K/F}} \Q \to \varinjlim \prod_{\Gamma_{K/F}}, \mathbb{Q}/\mathbb{Z} \to 0,
\end{equation*}
where the direct limits are over all finite Galois extensions $K/F$, with corresponding $2$-cocycle $c$. We will take the splitting to be the unique one valued in $[0,1)$, such that $c$ takes values in $\varinjlim \prod_{\Gamma_{K/F}}\{0,1\}$.

As in \cite[\S 6.2]{Fargues}, one constructs, for a fixed finite Galois $E/F$, a canonical rank one, slope one $E$-isocrystal $\mathbb{L}_{E}$. We will not need the precise definition of this isocrystal (it is built using the Fargues--Fontaine curve)---the only property needed for the construction of $\Isoc^{e}_{F}$ is that for each $K/E$ a finite Galois extension one has that $N_{K/E}(\mathbb{L}_{K}) := \bigotimes_{\tau \in \Gamma_{K/E}} \mathbb{L}_{K} \otimes_{L_{K}, \tilde{\tau}} L_{K}$, where $\tilde{\tau}$ is some lift of $\tau$ to $\Gamma_{L_{K}/E}$, equals $\mathbb{L}_{E}$. Our norm-compatible system of (anti-)uniformizers evidently furnishes us with such a system by setting $\mathbb{L}_{E}$ to the vector space $L_{E}$ with the Frobenius of $E$ acting on $1$ by $\varpi_{E}^{-1}$. 

For a fixed function $\Gamma_{E/F} \xrightarrow{\alpha} \mathbb{Z}$, we define the $E$-isocrystal
\begin{equation*}
\mathbb{L}_{\infty}^{\alpha} := \bigotimes_{\tau \in \Gamma_{E/F}} \mathbb{L}_{E}^{\otimes\alpha(\tau)} \otimes_{L_{E},\tilde{\tau}} L_{E},
\end{equation*}
where $\tilde{\tau}$ is some lift of $\tau$ to $\Gamma_{L_{E}/F}$. By construction, this is a finite dimensional discrete, semilinear $W_{E}$-module over $\ov{L}$.

The Tannakian category $\text{Isoc}^{e}$ has objects given by $\varinjlim \prod_{\Gamma_{K/F}}, \mathbb{Q}/\mathbb{Z}$-graded finite-dimensional $\ov{L}$-vector spaces 
\begin{equation*}
D = \bigoplus_{\alpha \in  \varinjlim \prod_{\Gamma_{K/F}}, \mathbb{Q}/\mathbb{Z}} D_{\alpha}
\end{equation*}
equipped with a discrete, semilinear $W_{F}$-action such that acting by $\tau \in W_{F}$ is an isomorphism from $D_{\alpha}$ to $D_{\prescript{\tau}{}\alpha}$, with tensor product given by 
\begin{equation*}
(D \otimes D')_{\alpha} = \bigoplus_{\beta + \gamma = \alpha} D_{\beta} \otimes_{\ov{F}} D'_{\gamma} \otimes_{\ov{L}} \mathbb{L}_{\infty}^{c(\beta,\gamma)},
\end{equation*}
where $W_{F}$ acts on $\mathbb{L}_{\infty}^{c(\beta,\gamma)}$ by mapping it to $(\mathbb{L}_{\infty}^{c(\beta,\gamma)})^{\tau} \simeq \mathbb{L}_{\infty}^{\prescript{\tau}{}c(\beta,\gamma)}$ via the map $ \mathbb{L}_{\infty}^{\alpha} \to \mathbb{L}_{\infty}^{\alpha} \otimes_{L_{E},\sigma} \ov{L}$, $x \mapsto x \otimes 1$. We will denote all of the above data by $D = (V_{D}, \Phi_{D},\mc{D}_{D},)$ (the underlying vector space, the $W_{F}$-action, and the grading, respectively). The morphisms in $\text{Isoc}^{e}$ are morphisms of graded $W_{F}$-modules.

The goal is to define a morphism of Tannakian categories 
\begin{equation*}
   \RigIsoc_{F} \to \text{Isoc}_{F}^{e}
\end{equation*}
corresponding (possibly up to twisting by a coboundary) to the projection map $\mc{E}_{\text{Kal}} \times \mc{E}_{\text{iso}} \to \mc{E}_{\text{Kal}}$.

At the level of objects, this functor is the exact functor given by sending $(V,\tilde{\Phi},\mc{D})$ to $(V_{[0,1)},\tilde{\Phi},\ov{\mc{D}})$, where $V_{[0,1)}$ denotes the direct sum of the graded pieces of $V$ all of whose entries lie in $\varinjlim \prod_{\Gamma_{K/F}} [0,1)$, which is preserved by the $W_{F}$-action $\tilde{\Phi}$, and $\ov{\mc{D}}$ is the $\varinjlim \prod_{\Gamma_{K/F}}, \mathbb{Q}/\mathbb{Z}$ grading induced by the $\varinjlim \prod_{\Gamma_{K/F}} \mathbb{Q}$-grading. On morphisms the functor sends $f$ to $\ov{f}$ (this makes sense because $f$ is $R_{\ov{F}/F}$-equivariant).

It remains to justify that this functor preserves the tensor structure. To check tensor compatibility we first compute that (for fixed $E/F$ finite Galois) and $(a_{\sigma})_{\sigma}$, $(b_{\sigma})_{\sigma}$ in $\prod_{\Gamma_{E/F}} [0,1)$, denoting by $\tilde{\otimes}$ the tensor product in $\text{Isoc}^{e}$:
\begin{equation*}
V_{(a_{\sigma})} \tilde{\otimes} W_{(b_{\sigma})} = (V_{(a_{\sigma})} \otimes W_{(b_{\sigma})}) \otimes \mathbb{L}_{\infty}^{c(\ov{(a_{\sigma})}, \ov{(b_{\sigma})})} \subseteq (V \otimes W)_{(a_{\sigma} + b_{\sigma})}
\end{equation*}
where $\mathbb{L}^{c(\ov{(a_{\sigma})}, \ov{(b_{\sigma})})}_{\infty}$
is the tensor product of each $\mathbb{L}_{E} \otimes_{L_{E}, \tilde{\sigma}} L_{E}$ such that $a_{\sigma} + b_{\sigma} \geq 1$. On the other hand, first note that for any graded component $U_{(c_{\sigma})}$ that
\begin{equation*}
U_{(c_{\tau})} \otimes (\mathbb{L}_{E} \otimes_{L_{E}, \tilde{\sigma}} L_{E}) = 1_{\delta_{\sigma}(-1)} \cdot U_{(c_{\tau})},
\end{equation*}
where $\delta_{\sigma}(-1)$ denotes the element of $\prod_{\Gamma_{E/F}}\Z$ with $-1$ in the $\sigma$-coordinate and zeros elsewhere such that  $1_{\delta_{\sigma}(-1)} \in R_{E/F}$. Using the above two computations, we find (using that the tensor product $V \otimes W$ in $\RigIsoc_{F}$ is a tensor product of $R_{E/F}$-modules) that
\begin{equation*}
    ([1_{(-c(\ov{(a_{\sigma})}, \ov{(b_{\sigma})}))_{\sigma}} \cdot V_{(a_{\sigma})}] \otimes W_{(b_{\sigma})}) = (V_{(a_{\sigma})} \otimes [1_{(-c(\ov{(a_{\sigma})}, \ov{(b_{\sigma})}))_{\sigma}} \cdot W_{(b_{\sigma})}])  \subseteq (V \otimes W)_{[0,1)}
\end{equation*}
and equals $V_{(a_{\sigma})} \tilde{\otimes} W_{(b_{\sigma})}$. From here, the desired tensor compatibility follows easily. 
 
We conclude this subsection by studying the map on gerbes $\mc{E}_{\Kal} \times \mc{E}_{\text{iso}} \to \mc{E}_{\Kal}$ corresponding to this morphism of Tannakian categories $\RigIsoc_{F} \to \Isoc_{F}^{e}$.

A fiber functor of $\Isoc_{F}^{e}$ is given, at a fixed finite $K/F$-level, by the following composition \footnote{We warn the reader that the first functor is not a tensor functor, but when composed with the second functor it is.}
\begin{equation*}
    \Isoc_{F,K}^{e} \to \Vect_{\ov{F}} \to \text{$R_{K/F}$-mod} \to \Vect_{\ov{F}},
\end{equation*}
where $\Isoc_{F,K}^{e}$ denotes the full subcategory of $\Isoc_{F}^{e}$ where the grading factors through $\prod_{\Gamma_{K/F}} \Q/\Z$, the first functor forgets the $\varinjlim \prod_{\Gamma_{K/F}} \Q/\Z$-grading and $W_{F}$-action, the second functor sends $Y$ to $Y \otimes_{L_{K}} R_{K/F}$, and the third functor is the fiber functor on finitely-generated $R_{K/F}$-modules from \S \ref{sec:mod}. We deduce that the automorphism group this fiber functor, identified with $u_{\ov{F}} \times \mathbb{D}_{\ov{F}}$, acts on the image of the $\prod_{\Gamma_{K/F}} \Q/\Z$-graded piece $V_{(c_{\sigma}/s)_{\sigma}}$ of an object $X$ of $\Isoc_{F}^{e}$ according to the $\prod_{\Gamma_{K/F}} \Q/\Z$-grading via the $u_{\ov{F}}$-factor, showing that, at the level of bands, the morphism of gerbes $\mc{E}_{\Kal} \times \mc{E}_{\text{iso}} \to \mc{E}_{\Kal}$ is the projection map. This lets us deduce that this morphism of gerbes agrees with the projection map $\mc{E}_{\Kal} \times \mc{E}_{\text{iso}} \to \mc{E}_{\Kal}$ up to possibly composing with $\Int(a)$ for $a \in u(\ov{F}) \times \mathbb{D}(\ov{F})$---we suspect that this in fact equals the projection map, but have not verified this. Note that the equality up to $\Int(a)$ already implies that the pullback map $H^{1}_{\text{\'{e}t}}(\mc{E}_{\Kal}, G) \to H^{1}_{\text{\'{e}t}}(\mc{E}_{\Kal} \times \mc{E}_{\text{iso}},G)$ agrees with pullback by the projection map.

\subsection{Reduction to Newton centralizers}
We start with a basic structural result about $\RigIsoc_{F}$:
\begin{lem}
    The category $\RigIsoc_{F}$ is semisimple.
\end{lem}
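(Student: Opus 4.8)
The plan is to reduce to the finite levels and there use that the band is a finite group scheme of multiplicative type. Recall that $\RigIsoc_{F} = \varinjlim_{k} R\mc{I}_{k}$ is a filtered colimit of the Tannakian categories $R\mc{I}_{k} = R\mc{I}_{E_{k}/F, n_{k}}$ along transition functors which are exact (being compositions of flat base changes and the fully faithful embeddings $\iota_{s,s'}$). A filtered colimit of semisimple abelian categories along exact functors is again semisimple: if $Y \hookrightarrow X$ is a subobject in $\RigIsoc_{F}$, then $X$ and the inclusion are already represented at some finite level $k$, and since the transition functors are exact a morphism that becomes a monomorphism in the colimit is already one at some finite stage, so after enlarging $k$ the map $Y_{k} \to X_{k}$ is a monomorphism in $R\mc{I}_{k}$; a retraction at level $k$ (which exists by semisimplicity of $R\mc{I}_{k}$) pushes forward to a retraction in $\RigIsoc_{F}$, exhibiting $Y$ as a direct summand of $X$. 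Hence it suffices to show that each $R\mc{I}_{k}$ is semisimple.

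For this I would base change to $\ov{F}$. We have identified $R\mc{I}_{k}$ with $\Rep(\mc{E}_{k})$ for $\mc{E}_{k}$ the gerbe banded by $u_{k} = \Res_{E_{k}/F}(\mu_{n_{k}})$, and this gerbe admits a fiber functor over a finite separable extension of $F$, so in particular it is neutralized over $\ov{F}$ and $R\mc{I}_{k} \otimes_{F} \ov{F} \simeq \Rep_{\ov{F}}((u_{k})_{\ov{F}})$. Over $\ov{F}$ the group scheme $(u_{k})_{\ov{F}} \cong \prod_{\Hom_{F}(E_{k},\ov{F})} \mu_{n_{k}}$ is diagonalizable, with character group the finite abelian group $(\Z/n_{k})^{[E_{k}:F]}$, so $\Rep_{\ov{F}}((u_{k})_{\ov{F}})$ is the category of finite-dimensional $(\Z/n_{k})^{[E_{k}:F]}$-graded $\ov{F}$-vector spaces. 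This is manifestly semisimple, the simple objects being the one-dimensional spaces concentrated in a single degree.

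It remains to descend semisimplicity from $R\mc{I}_{k} \otimes_{F} \ov{F}$ to $R\mc{I}_{k}$. A short exact sequence $0 \to A \to B \xrightarrow{q} C \to 0$ in $R\mc{I}_{k}$ splits if and only if $\id_{C}$ lies in the image of the $F$-linear map $q_{*} \colon \Hom(C,B) \to \Hom(C,C)$; call this image $W$. Since internal Hom and global sections commute with the flat base change $- \otimes_{F} \ov{F}$, the base change $W \otimes_{F} \ov{F}$ is the image of $q_{*} \otimes_{F} \ov{F}$ inside $\Hom(C_{\ov{F}}, C_{\ov{F}})$, and this image contains $\id_{C_{\ov{F}}}$ by the semisimplicity established over $\ov{F}$. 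Hence $\id_{C}$ lies in $\Hom(C,C) \cap (W \otimes_{F} \ov{F}) = W$, the last equality because $\ov{F}/F$ is faithfully flat, so the sequence splits over $F$. Thus $R\mc{I}_{k}$ is semisimple, and by the first paragraph so is $\RigIsoc_{F}$.

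The main obstacle is organizational rather than conceptual: one must confirm that the transition functors defining $\varinjlim_{k} R\mc{I}_{k}$ are exact and that the identifications $R\mc{I}_{k} \simeq \Rep(\mc{E}_{k})$ are compatible with a common neutralizing extension---both contained in the constructions of the previous section. One could instead argue directly with the linear-algebraic objects $(V,\tilde{\Phi},\mc{D})$ by decomposing along the grading and the isotypic structure of the $W_{L_{E_{k}}/F}$-action, but the descent route above is cleaner and reuses the gerbe-theoretic machinery already set up.
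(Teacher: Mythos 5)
Your argument is correct, but it takes a genuinely different route from the paper's. The paper works directly with $\Rep(\mc{E}_{\Kal})$ and invokes \cite[Proposition 2.23]{DM82}: a Tannakian category in characteristic zero is semisimple precisely when the associated (pro-)algebraic group is pro-reductive, and $\mc{E}_{\Kal}$ is a projective limit of finite group extensions of $\Gamma_{K_E/F}$ by $\left[\Res_{E/F}(\mu_n)/\mu_n\right](\ov F)$, hence profinite, hence trivially pro-reductive. This is a two-line deduction once the criterion is on the table. Your proof instead decomposes the problem into three hands-on steps: (i) a filtered-colimit argument to reduce to the finite-level categories $R\mc{I}_k$, using that the transition functors are exact faithful $F$-linear tensor functors so that kernels and monomorphisms are detected at finite stages; (ii) an explicit identification of $R\mc{I}_k \otimes_F \ov F$ with $\Rep_{\ov F}\bigl(\prod_{\Hom_F(E_k,\ov F)}\mu_{n_k}\bigr)$, i.e.\ finite-dimensional graded vector spaces, which is visibly semisimple; and (iii) a faithfully-flat descent of splittings via the image of $q_* \colon \Hom(C,B)\to\Hom(C,C)$ base-changing correctly. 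Each step is right, though step (ii) quietly uses that the band of the fiber-functor gerbe of $R\mc I_k$ is $u_k$ and that base change to $\ov F$ neutralizes it, so that the Galois part of the Galois gerbe disappears and only the band survives; this is exactly what the paper's identification provides and is worth stating explicitly. What your approach buys is independence from the DM82 criterion and a completely explicit description of simples over $\ov F$; what it costs is the need to verify the colimit and descent bookkeeping, which the paper avoids by working once and for all with the pro-algebraic group. At bottom both proofs use the same germ of an idea---everything in sight is (pro-)finite---but package it differently.
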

\begin{proof}
    We work with the category $\text{Rep}(\mc{E}_{\text{Kal}})$ instead, which we showed in \S \ref{sec:RigIsocdef} can be identified with $\RigIsoc_{F}$. By \cite[Proposition 2.23]{DM82}, it suffices to show that $\mc{E}_{\text{Kal}}$ is pro-reductive (in this proof by ``reductive'' we mean that the connected component of the identity is reductive). This follows from the fact that it can be written as a projective limit of groups of the form 
    \begin{equation*}
        1 \to [\frac{\mathrm{Res}_{E/F}(\mu_{n})}{\mu_{n}}](\ov{F}) \to \mc{E}_{E/F,n} \to \Gamma_{K_{E}/F} \to 1,
    \end{equation*}
    where $K_{E}/E/F$ are finite Galois extensions, cf. \cite[\S 3.1]{Kalannals}, which are all finite.
\end{proof}
The previous lemma implies that, in order to construct all objects in $\RigIsoc_{F}$, the important task is to construct its simple objects. To facilitate this, we recall some standard facts about twisted Levi subgroups of $\GL_n$. 

Recall that for a reductive group $G$ defined over $F$, a subgroup $M \subset G$ is a twisted Levi subgroup if it is defined over $F$ and $M_{\ov{F}} \subset G_{\ov{F}}$ is a Levi subgroup. We say that a twisted Levi $M \subset G$ subgroup is \emph{elliptic} if the $F$-split rank of $Z(M)$ equals that of $Z(G)$. We denote the maximal split sub-torus of $Z(M)$ by $A_M$. Suppose that $M \subset G$ is a non-elliptic twisted Levi subgroup. Then $L=Z_{G}(A_M)$ is a proper Levi subgroup of $G$ containing $M$. In particular, the elliptic twisted Levi subgroups of $G$ are precisely those that are not contained in a proper Levi subgroup.

\begin{lem}{\label{lem: GLntwistedlevis}}
    The twisted Levi subgroups of $\GL_n$ over $F$ are isomorphic to
    \begin{equation*}
        \prod\limits_{i=1}^k \Res_{E_i/F} \GL_{n_i},
    \end{equation*}
    where $E_i/F$ is a field extension of degree $m_i$ and $\sum\limits_{i=1}^k m_in_i=n$.
\end{lem}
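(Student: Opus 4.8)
The plan is to identify twisted Levi subgroups of $\GL_n$ with centralizers of semisimple elements (equivalently, with centralizers of tori) and then translate this into a statement about $F$-algebras. Recall that a Levi subgroup of $\GL_{n,\ov{F}}$ is, up to conjugacy, a product $\prod_j \GL_{m_j}$ sitting block-diagonally, and it is the centralizer of its own connected center, a split torus. Working over $F$, a twisted Levi subgroup $M$ is the centralizer $Z_{\GL_n}(S)$ of an $F$-rational torus $S$; equivalently, writing $A = \End_F(V)$ for the $n$-dimensional $F$-vector space $V = F^n$ underlying $\GL_n$, the group $M$ is the unit group of the centralizer $Z_A(B)$ where $B \subset A$ is the (commutative, semisimple, $F$-étale) subalgebra generated by $S$. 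So the first step is to reduce to classifying, up to conjugacy, commutative semisimple $F$-subalgebras $B \subset \End_F(V)$ — equivalently $F$-algebra maps $B \to \End_F(V)$ — and then describing $Z_{\End_F(V)}(B)$.

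Next I would carry out the algebra computation. A commutative semisimple $F$-algebra is a finite product of finite separable field extensions $B = \prod_{i=1}^k E_i$; giving an $F$-algebra homomorphism $B \to \End_F(V)$ is the same as giving $V$ the structure of a $B$-module, i.e. a $\prod_i E_i$-module, which decomposes as $V = \bigoplus_i V_i$ with $V_i$ an $E_i$-vector space, say of dimension $n_i$, so that $\dim_F V = \sum_i n_i [E_i : F] = \sum_i n_i m_i = n$ with $m_i := [E_i:F]$. The centralizer of the image of $B$ in $\End_F(V)$ is then $\prod_i \End_{E_i}(V_i) \cong \prod_i M_{n_i}(E_i)$, whose unit group is $\prod_i \GL_{n_i}(E_i) = \left(\prod_i \Res_{E_i/F}\GL_{n_i}\right)(F)$, and this is manifestly $F$-rational, giving one containment. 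For the reverse containment one notes that any $F$-rational torus $S \subset \GL_n$ generates such a $B$ (the $F$-span of $S(\ov F)$ inside $\End_F(V)$ is a commutative semisimple $F$-algebra since $S$ is a torus), and $Z_{\GL_n}(S) = Z_{\GL_n}(B)$ has the stated form. Finally, since $M$ is a twisted Levi iff $M_{\ov F}$ is a Levi, and $(\Res_{E_i/F}\GL_{n_i})_{\ov F} \cong \prod_{\Hom_F(E_i,\ov F)} \GL_{n_i}$ is visibly block-diagonal hence a Levi of $\GL_{n,\ov F}$, the two notions coincide here and we are done.

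The main obstacle — really the only non-formal point — is the precise translation ``twisted Levi subgroup $\iff$ centralizer of an $F$-rational torus $\iff$ unit group of the centralizer of a commutative semisimple $F$-subalgebra of $\End_F(V)$,'' and in particular checking that every twisted Levi of $\GL_n$ really does arise this way (rather than as some more exotic $F$-form). This is handled by the observation, recalled in the paragraph preceding the lemma, that twisted Levi subgroups are exactly the centralizers of $F$-rational tori, together with the fact that for $\GL_n$ every Levi (hence every twisted Levi, being a form of a Levi that is itself a connected centralizer) is connected and equals the centralizer of its center. One should also remark that the decomposition $V = \bigoplus_i V_i$ and the integers $n_i, m_i$ are the conjugacy invariants, so the classification is genuinely up to $\GL_n(F)$-conjugacy; but since the lemma only claims the isomorphism type of the subgroups, this refinement is not needed for the statement itself.
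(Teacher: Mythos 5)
Your argument is correct, and it supplies the full details where the paper does not: the paper's entire proof is the one-line citation to \cite[Lemma 4.3.4]{FintzenSchwein}, so there is no competing approach to compare against. Your route --- twisted Levi $\Leftrightarrow$ centralizer of an $F$-rational torus $\Leftrightarrow$ unit group of the centralizer of a commutative \'etale $F$-subalgebra $B \subset \End_F(V)$, followed by decomposing $V$ as a $B = \prod_i E_i$-module and applying the double-centralizer computation $\End_B(V) = \prod_i \End_{E_i}(V_i)$ --- is the standard one and is essentially what that reference records.

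Two small points of hygiene, neither a genuine gap. First, the phrase ``the $F$-span of $S(\ov F)$ inside $\End_F(V)$'' is imprecise, since $S(\ov F)$ does not sit in $M_n(F)$; what you want is the $\ov F$-span of $S(\ov F)$ inside $M_n(\ov F)$, which is $\Gamma_{\ov F/F}$-stable because $S$ is $F$-rational, and then $B$ is its $F$-form $B_{\ov F} \cap M_n(F)$, which is \'etale because $B_{\ov F}$ is a product of copies of $\ov F$. Second, the step ``commutative semisimple $F$-algebra $=$ product of separable field extensions'' uses that $F$ has characteristic zero (or, better, that $B$ is \'etale); this is automatic here since $F$ is $p$-adic, but it is worth being aware of the hypothesis. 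You also gesture at but do not spell out why the centralizer of a torus in a connected reductive group over $\ov F$ is actually a Levi of a parabolic rather than merely a pseudo-Levi; for $\GL_n$ this is clear from the block-diagonal description you give, and in general it follows by choosing a cocharacter $\lambda$ of $S$ in general position so that $Z_G(S) = Z_G(\lambda)$ is the Levi of $P(\lambda)$.
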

\begin{proof}
  This statement is well-known. See the proof given in \cite[Lemma 4.3.4]{FintzenSchwein}
\end{proof}
Suppose that $X \in \Ob(\RigIsoc_{F})$, which maps via \eqref{gerbeequiv} to an algebraic $1$-cocycle $z_X: \mc{E}_{\Kal} \rightarrow \GL_n$ with $\nu_X$ the associated algebraic homomorphism of the band.

\begin{lem}\label{lemma:reg}
We continue with $G=\GL_n$. One can conjugate $z_X$ by $G(\ov{F})$ such that $\nu_X$ has trivial Galois action (i.e., it gives an $F$-rational homomorphism $u \to \GL_{n}$). 
\end{lem}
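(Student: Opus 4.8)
The plan is to translate the statement into a question about the algebraic homomorphism $\nu_X \colon u_{\ov F} \to \GL_{n,\ov F}$ and its Galois twist, and then to use the structure of $u$ together with the semisimplicity of $\RigIsoc_F$. Recall that an algebraic homomorphism from $u_{\ov F}$ (a pro-torus over $\ov F$ in the sense that $u_{\ov F} = \varprojlim_k (u_k)_{\ov F}$ with each $u_k = \Res_{E_k/F}(\mu_{n_k})$ a finite multiplicative group scheme, which over $\ov F$ becomes a product of $\mu_{n_k}$'s) into $\GL_n$ is, up to $\GL_n(\ov F)$-conjugacy, the same data as a decomposition $\ov F^{\,n} = \bigoplus_\chi V_\chi$ indexed by characters $\chi$ of $u_{\ov F}$ together with the requirement that only finitely many $V_\chi$ are nonzero. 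Concretely $\nu_X$ factors through some $(u_k)_{\ov F} = \prod_{\Gamma_{E_k/F}} \mu_{n_k}$, and by the explicit description in \eqref{keyfiberfunctor} the eigenspace decomposition of $\omega_k(X) \otimes_{E_{n_k}} \ov F$ is exactly the one indexed by the tuples $(a_\sigma/n_k)_\sigma \in \prod_{\Gamma_{E_k/F}}[0,1)$ coming from the grading $\mc D$ on $V$. The Galois action on $\nu_X$ (the obstruction to descending it to an $F$-homomorphism $u \to \GL_n$) is measured by a cocycle $\gamma \mapsto g_\gamma \in \GL_n(\ov F)$ with $\prescript{\gamma}{}{\nu_X} = \Int(g_\gamma) \circ \nu_X$; the goal is to find $h \in \GL_n(\ov F)$ with $\Int(h) \circ \nu_X$ Galois-fixed, equivalently to kill the class of $(g_\gamma)$ in an appropriate $H^1$.

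First I would make precise the target group of this cocycle. Since $\prescript{\gamma}{}{\nu_X}$ and $\nu_X$ have the same image $T := \nu_X(u_{\ov F})^{\circ\text{-closure}}$ (a torus, or rather a quotient of $u_{\ov F}$, but its image in $\GL_n$ is a subtorus of a maximal torus) only after we know the Galois twist lands in the normalizer, so the cleanest approach is: the centralizer $Z_{\GL_n}(\nu_X) =: M$ is, by the standard structure theory, a Levi subgroup of $\GL_n$ over $\ov F$ — it is a product of $\GL$'s attached to the eigenspace decomposition — and the cocycle $(g_\gamma)$ naturally takes values in $N_{\GL_n}(M)/\,?$; more usefully, one checks that $M$ is in fact defined over $F$ as a twisted Levi subgroup (it is the centralizer of the $F$-rational torus which is the image of the Newton-type datum, once we observe that the \emph{collection} of eigencharacters is permuted by $\Gamma$, since $\tilde\Phi$ acts on the grading $\mc D$ by permuting the $\Gamma_{E_k/F}$-indices, by the defining property of $\mc I_{E/F,s}$). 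Thus $(g_\gamma)$ may be taken with values in $N_{\GL_n}(M)(\ov F)$, and its image in $(N_{\GL_n}(M)/M)(\ov F)$ — a finite étale $F$-group — is a $1$-cocycle; I would first conjugate by an element of $\GL_n(\ov F)$ (in fact of $M(\ov F)$-coset representatives) to reduce this "outer" part, i.e. to arrange that $\gamma$ permutes the eigenspaces $V_\chi$ exactly according to the Galois action on the characters $\chi$, which is automatic from the structure of $X$ as an object defined over $F$. After this reduction $(g_\gamma)$ takes values in $M(\ov F)$.

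The remaining, and genuinely substantive, step is then to kill the class of $(g_\gamma) \in Z^1(\Gamma_{\ov F/F}, M(\ov F))$, where $M \cong \prod_i \Res_{E_i/F}\GL_{n_i}$ is a product of restrictions of scalars of general linear groups over field extensions $E_i/F$. By Shapiro's lemma and Hilbert 90, $H^1(F, \Res_{E_i/F}\GL_{n_i}) = H^1(E_i, \GL_{n_i}) = 1$, so $H^1(F, M) = 1$ and the cocycle $(g_\gamma)$ is a coboundary: there is $h \in M(\ov F)$ with $g_\gamma = h^{-1}\,\prescript{\gamma}{}{h}$. Conjugating $z_X$ by this $h$ replaces $\nu_X$ by $\Int(h)\circ\nu_X$, which is now Galois-invariant, hence descends to an $F$-homomorphism $u \to \GL_n$, as desired; and since $h \in M(\ov F)$ centralizes $\nu_X$ we have not disturbed the already-achieved normalization. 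The main obstacle I anticipate is the \textbf{bookkeeping in the first reduction}: verifying carefully that the centralizer $M$ of $\nu_X$ is $\Gamma$-stable and descends to an $F$-rational (twisted Levi) subgroup of $\GL_n$, so that the cocycle really lands in $M(\ov F)$ and one is in a position to invoke vanishing of $H^1(F,M)$. This rests on the compatibility, built into the definition of $\mc I_{E/F,s}$ and hence of $R\mc I_k$, between the $W_{L_E/F}$-action $\tilde\Phi$ and the permutation of the $\prod_{\Gamma_{E_k/F}}$-grading indices — precisely the property $\tilde\Phi(x)(V_{(a_\sigma)_\sigma}) = V_{(a_{\bar x^{-1}\sigma})_\sigma}$ — which forces the Galois action on $\GL(\omega_k(X))$ to permute the $\nu_X$-eigenspaces in the way matching the Galois action on the characters of $u$, exactly what is needed.
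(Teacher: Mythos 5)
There is a genuine gap, and it is precisely the step you label as ``automatic.''

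Let $M = Z_G(\nu_X)$ over $\ov F$. As you note, the obstruction to $\nu_X$ being $\Gamma$-fixed is measured by $g_\gamma = z_X(\bar\gamma)^{-1}$ (for $\bar\gamma \in \mc{E}_{\Kal}$ lifting $\gamma$), since $\prescript{\gamma}{}{\nu_X} = \Int(g_\gamma)\circ\nu_X$. You claim that one can conjugate so that $\gamma(V_\chi) = V_{\prescript{\gamma}{}\chi}$ for all eigenspaces $V_\chi$, i.e.\ so that $g_\gamma \in M(\ov F)$, and that this is ``automatic from the structure of $X$.'' But the compatibility you cite --- $\tilde\Phi(x)(V_{(a_\sigma)_\sigma}) = V_{(a_{\bar x^{-1}\sigma})_\sigma}$ --- lives at the level of the $W_{L_E/F}$-action on the underlying graded $L_E$-space $V$ of $X$, not at the level of a chosen $\ov F$-basis of $\omega_k(X)\otimes\ov F$. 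It does show that the \emph{multiset} of eigencharacters of $\nu_X$ is $\Gamma$-stable (so $\prescript{\gamma}{}{\nu_X}$ is $\GL_n(\ov F)$-conjugate to $\nu_X$), but it does not show that the eigenspaces themselves line up under the ambient Galois action for any particular trivialization. Indeed, $\gamma(V_\chi) = z_X(\bar\gamma)^{-1}V_{\prescript{\gamma}{}\chi}$ in general, so the condition $\gamma(V_\chi) = V_{\prescript{\gamma}{}\chi}$ is \emph{exactly} the statement $z_X(\bar\gamma) \in M(\ov F)$, which is what the lemma is asking you to establish. The paper proves this by (i) conjugating so that $M = L_{\ov F}$ for an $F$-rational Levi $L$ (possible since $\GL_n$ is split), (ii) observing that $\gamma \mapsto z_X(\bar\gamma)\bmod L$ is a cocycle in $Z^1(F, N_G(L)/L)$ and invoking surjectivity of $H^1(F, N_G(L)) \to H^1(F, N_G(L)/L)$ --- itself proved using the permutation-matrix splitting, a $\GL_n$-specific input --- to lift it to $z \in Z^1(F, N_G(L))$, and (iii) trivializing $z$ in $G$ using $H^1(F,\GL_n)=1$. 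Your ``automatic'' claim replaces all of this.

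Moreover, your ``genuinely substantive step'' at the end is actually a non-step. Once $(g_\gamma)$ lands in $M(\ov F) = Z_G(\nu_X)(\ov F)$, you already have $\prescript{\gamma}{}{\nu_X} = \Int(g_\gamma)\nu_X = \nu_X$, so $\nu_X$ is Galois-fixed with no further conjugation; and any $h\in M(\ov F)$ you conjugate by satisfies $\Int(h)\nu_X = \nu_X$ in any case. The invocation of $H^1(F,M)=1$ (correct, but unnecessary here) therefore does no work. The difficulty you thought was at the end is really at the beginning, and it is handled in the paper by a pair of $\GL_n$-specific vanishing/splitting facts rather than by anything built into the definition of $\RigIsoc_F$.
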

\begin{proof}
     We first argue that for all twisted Levis subgroups $M$, the map 
     \begin{equation*}
     H^1(F, N_G(M)) \rightarrow H^1(F, N_G(M)/M)
     \end{equation*}
     is surjective. Note that for $M$ a maximal torus, this surjectivity is a corollary of Raghunathan's theorem. For $G=\GL_n$ we can prove the result for a general twisted Levi. First, we assume $M$ is a Levi subgroup, and then  $N_G(M) \rightarrow N_G(M)/M$ has a natural $\Gamma_{\ov{F}/F}$-equivariant splitting coming from the permutation matrices. This implies the surjectivity for $M$, and then we prove the general case by twisting.

    Let $M= Z_{G}(\nu_X)$. Since $G=\GL_n$, we have that $M$ is a twisted Levi of $G_E$, where $E$ is the field of definition of $\nu_X$. Notice that the action on $z_X$ by coboundaries induces a conjugation action on $\nu_X$ and hence $M$. The group $M$ might not be $\Gamma_{\ov{F}/F}$-invariant, but since $G = \GL_{n}$ is split, every Levi subgroup over $\ov{F}$ is conjugate to the base-change of an $F$-rational Levi subgroup and we can thus conjugate $\nu_X$ (and twist $z_{X}$ by the corresponding coboundary) such that $M$ is the base-change to $\ov{F}$ of such a subgroup $L$ of $G$.

    Thus we may as well assume $M = L_{\ov{F}}$ is $\Gamma_{\ov{F}/F}$-stable.  Now, $z_X$ gives a cocycle in $Z^1(F, N_G(L)/L)$ via the function
    \begin{equation*}
        \sigma \mapsto \bar{\sigma} \mapsto z_{X}(\bar{\sigma}) \hspace{1mm} \text{mod} \hspace{1mm} L(\ov{F}),
    \end{equation*}
    where $\bar{\sigma}$ is any lift of $\sigma \in \Gamma_{\ov{F}/F}$ to $\mc{E}_{\Kal}$ (the cocycle is independent of the lift).
   By our above argument, we may modify $z_X$ by a coboundary such that this cocycle comes via projection from some $z \in Z^1(F, N_G(L))$. Now, suppose there exists some $h \in G(\ov{F})$ such that $h\nu_Xh^{-1}$ is $\Gamma_{\ov{F}/F}$-fixed. Then  we compute this is equivalent to $h^{-1}\gamma(h)z^{-1}_{\gamma} \in L(\ov{F})$ for each $\gamma \in \Gamma_{\ov{F}/F}$. This would certainly follow if we knew that $z_{\gamma}$ came from a class in $H^1(F, N_G(L))$ that is trivial in $H^1(F,G)$. But this latter group is trivial again because $G = \GL_{n}$.
\end{proof}
From now on, we assume $\nu_X$ is fixed by $\Gamma_{\ov{F}/F}$.

\begin{rem}
    The cohomology set $H_{\text{alg}}^{1}(\mc{E}_{\Kal}, G)$ for a connected reductive group $G$ is studied in detail in \cite{DS24}, which excusively works with so-called ``regular'' classes, which are those arising as the image of $H^{1}_{\text{alg}}(\mc{E}_{\Kal}, T)$ for a maximal torus $T$ of $G$. Lemma \ref{lemma:reg} implies that every class in $[z] \in H_{\text{alg}}^{1}(\mc{E}_{\Kal}, \GL_{n})$ is regular, as follows: One combines \cite[Lemma 2.8]{DS24} which says, when $z|_{u}$ is defined over $F$, we have that $z \in Z_{\bas}^{1}(\mc{E}_{\Kal}, Z_{G}(z|_{u}))$ with the fact (\cite[Corollary 3.7]{Kalannals}) that $H_{\text{alg}}^{1}(\mc{E}_{\Kal}, T) \to H_{\bas}^{1}(\mc{E}_{\Kal}, M)$ is surjective for any connected reductive group $M$ with elliptic maximal torus $T$.
\end{rem}

\begin{lem}\label{lem:simple}
Under the assumption above, suppose $M \subset \GL_n$ is a minimal twisted Levi subgroup through which $z_X$ factors. If $X$ is simple, then $z_X$ is basic in $M$ (in other words, $\nu_X$ factors through $Z(M)$) and $M$ is elliptic. On the other hand, $X$ is simple if and only if $z_X$ factors through only elliptic twisted Levi subgroups of $\GL_{n}$.
\end{lem}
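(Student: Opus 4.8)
The plan is to convert ``$X$ is simple'' into ``$X$ has no nontrivial direct-sum decomposition'' using the semisimplicity of $\RigIsoc_F$ proved above, and then to detect such decompositions on the cocycle $z_X$ through the equivalence \eqref{gerbeequiv}, which is compatible with direct sums: a splitting $X \cong X_1 \oplus X_2$ with $\dim X_i = n_i$ corresponds, after choosing a basis adapted to the decomposition, to $z_X$ being cohomologous to a cocycle valued in the standard Levi $\GL_{n_1} \times \GL_{n_2} \subset \GL_n$, and conversely a cocycle valued in $\GL_{n_1} \times \GL_{n_2}$ splits $X$. The combinatorial input is Lemma \ref{lem: GLntwistedlevis}: a twisted Levi $M \cong \prod_{i=1}^{k} \Res_{E_i/F}\GL_{n_i}$ of $\GL_n$ is elliptic precisely when $k = 1$ (its center $\prod_i \Res_{E_i/F}\Gm$ has $F$-split rank $k$, to be compared with $\dim Z(\GL_n) = 1$), and in the standard embedding $M$ lies inside the honest Levi subgroup $\prod_i \GL_{n_i[E_i:F]}$ of $\GL_n$.

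For the final biconditional I would prove both contrapositives. If $X$ is not simple then, by semisimplicity, $X \cong X_1 \oplus X_2$ with $X_1, X_2 \neq 0$, so $z_X$ factors through $\GL_{n_1} \times \GL_{n_2}$ with $n_1, n_2 \geq 1$, which is non-elliptic; hence $z_X$ does not factor through only elliptic twisted Levis. Conversely, if $z_X$ factors through some non-elliptic twisted Levi $M \cong \prod_{i=1}^{k} \Res_{E_i/F}\GL_{n_i}$ then $k \geq 2$, and grouping the factors exhibits $M$ inside some $\GL_{r_1} \times \GL_{r_2}$ with $r_1, r_2 \geq 1$; then $z_X$ is cohomologous to a block-diagonal cocycle, $X \cong X_1 \oplus X_2$ is a nontrivial decomposition, and $X$ is not simple. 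The same argument gives the ellipticity claim in the first assertion: if $X$ is simple, then no non-elliptic twisted Levi through which $z_X$ factors can exist, so in particular a minimal one $M$ is elliptic, hence of the form $\Res_{E/F}\GL_m$.

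It then remains, for $X$ simple and $M \cong \Res_{E/F}\GL_m$ a minimal twisted Levi through which $z_X$ factors, to show $z_X$ is basic in $M$. Replace $z_X$ by a cohomologous cocycle valued in $M$. I would now invoke that every class in $H^1_{\mathrm{alg}}(\mc{E}_{\Kal}, \GL_m)$ over a $p$-adic field is regular, i.e.\ factors through a maximal torus --- this is the statement recalled in the Remark above, whose proof is insensitive to the base field and, via Shapiro's lemma, applies to $\Res_{E/F}\GL_m$ over $F$ as well. Hence $z_X$ is cohomologous, within $M$, to a cocycle valued in some maximal torus $T \subseteq M$; since a maximal torus is a minimal twisted Levi and $z_X$ factors through $T \subseteq M$, minimality of $M$ forces $M = T$. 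Thus $M$ is a maximal torus, necessarily elliptic by the previous paragraph, and $z_X$ is automatically basic in $M$ because $Z(M) = M$.

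The part I expect to require the most care is the bookkeeping in the first two paragraphs: making the equivalence \eqref{gerbeequiv} precisely compatible both with direct sums and with the operation of conjugating $z_X$ into a twisted Levi, and invoking the structure theory of twisted Levi subgroups of $\GL_n$ (Lemma \ref{lem: GLntwistedlevis}, including the standard embedding into an honest Levi) in exactly the form used. A secondary subtlety, in the last paragraph, is the regularity statement at the level of $\Res_{E/F}\GL_m$; an alternative that sidesteps it is to keep $\nu_X$ $F$-rational throughout (as arranged just before the Lemma), to conjugate $z_X$ only within $M^* = Z_{\GL_n}(\nu_X)$ --- inside which $z_X$ is basic by \cite[Lemma 2.8]{DS24} --- and then to apply \cite[Lemma 2.8]{DS24} inside $M$ to the $F$-rational Newton point, concluding that $z_X$ factors through the twisted Levi $Z_M(\nu_X) \subseteq M$, which must equal $M$ by minimality.
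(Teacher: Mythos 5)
Your overall strategy matches the paper's: the biconditional comes from the observation that (by semisimplicity) a non-simple $X$ decomposes and hence $z_X$ lands in a proper Levi, while any non-elliptic twisted Levi sits inside a proper Levi (the centralizer of its split center); and the ellipticity of $M$ then follows from the biconditional. The one place you use a genuinely different route is for basicness, and there your primary argument has a gap.

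The issue is the step ``Hence $z_X$ is cohomologous, within $M$, to a cocycle valued in some maximal torus $T \subseteq M$; since \dots\ $z_X$ factors through $T \subseteq M$, minimality of $M$ forces $M=T$.'' Regularity (via \cite[Corollary 3.7]{Kalannals}) tells you that $[z_X]$ lies in the image of $H^1_{\text{alg}}(\mc{E}_{\Kal},T)\to H^1_{\bas}(\mc{E}_{\Kal},M)$, i.e.\ there is some $m\in M(\ov F)$ with $m\,z_X(e)\,\bar e(m)^{-1}\in T(\ov F)$. That gives a \emph{cohomologous} cocycle landing in $T$, not that $z_X$ itself lands in a torus subgroup of $M$: since $m$ need not be $F$-rational and $m^{-1}\sigma(m)$ need not normalize $T$, the set $m^{-1}T(\ov F)m$ is not an $F$-rational twisted Levi and $z_X(e)\in m^{-1}T(\ov F)\bar e(m)$ is a coset condition, not a subgroup condition. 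So you cannot conclude $M=T$; indeed the lemma deliberately claims only that $M$ is elliptic and $z_X$ basic in $M$, not that $M$ is a maximal torus. (One might worry that $M=T$ would anyway make basicness trivial; the paper needs the lemma exactly because the relevant $M$ can be $\Res_{E/F}\GL_s$ with $s>1$.)

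Your ``alternative that sidesteps it'' in the final paragraph is the correct argument, and it is essentially the paper's: with $\nu_X$ defined over $F$ and $z_X$ valued in $M$, apply \cite[Lemma 2.8]{DS24} (or its proof) directly inside $M$ to see $z_X$ factors through the twisted Levi $Z_M(\nu_X)$; minimality forces $Z_M(\nu_X)=M$, i.e.\ $\nu_X$ is central in $M$, which is exactly basicness. (The paper reproves this centralizer step inline rather than citing \cite{DS24}, noting that $Z_M(\nu_X)$ is an intersection of centralizers of commuting semisimple elements, hence a twisted Levi in $\GL_n$.) So I would drop the regularity detour for basicness and promote your alternative to the main argument. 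The extra bookkeeping you spelled out for the biconditional (that a direct-sum decomposition of $X$ corresponds, after an adapted choice of basis, to a block-diagonal cocycle, and conversely) is glossed over in the paper with an ``observe,'' so making it explicit as you do is a reasonable addition.
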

\begin{proof}
Suppose $z_X$ is not basic in $M$. Then let $M' \subset M$ be the centralizer of $\nu_X$ in $M$. Since $G=\GL_n$, we claim that $M'$ is a twisted Levi subgroup of $M$ and hence $G$. Indeed, $M'$ is the intersection of centralizers of commuting semisimple elements of $\GL_n$, each of which is a twisted Levi subgroup. Moreover, $M' \subset M$ is proper since $\nu_X$ is not basic. The cocycle $z_X$ factors through $M'$ since $x \in \mc{E}_{\Kal}$ lifting $\sigma \in \Gamma_{\ov{F}/F}$ satisfies $\Int(x)(\nu_X) = \Int(x)(\sigma(\nu_X))=\nu_X$, hence $M$ is not minimal. 

If $X$ is simple, then $z_X$ cannot factor through any non-elliptic twisted Levi subgroup, since by the above discussion this would imply $z_X$ factors through a Levi subgroup of $G$ and hence is not simple.

For the second part, observe that $X$ is simple if and only if $z_X$ does not factor through a proper Levi subgroup of $G$ and by the discussion before Lemma \ref{lem: GLntwistedlevis}, this is equivalent to $z_X$ not factoring through any non-elliptic twisted twisted Levi subgroup. 
\end{proof}

\subsection{Classification of simple objects}
By Lemma \ref{lem:simple} it suffices to understand the $X \in \Ob(\RigIsoc_{F})$ corresponding to basic classes in the elliptic twisted Levi subgroups $M \xrightarrow{\sim} \Res_{E/F}(\GL_{s})$ of $\GL_n$ which are not isomorphic to an object of the form $Y^{\oplus c}$ for a basic object $Y$ corresponding to $\Res_{E/F}(\GL_{s'})$ for $cs' = s$ with $c>1$. Indeed, such an $X$ would have $z_{X}$ valued in the non-elliptic twisted Levi subgroup $\Res_{E/F}(\GL_{s'})^{c}$.

By \cite[Corollary 5.4]{Kalannals}, for a twisted Levi subgroup $M \subset \GL_n$, we have a bijection
\begin{equation}\label{TNM}
    H^1_{\bas}(\mc{E}_{\Kal}, M) \cong X^{\ast}(\pi_0(Z(\wh{\bar{M}})^+))=X^{\ast}(Z(\wh{\bar{M}})^+)_{\tor} ,
\end{equation}
where $Z(\wh{\bar{M}})^+$ is the pre-image of $Z(\wh{M})^{\Gamma_{\ov{F}/F}}$ in $\wh{\bar{M}} \cong \wh{M}_{\msc} \times \varprojlim\limits_{z \mapsto z^n} Z(\wh{M})^{\circ}$.
If $M$ is elliptic then 
\begin{equation*}
    M \cong \Res_{E/F} \GL_{s},
\end{equation*}
for some extension $E/F$ of degree $m$ such that $n=ms$.
Then $Z(\wh{M})^{\Gamma_{\ov{F}/F}} \cong \C^{\times}$ and $Z(\wh{\bar{M}})^+$ is isomorphic to the subgroup of $(\mu_s)^m \times (\varprojlim\limits_{z \mapsto z^n} \C^{\times})^m$ given by $((a_1,\dots,a_m), (b_1, \dots , b_m))$ such that $a_1\ov{b_1}= \dots =a_m\ov{b_m}$ where $\ov{b_i}$ is the projection of $b_i$ to the final $\C^{\times}$. Then we have the Cartesian diagram
\begin{equation*}
    \begin{tikzcd}
Z(\wh{\bar{M}})^+ \arrow[r] \arrow[d] & (\mu_s)^m \times \varprojlim\limits_{z \mapsto z^n} Z(\wh{M}) \arrow[d]  \\
Z(\wh{M})^{\Gamma_{\ov{F}/{F}}} \arrow[r] & Z(\wh{M})
    \end{tikzcd}
\end{equation*}
Passing to characters, we have the identifications
\begin{equation*}
    X^{\ast}(Z(\wh{M})^{\Gamma_{\ov{F}/F}}) = X^{\ast}(Z(\wh{M}))_{\Gamma_{\ov{F}/F}} = X^{\ast}(Z(\wh{M}))/ X^{\ast}(Z(\wh{M}))_0,
\end{equation*}
where $X^{\ast}(Z(\wh{M}))_0$ is generated by $\gamma(x) - x$ for $\gamma \in \Gamma_{\ov{F}/F}$ and $x \in X^{\ast}(Z(\wh{M}))$ or equivalently the augmentation ideal under identification of $X^{\ast}(Z(\wh{M}))$ with $\Z[\Gamma_{E/F}]$ (or $\Z[\Gamma_{\ov{F}/E} \backslash \Gamma_{\ov{F}/F}] = \Hom_{F}(E,\ov{F})$ if $E/F$ is not Galois). Further, note that $X^{\ast}(\varprojlim\limits_{z \mapsto z^n} Z(\wh{M})) = X^{\ast}(Z(\wh{M}))_{\Q}$.

Thus we finally have that
\begin{equation*}
    X^{\ast}(Z(\wh{\bar{M}})^+) \cong [(\Z/s\Z)^m \times X^{\ast}(Z(\wh{M}))_{\Q}] / X^{\ast}(Z(\wh{M}))_0,
\end{equation*}
and hence
\begin{equation*}
      X^{\ast}(Z(\wh{\bar{M}})^+)_{\tor} \cong [(\Z/s\Z)^m \times X^{\ast}(Z(\wh{M}))_{\Q, 0}] / X^{\ast}(Z(\wh{M}))_0.  
\end{equation*}

The inflation-restriction exact sequence  \cite[(3.5)]{Kalannals}
\begin{equation*}
    1 \rightarrow H^1(F, M) \rightarrow H^1_{\bas}(\mc{E}_{\Kal}, M) \rightarrow \Hom(u,Z(M))^{\Gamma_{\ov{F}/F}}, 
\end{equation*}
and vanishing of the first term implies that $H^1_{\bas}(\mc{E}_{\Kal}, M) \cong X^{\ast}(Z(\wh{\bar{M}})^{\Gamma_{\ov{F}/F}})$ embeds into $\Hom(u,Z(M))^{\Gamma_{\ov{F}/F}}$, which admits the follow chain of identifications
\begin{equation*}
\cong \Hom(X^{\ast}(Z(M)), X^{\ast}(u))^{\Gamma_{\ov{F}/F}} = \Hom(X^{\ast}(Z(M)) , \Q/\Z) \cong X^{\ast}(Z(\wh{M}))_{\Q} / X^{\ast}(Z(\wh{M})) \cong (\Q/\Z)^m.
\end{equation*}
We claim that the map
\begin{equation*}
    [(\Z/s\Z)^m \times X^{\ast}(Z(\wh{M}))_{\Q, 0}] / X^{\ast}(Z(\wh{M}))_0 \rightarrow X^{\ast}(Z(\wh{M}))_{\Q} / X^{\ast}(Z(\wh{M}))
\end{equation*}
is given by $(a_i, b_i) \mapsto \frac{b_i}{s} - \frac{a_i}{s}$---it is easy to check the image is $[(\Q^m)_0 + (\Z/s\Z)^m]/\Z^m \subset (\Q/\Z)^m $. 
\begin{proof}[Proof of claim]
To verify that the map has the above claimed form, we check at a finite $k$-level and compose with the intermediate map from the proof of \cite[Proposition 4.21]{DS24}
\begin{equation}\label{intermedeq}
    H^1(\mc{E}_{\Kal}, Z_{M,k} \to M) \to \Hom(X^{*}(Z_{M,k}), \Q/\Z) \to \Hom(X^{*}(Z(M)), \Q/\Z),
\end{equation}
where $Z_{M,k}$ is the subgroup of $Z(M)$ consisting of all those elements whose $k$th power lies in $Z(M_{\text{der}})$. We claim the middle group is canonically identified with the character group of the kernel of the map $\widehat{M/Z_{M,k}} \to \widehat{M}$, denoted by $K$, which is isomorphic to
\begin{equation*}
    \frac{(\Z/s\Z)^{m} \times \Z^{m}}{k\Z^{m}},
\end{equation*}
where the embedding of $k\Z^{m}$ is given by $(a_{1}, \dots, a_{m}) \mapsto (\ov{a_{1}/k}, \dots, \ov{a_{m}/k}) \times (a_{1}, \dots, a_{m})$. Granting this, the first map in \eqref{intermedeq} is the one induced by the inclusion
\begin{equation*}
    (\Z/s\Z)^{m} \times (\Z^{m})_{0} \hookrightarrow (\Z/s\Z)^{m} \times \Z^{m}.
\end{equation*}
Finally, we compute the right-hand map of \eqref{intermedeq} and prove the claim by writing out the explicit identification
\begin{equation*}
    X^{*}(K) \xrightarrow{\sim} \Hom(X^{*}(Z_{M,k}), \Q/\Z).
\end{equation*}
First, set $T$ to be a maximal torus of $M$, and $Z(M_{1}) := Z(M)/Z(M_{\text{der}})$. Then we have the exact sequence
\begin{equation*}
    0 \rightarrow Z_{M,k} \rightarrow T \rightarrow T/Z_{M,k} \rightarrow 0,
\end{equation*}
and isomorphism
\begin{equation*}
    T/Z_{M,k} \cong T_{\ad} \times Z(M_1)/Z(M_1)[k],
\end{equation*}
as in \cite[p. 71]{Kaletha18}. Thus, we have the exact sequence
\begin{equation}\label{annoyingSESbis}
   0 \to X^{*}(T_{\ad}) \oplus X^{*}(\frac{Z(M_{1})}{Z(M_{1})[k]}) \to X^{*}(T) \to  X^{*}(Z_{M,k})  \to 0.
\end{equation}

Now pick an embedding $\wh{T} \hookrightarrow \wh{M}$ and we denote the image also by $\wh{T}$. We now apply the functor $\Hom_{\Z}(-,\Z)$ to \eqref{annoyingSESbis}, omitting the first term, and via the injective resolution $\Z \to \Q \to \Q/\Z$, identifying $\text{Ext}^{1}_{\Z}(X^{*}(Z_{M,k}), \Z)$ with $\Hom(X^{*}(Z_{M,k}),\Q/\Z)$, yields the exact sequence 
\begin{equation}\label{annoyingSES}
X^{*}(\widehat{T}) \to X^{*}(\widehat{T}_{\der}) \oplus X^{*}(Z(\widehat{M})) \to \Hom(X^{*}(Z_{M,k}), \Q/\Z) \to 0,
\end{equation}
where the first map is restriction to $\widehat{T}_{\der}$ plus the restriction along  $Z(\widehat{M}) \xrightarrow{[k]} Z(\widehat{M}) \rightarrow \wh{T}$ (using that $\wh{Z(M_1)} = Z(\wh{M})^{\circ} = Z(\wh{M})$). It suffices to compute the explicit description of the second map in \eqref{annoyingSES}, which maps $X^{*}(K)$ to $\Hom(X^{*}(Z_{M,k}), \Q/\Z)$ and is a connecting homomorphism.

We choose the standard identification of $X^{*}(\widehat{T}) = X^{*}((\mathbb{C}^{\times,s})^{m})$ with $(\Z^{s})^{m}$, so that $X^{*}(\widehat{T}_{\der}) = (\Z^{s}/\Delta(\Z))^{m}$ and $X^{\ast}(Z(\widehat{M})) = (\Z^{s}/(\Z^{s})_{0})^{m}$. We identify the former with $(\Z^{s-1})^{m}$ and the latter with $\Z^{m}$ in the using the maps $((\ov{c_{i,j})_{1 \leq i \leq s}}) \mapsto ((c_{i,j} - c_{s,j})_{1 \leq i \leq s-1})$ and $(\ov{y_{i,j}}) \mapsto (y_{1,j} + \dots + y_{s,j})$, respectively (corresponding to bases $\{e_{1,j}, \dots ,e_{s-1,j}\}_{1 \leq j \leq m}, \{e_{1,j}\}_{1 \leq j \leq m}$). Under this choice of coordinates, the first map explicitly sends $((c_{i,j})_{1 \leq i \leq s})_{1 \leq j \leq m}$ to $(c_{1,j} -c_{s,j}, \dots c_{s-1,j}-c_{s,j},c_{1,j}+ \dots + c_{s,j})_{1 \leq j \leq m}$. Thus, given $(x_{1,j}, \dots x_{s-1,j}, y_{j})_{j} \in X^{*}(\widehat{T}_{\der}) \oplus X^{*}(Z(\widehat{M})),$ we lift it to $X^{*}(\widehat{T})_{\Q}$, yielding $((x_{i,j}+y_{j}/ks - (x_{1,j} + \dots x_{s-1,j})/s, y_{j}/ks - (x_{1,j} + \dots x_{s-1,j})/s)_{1 \leq i \leq s-1})_{j}$ and then take its image in $X^{*}(\widehat{T})_{\Q/\Z}$, yielding 
\begin{equation*}
(y_{j}/ks - (x_{1,j} + \dots x_{s-1,j})/s)_{j} \in (\Z/ks\Z)^{m} = \Hom((\Z/ks\Z)^{m}, \Q/\Z) = \Hom(X^{*}(Z_{M,k}), \Q/\Z).
\end{equation*}

It follows that when we factor the image of $(x_{1,j}, \dots x_{s-1,j}, y_{j})_{j}$ as in the previous paragraph via the second map of \eqref{annoyingSES} through restriction to $X^{*}(Z(\widehat{M}_{\text{der}})) \oplus X^{*}(Z(\widehat{M}))$, we obtain 
\begin{equation*}
(x_{1,j} + \dots + x_{s-1,j}, y_{j})_{j} \in X^{*}(Z(\widehat{M}_{\text{der}})) \oplus X^{*}(Z(\widehat{M})) \mapsto (y_{j}/ks - (x_{1,j} + \dots x_{s-1,j})/s)_{j},
\end{equation*}
which is exactly the claimed map.

\end{proof}

It remains to construct elements $X$ of $\RigIsoc_{F}$ with cocycle $z_X$ factoring through $M \xrightarrow{\sim} \Res_{E/F} \GL_s$ and basic for that group with gradings matching the above slopes. Note that since there is an injection into $\Hom(u,Z(M))^{\Gamma_{\ov{F}/F}}$, we can identify the class of a basic object from just its grading. 

\begin{ex}\label{objex}
We consider the case where $E/F$ is unramified of degree $m$. Suppose the image of the slope map is the class of $(a_{\sigma}/r)_{\sigma}$ for some $r$ divisible by $s$ and $a_{\sigma} \in \Z$ and $\sigma \in \Gamma_{E/F}$ such that $\sum\limits_{\sigma \in \Gamma_{E/F}} a_{\sigma}/r = b/s$ for some $b \in \Z$. 

We first analyze the case where $b=0$. Up to modifying $(a_{\sigma}/r)_{\sigma}$ by an element of $\Z^m$, we may assume that the $a_{\sigma}/r$ satisfy $a_{\sigma} \neq a_{\sigma'}$ for $\sigma \neq \sigma'$. Now we define the $L_E$ vector space $V$ such that its $\gamma \cdot (a_{\sigma})_{\sigma}$-graded piece is $s$-dimensional, for each $\gamma \in \Gamma_{E/F}$. Let $\alpha = \sum\limits_{\sigma} a_{\sigma}/r \in \Z$ and define $\tilde{\Phi}$ such that $\mf{f}$ acts semilinearly, permutes the grading by $\mf{f}$, and scales a basis of the $(a_{\sigma})_{\sigma}$-graded piece by $\varpi^{\alpha}$. Then $\mf{f}_E$ scales a basis of $V$ by $\varpi^{\alpha}$ and hence $\mf{f}^r_E$ scales by $r\alpha = \sum\limits_{\sigma} a_{\sigma}$. To get an object of $R\mc{I}_{E/F, r}$, we then tensor by $R_{E/F}$.

We now move to the case where $b \neq 0$. We let $K/E$ be the splitting field of a degree $r$ totally ramified extension and we will construct an object in $R\mc{I}_{K/F, r}$. We lift the grading $(a_{\sigma}/r)_{\sigma}$ to $K$. Namely we have a tuple $\mu \in (\frac{1}{r} \Z / \Z)^{\Gamma_{K/F}}$ with some multiple $rk$-many copies of each $a_{\sigma}/r$. Then $\Gamma_{K/E}$ permutes the tuple trivially. Observe that now the sum $\alpha$ over all entries of $\mu$ is an integer. We again define $V$ to be an $L_K$ vector space with $\gamma \cdot \mu$-graded piece $s$-dimensional. Define $\tilde{\Phi}$ such that $\mf{f}$ permutes the graded pieces and scales a basis of $\mu$ by $\prod\limits_{\sigma \in \Gamma_{K/F}} \sigma(\varpi_K)^{a_{\sigma}k}$. Then $\mf{f}_E$ scales a basis of $V$ by an appropriate power of $\mf{f}$ applied to $\prod\limits_{\sigma \in \Gamma_{K/F}} \sigma(\varpi_K)^{a_{\sigma}k}$ and $\mf{f}^r_E$ scales by $\prod\limits_{\sigma \in \Gamma_{K/F}} \sigma(\varpi_K)^{a_{\sigma}rk}$. Finally, we tensor with $R_{K/F}$. The case for a general Galois $E/F$ is similar.
\end{ex}

\begin{lem}
    Given $s \in \N$, $E/F$ Galois, and $(a_{i}) \in (\Q/\Z)^{\Gamma_{E/F}}$ with $\sum a_{i} \in \frac{1}{s}\Z/\Z$, the object $X$ constructed in Example \ref{objex} is simple if and only if $\sum a_{i}$ is coprime to $s$.
\end{lem}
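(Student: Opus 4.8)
The plan is to detect simplicity of $X$ by tracking subobjects through the quotient functor $q$ and the transition functors, ultimately reducing to the Dieudonné--Manin classification over $E$. Recall first that the $X$ produced in Example \ref{objex} is the image in $\RigIsoc_F$ of $q(V_0)$ for an object $V_0 = \bigoplus_g V_g$ of a category $R\mc{I}_{K/F,r}$ (with $K=E$ in the unramified case $b=0$), where $g$ runs over a single $W_{L_K/F}$-orbit of gradings, each $V_g$ is $s$-dimensional over $L_K$, and $\mf{f}_E^{r}$ acts on a basis of $V_{g_0}$ (the piece with index $(a_\sigma/r)_\sigma$) by $\prod_\sigma \sigma(\varpi_E)^{a_\sigma}$. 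Since this scalar has $E$-valuation $-\sum_\sigma a_\sigma$, restricting the semilinear action to $W_{L_E/F}$ makes $V_{g_0}$ an isoclinic isocrystal over $E$ of rank $s$ and slope $\equiv -\sum_i a_i \pmod{\Z}$; writing $\sum_i a_i = c/s$, the slope has exact denominator $s/\gcd(c,s)$, so by Dieudonné--Manin $V_{g_0}\cong N^{\oplus \gcd(c,s)}$ with $N$ simple of rank $s/\gcd(c,s)$.

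I would then argue that $X$ is simple in $\RigIsoc_F$ if and only if $V_{g_0}$ is simple as an $E$-isocrystal, in three steps. First, morphisms in $\mc{I}_{K/F,r}$ preserve the grading, and $W_{L_K/F}$ permutes the $V_g$ transitively with $\mf{f}_K$ acting inside each piece; hence a subobject of $V_0$ in $\mc{I}_{K/F,r}$ is the same datum as an $\mf{f}_K$-stable $L_K$-subspace of $V_{g_0}$ propagated around the orbit, and (by the same valuation bookkeeping) this matches the notion over $E$. Second, since $V_0\otimes_{L_K} R_{K/F}$ is a free graded $R_{K/F}$-module, every graded $W_{L_K/F}$-stable $R_{K/F}$-submodule has the form $U_0\otimes_{L_K} R_{K/F}$ for a subobject $U_0\subseteq V_0$, so subobjects of $q(V_0)$ correspond bijectively to subobjects of $V_0$. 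Third, the transition functors $R\mc{I}_{K/F,r}\to\RigIsoc_F$ are fully faithful (they are inflation along the tower of gerbes), so $X$ is simple in $\RigIsoc_F$ exactly when $q(V_0)$ is simple; combined with semisimplicity of $\RigIsoc_F$ this is the statement that $q(V_0)$ has no proper nonzero subobject.

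Putting these together: if $\gcd(c,s)>1$ one chooses an $\mf{f}_E$-stable decomposition $V_{g_0}=\bigoplus_{j=1}^{\gcd(c,s)} N_j$ with each $N_j\cong N$, propagates each $N_j$ around the orbit to get $W_0^{(j)}\subseteq V_0$ with $V_0=\bigoplus_j W_0^{(j)}$, and concludes $X\cong Y^{\oplus \gcd(c,s)}$ with $Y$ the image of $q(W_0^{(1)})$; since $Y\neq 0$ this is a nontrivial direct sum, so $X$ is not simple. If instead $\gcd(c,s)=1$ then $V_{g_0}$ is a simple $E$-isocrystal, so $V_0$ and hence $q(V_0)$ have no proper nonzero subobject, so $X$ is simple. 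The main obstacle I expect is making the reductions precise for the \emph{ramified} variants of Example \ref{objex} (and for the case where $(a_i)$ is inflated from a proper subextension $E'\subsetneq E$): one must check that the object of $\RigIsoc_F$ depends only on the basic class in $\Res_{E/F}\GL_s$ with invariant $(a_i)$, which is where full faithfulness of the various transition functors enters, and that the slope/valuation computation survives passing between $K$, $E$, and $E'$. A robust alternative to the whole argument is to compute $\End_{\RigIsoc_F}(X)$ directly: identifying $X$ with $\mathrm{std}\circ z_X$ for the associated basic cocycle $z_X\colon\mc{E}_{\Kal}\to\Res_{E/F}\GL_s$, one gets $\End_{\RigIsoc_F}(X)\cong M_{s/\gcd(c,s)}(D)$ for the central division algebra $D/E$ attached to $\sum_i a_i$, which is a division ring (equivalently $X$ is simple, $\RigIsoc_F$ being semisimple) if and only if $\gcd(c,s)=1$, i.e.\ $\sum_i a_i$ is coprime to $s$.
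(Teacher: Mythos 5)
Your proof is correct and takes essentially the same route as the paper: both exhibit $X\cong Y^{\oplus\gcd(c,s)}$ when $\gcd(c,s)>1$ (the paper by rerunning Example~\ref{objex} with $s/\gcd(c,s)$ in place of $s$, you via Dieudonn\'{e}--Manin applied to the graded piece $V_{g_0}$), and both derive simplicity of $X$ from simplicity of the graded pieces as $E$-isocrystals when $\gcd(c,s)=1$. You usefully spell out the intermediate reductions that the paper compresses into ``a fortiori,'' and your endomorphism-algebra alternative is sound apart from a transposition: it should be $\End_{\RigIsoc_F}(X)\cong M_{\gcd(c,s)}(D)$ with $D$ the division algebra over $E$ of index $s/\gcd(c,s)$, not $M_{s/\gcd(c,s)}(D)$.
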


\begin{proof}
    If $c$ is the greatest common divisor of $\sum a_{i}$ and $s$ then $\sum a_{i} \in \frac{1}{s/c}\Z/\Z$, and so Example \ref{objex} also yields an object $Y$ with $z_{Y}$ factoring through $\Res_{E/F}(\GL_{s/c})$, and each $\gamma \cdot (a_{\sigma})$-graded summand of $X$ is the $c$-fold sum of the $\gamma \cdot (a_{\sigma})$-graded summand of $Y$, showing that $X$ cannot be simple if $c>1$. Conversely, if $\sum a_{\sigma}$ is coprime to $s$ then each $\gamma \cdot (a_{\sigma})$-graded summand of $X$ is simple as an $E$-isocrystal, and is a fortiori simple in $\RigIsoc_{F}$.
\end{proof}

    Continuing to take $G = \GL_{n}$, for fixed $[z_{X}] \in H^{1}_{\text{alg}}(\mc{E}_{\Kal}, G)$ we proved that one can find a representative $z_{X}$ such that $\nu_{X}$ is defined over $F$ and $Z_{G}(\nu_{X}) = M \xrightarrow{\sim} \Res_{E/F}(\GL_{s})$ with $E/F$ finite Galois and $X$ simple. Let us see how unique such an $M$ arising from $[z_{X}]$ is. The above work shows that the representative $z_{X}$ with $F$-rational $\nu_{X}$  is unique up to translating by a $g$-coboundary with $g \in G(\ov{F})$ such that $dg \in Z^{1}(F, M_{\ad})$, which replaces $M$ by $\Int(g)(M)$ (we call any $M'$ which is conjugate to $M$ by such an element $g$ a stable conjugate of $M$). Since we insist that $M \cong \Res_{E/F}(\GL_{s})$---a quasi-split group---and any other such $M' \cong \Res_{E'/F}(\GL_{s'})$ is an inner form of $M$ via the twisting isomorphism $\Int(g)$---two such $M$ and $M'$ are conjugate by such a $g$ if and only if $s = s'$ and $E = E'$ (see Remark \ref{Galrem} for more details). In particular, $M$ is completely determined, and thus $z_{X}$ is as well up to translating by $g$-coboundaries for $g \in N_{G}(M)(\ov{F})$ mapping into $[N_{G}(M)/M](F)$. 

  \begin{rem}\label{Galrem}  If $E/F$ and $E'/F$ are finite and separable extensions then the twisted Levi subgroups $\Res_{E/F}(\GL_{s}) \xrightarrow{\sim} M$ and $\Res_{E'/F}(\GL_{s'}) \xrightarrow{\sim} M'$ of $\GL_{n}$ are stably conjugate if and only if $s = s'$, $E$, $E'$ are isomorphic as extensions of $F$, which implies they have the same Galois closure $\widetilde{E}$. The proof of \cite[Lemma 4.3.4]{FintzenSchwein} shows that $M$ and $M'$ are constructed by twisting a Levi subgroup $M_{0}$ of $G$ by two cocycles $z_{M}$ and $z_{M'}$ representing a fixed class in $H^{1}(F,W_{0})$, where $W_{0} := N_{G}(M_{0})/M_{0}$. Since $M \xrightarrow{\sim} \Res_{E/F}(\GL_{s})$, the cocycle $z_{M}$ is the homomorphism $\Gamma_{\ov{F}/F} \to S_{[E \colon F]}$ induced by the action of $\Gamma_{\ov{F}/F}$ on $\Hom_{F}(E,\ov{F})$---in particular, it has kernel $\Gamma_{\ov{F}/\widetilde{E}}$, similarly for $z_{M'}$. Since $z_{M}$ is cohomologous to $z_{M'}$, these two homomorphisms are conjugate, meaning that the $\Gamma_{\ov{F}}$-sets $\Hom(E, \ov{F})$ and $\Hom(E', \ov{F})$ are isomorphic. It follows from Grothendieck's Galois theory that $E$ and $E'$ are isomorphic as field extensions as desired. 
  \end{rem}

    For $E/F$ Galois, after making the identification 
    \begin{equation*}
        [N_{G}(M)/M](F) \xrightarrow{\sim} \Gamma_{E/F},
    \end{equation*}
    the observations in the paragraph preceding Remark \ref{Galrem} are reflected by the fact that, for the tuple $(a_{i}) \in (\Q/\Z)^{\Gamma_{E/F}}$ with $\sum a_{i} \in \frac{1}{s}\Z/\Z$ corresponding to $X$, we obtain an isomorphic object if $(a_{i})$ is replaced by any of its $\Gamma_{E/F}$-translates. In fact, another such tuple $(b_{i})$ gives an object isomorphic to $X$ in $\RigIsoc_{F}$ if and only if they are in the same $\Gamma_{E/F}$-orbit. We conclude that:
    
    \begin{thm} \begin{enumerate} 
    \item{For $E/F$ Galois, the isomorphism classes of simple objects $X$ in $\RigIsoc_{F}$ with $Z_{G}(\nu_{X}) \xrightarrow{\sim} \Res_{E/F}(\GL_{s})$ are in bijection with $\Gamma_{E/F}$-orbits of tuples $(a_{i}) \in (\Q/\Z)^{\Gamma_{E/F}}$ with $\sum a_{i} \in \frac{1}{s}\Z/\Z$, $(\sum a_{i},s) =1$ which are not fixed by any subgroup $H \subseteq \Gamma_{E/F}$ with $H \neq \{1\}$.}
    \item{More generally, for $E/F$ separable with Galois closure $\widetilde{E}$, the isomorphism classes of simple objects $X$ in $\RigIsoc_{F}$ with $Z_{G}(\nu_{X}) \xrightarrow{\sim} \Res_{E'/F}(\GL_{s})$ for $E'$ the image of an $F$-linear embedding of $E$ in $\ov{F}$ are in bijection with $\Gamma_{\widetilde{E}/F}$-orbits of pairs $(E', (a_{i}))$ consisting of such a field $E'$ and a tuple $(a_{i}) \in (\Q/\Z)^{\Hom_{F}(E',\widetilde{E})}$ with $\sum a_{i} \in \frac{ 1}{s }\Z/\Z$, $(\sum a_{i},s) = 1$ which are not induced by an element of $(\Q/\Z)^{\Hom_{F}(K,\widetilde{E})}$ for $F/K/E'$ a proper subextension.}
    \end{enumerate}
    In particular, (2) gives a complete classification of the simple objects of $\RigIsoc_{F}$.
    \end{thm}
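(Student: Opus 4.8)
The plan is to assemble the pieces already in place---semisimplicity, the reduction to twisted Levi subgroups in Lemma~\ref{lem:simple}, the explicit construction of Example~\ref{objex}, and the uniqueness analysis culminating in Remark~\ref{Galrem}---into a single bijection. The argument is largely organizational; the one genuinely delicate point is the translation between the group-theoretic data and the combinatorial conditions in the statement.

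First I would reduce to listing simple objects of a fixed ``type''. By the semisimplicity lemma it suffices to classify simple $X$. Given such an $X$, Lemma~\ref{lemma:reg} lets us choose $z_X$ (via \eqref{gerbeequiv}) with $\nu_X$ defined over $F$, and then Lemma~\ref{lem:simple} shows $z_X$ is basic in the minimal twisted Levi $M = Z_G(\nu_X)$, which is elliptic; by Lemma~\ref{lem: GLntwistedlevis} this means $M \xrightarrow{\sim} \Res_{E/F}(\GL_s)$ with $n = [E:F]s$. The discussion preceding Remark~\ref{Galrem}, together with Remark~\ref{Galrem} itself, shows that the stable conjugacy class of $M$---equivalently the isomorphism class of the pair $(E,s)$, and in the merely separable case the $\Gamma_{\widetilde E/F}$-orbit of the embedded field $E'$---is determined by $[z_X]$. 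So I would fix $(E,s)$, enumerate the simple $X$ with $Z_G(\nu_X)\cong\Res_{E/F}(\GL_s)$, and then divide out by the residual symmetry.

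For fixed $M \xrightarrow{\sim} \Res_{E/F}(\GL_s)$ I would invoke the computation following \eqref{TNM}: $H^1_{\bas}(\mc{E}_{\Kal},M)$ embeds into $\Hom(u,Z(M))^{\Gamma_{\ov F/F}}\cong (\Q/\Z)^{\Hom_F(E,\ov F)}$ with image the tuples $(a_i)$ satisfying $\sum a_i \in \frac{1}{s}\Z/\Z$, and since the embedding is injective a basic class is pinned down by its grading. Example~\ref{objex} produces, for each admissible pair $(E,(a_i))$, an explicit object of $\RigIsoc_F$ realizing that grading, and the lemma immediately following Example~\ref{objex} identifies exactly when it is simple, namely when $(\sum a_i,s)=1$; when this fails the object splits as $Y^{\oplus c}$ and $z_X$ lands in a non-elliptic twisted Levi, contradicting simplicity. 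The further requirement that $E$ be genuinely the minimal field for $z_X$---i.e.\ that $Z_G(\nu_X)$ be exactly $\Res_{E/F}(\GL_s)$---is enforced by the minimality of $M$ in Lemma~\ref{lem:simple}: examining the level sets of $i\mapsto a_i$, whose coarsening to a $\Gamma$-stable partition (forced by $F$-rationality of $\nu_X$) is precisely what enlarges the centralizer to some $\Res_{K/F}(\GL_\bullet)$ with $F\subsetneq K\subsetneq E$, translates this into the combinatorial condition in the statement: not fixed by any nontrivial subgroup of $\Gamma_{E/F}$ (Galois case), respectively not inflated from a proper subextension $F/K/E$ (general case), these phrasings being interchanged by the reversal $\sigma\mapsto\sigma^{-1}$. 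Imposing it guarantees each simple object is recorded under exactly one field.

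Finally I would handle the residual symmetry. The paragraph before Remark~\ref{Galrem} shows that, with $M$ fixed, $z_X$ is unique up to translation by a $g$-coboundary for $g\in N_G(M)(\ov F)$ mapping into $[N_G(M)/M](F)$; identifying $[N_G(M)/M](F)\cong\Gamma_{E/F}$ in the Galois case, this acts on $(a_i)\in(\Q/\Z)^{\Gamma_{E/F}}$ by translation, so two admissible tuples yield isomorphic objects precisely when they lie in one $\Gamma_{E/F}$-orbit---the ``if'' being the coboundary isomorphism, the ``only if'' following because isomorphic objects have cohomologous cocycles, the connecting coboundary must (after the above normalizations) be of this form, and distinct $\Gamma_{E/F}$-orbits land in distinct points of $H^1_{\bas}(\mc{E}_{\Kal},M)\hookrightarrow(\Q/\Z)^{\Gamma_{E/F}}$. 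This gives part~(1). Part~(2) is the same argument with $\Gamma_{\widetilde E/F}$ acting simultaneously on the set of embedded fields $E'$ (via Remark~\ref{Galrem}) and, for each, on the tuples in $(\Q/\Z)^{\Hom_F(E',\widetilde E)}$; combined with Lemma~\ref{lem:simple}, which ensures every simple object arises this way, this yields the complete classification. I expect the main obstacle to be exactly the last point of Paragraph~3: verifying cleanly that the combinatorial minimality/inflation condition corresponds on the nose to $Z_G(\nu_X)\cong\Res_{E/F}(\GL_s)$ (using the $F$-rationality of $\nu_X$), and pinning down $[N_G(M)/M](F)$ together with its action on $(a_i)$, in particular organizing the non-Galois case so that one quotients by the $\Gamma_{\widetilde E/F}$-action on fields and tuples at once without overcounting.
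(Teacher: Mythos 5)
Your proposal assembles the same ingredients in the same order that the paper does: semisimplicity, Lemmas~\ref{lemma:reg} and~\ref{lem:simple}, the computation of~\eqref{TNM}, the simplicity criterion from Example~\ref{objex}, the translation of the minimality of $M = Z_G(\nu_X)$ into the ``not fixed by a nontrivial subgroup / not inflated'' condition, and the residual $\Gamma_{E/F}$-action via $[N_G(M)/M](F)$. The only substantive difference is that for the ``only if'' direction of the orbit statement you lean on the informal discussion preceding Remark~\ref{Galrem}, whereas the paper backs this with a citation to \cite[Proposition~3.9]{DS24}; your sketch of that step (coboundary lies in $N_G(M)(\ov F)$ and projects into $[N_G(M)/M](F)$ after the normalizations) is the correct argument, but it is worth making it as precise as that reference does rather than treating it as already established.
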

\begin{proof} We give the proof of (1)---the proof of (2) is similar after applying Remark \ref{Galrem}.

The description of all simple objects whose isomorphism classes are in the image of $H^{1}(\mc{E}_{\Kal}, M)$ and have Newton centralizer exactly $M$ as the above-described tuples in $(\Q/\Z)^{\Gamma_{E/F}}$ is obtained by combining the explicit computation of the map \eqref{TNM} given above with the observation that if $H \subseteq \Gamma_{E/F}$ fixes $(a_{i})$ then the corresponding simple object $X$ has $\Res_{E^{H}/F}(\GL_{s \cdot |H|}) \subseteq Z_{G}(\nu_{X})$. The fact that two such elements have the same image in $H^{1}(\mc{E}_{\Kal}, G)$ if and only if they are in the same $\Gamma_{E/F}$-orbit follows from the proof of \cite[Proposition 3.9]{DS24}.
\end{proof}

\printbibliography

\end{document}